\documentclass[11pt]{amsart}
\usepackage{amsmath,amssymb,amsthm,graphicx}
\usepackage[usenames]{color}
\usepackage[shortlabels]{enumitem}
\usepackage{hyperref}
\usepackage{xparse}
\markboth
\usepackage{float}
\usepackage{morefloats}
\usepackage{lipsum}
\usepackage{mathtools}
\usepackage{xcolor}

\newtheorem{theorem}{Theorem}[section]
\newtheorem*{theorem-non}{Theorem}
\newtheorem{prop}[theorem]{Proposition}
\newtheorem{lemma}[theorem]{Lemma}
\newtheorem{remark}[theorem]{Remark}

\newtheorem{definition}[theorem]{Definition}
\newtheorem{cor}[theorem]{Corollary}
\newtheorem{example}[theorem]{Example}

\newtheorem{case}{Case}

\numberwithin{subcase}{case}

\newcommand{\CPb}{\overline{\mathbb{CP}}^{2}}
\newcommand{\CP}{{\mathbb{CP}}^{2}}

\parskip.05in

\def \CPb {\overline{\mathbb{CP}}^{2}}
\def \CP {{\mathbb{CP}}^{2}}

\def \s {\sigma}

\def \bd {\partial}

\def \- {\setminus}

\title[Deformation of Singular Fibers and Small Exotic Symplectic $4$-Manifolds]{Deformation of Singular Fibers of Genus $2$ Fibrations and Small Exotic Symplectic $4$-Manifolds}

\begin{document}

\author{Anar Akhmedov}
\address{School of Mathematics,
University of Minnesota,
Minneapolis, MN, 55455, USA}
\email{akhmedov@math.umn.edu}

\author{S\"{u}meyra Sakalli}
\address{Max Planck Institute for Mathematics,
Vivatsgasse 7, 53111 Bonn
Germany}
\email{sakal008@math.umn.edu}

\begin{abstract} We introduce the $2$-nodal spherical deformation of certain singular fibers of genus $2$ fibrations, and use such deformations to construct various examples of simply connected minimal symplectic $4$-manifolds with small topology. More specifically, we construct new exotic minimal symplectic $4$-manifolds homeomorphic but not diffeomorphic to $\CP\#6\CPb$, $\CP\#7\CPb$, and $3\CP\#k\CPb$ for $k=16, 17, 18, 19$ using combinations of such deformations, symplectic blowups, and (generalized) rational blowdown surgery. We also discuss generalizing our constructions to higher genus fibrations using $g$-nodal spherical deformations of certain singular fibers of genus $g \geq 3$ fibrations. \end{abstract}

\subjclass{Primary 57R55; Secondary 57R57, 57M05}

\keywords{symplectic 4-manifold, mapping class group, Lefschetz fibration, genus two fibration, , rational blowdown}

\maketitle

\section{Introduction} 

There has been lots of activity in the discovery of exotic smooth structures on simply connected $4$-manifolds with small Euler characteristic in the period of last $15$ years. The following references are among many dealing with this subject \cite{P2, SS1, FS3, PSS, SS2, P3, A1, akhmedov, AP1, ABP, ABBKP, AP2, Mich, Yeung, AkP, KS}. In this article, we will be primarily interested in the construction of exotic smooth structures on simply connected $4$-manifolds with small Euler characteristic using the rational-blowdown surgery \cite{FS1, P1}. To set the stage and motivate our work, let us briefly survey what is already known in this direction. In the 2004, J. Park \cite{P2} constructed the first known exotic smooth structure on $\CP\#7\CPb$ i.e. a smooth $4$-manifold homeomorphic but not diffeomorphic to $\CP\#7\CPb$. Park's manifold in \cite{P2} was constructed from the elliptic surface $E(1)=\CP\#9\CPb$ with a certain elliptic fibration structure by applying oridinary blowups and rational blowdown surgery. Shortly afterwards, using a technique similar to Park's (i.e. the generalized rational blowdown), A. Stipsicz and Z. Szab\'o constructed an exotic smooth structure on  $\CP\#6\CPb$ \cite{SS1}. Then Fintushel and Stern \cite{FS3} introduced a new method, the knot surgery in double nodes, which yielded to infinitely many distinct smooth structures on $\CP\#k\CPb$, for $k= 6,7,8$. Park, Stipsicz, and Szab\'o  \cite{PSS}, using \cite{FS3} and the rational blowdown, constructed infinitely many exotic smooth structures on $\CP\#5\CPb$. In \cite{SS2} and \cite{P3}, A. Stipsicz and Z. Szab\'o and J. Park used similar ideas to construct exotic smooth structures on $3\,\CP\#k\CPb$ for $k = 9$ \cite{SS2} and for $k = 8$ \cite{P3} respectively. All these infinite families of exotic $4$-manifolds were constructed from elliptic surfaces $E(1)$ and $E(2)$, with certain elliptic fibration structures on them, by applying a combination of knot surgery in double nodes, oridinary blowups and rational blowdowns. Similar results starting from the ellptic surface $E(n)$ for $n\geq 3$ were obtained in \cite{A}. See also a related more recent work in \cite{Mich, KS}, which again uses elliptic fibrations on $E(1)$. One of the key ingredients in the above mentioned articles was the use of Kodaira's  classification of singular fibers in elliptic fibrations. Another interesting approach is given in \cite{Yeung}, where various small exotic complex surfaces with $b_2^{+} = 1, 3$ are constructed from fake projective planes.

Motivated by these results, our goal in this paper is to construct exotic smooth and symplectic structures on $\CP\#k\CPb$, for $k= 6,7$ and $3\CP\#k\CPb$ for $k = 16, 17, 18, 19$ starting with certain genus two Lefschetz fibration structures on $\CP\#13\CPb$ and $E(2)\#2\CPb$. We will first introduce the $2$-nodal spherical deformation of certain singular fibers of genus $2$ fibrations, and use such deformations to construct various examples of simply connected minimal symplectic $4$-manifolds with small topology. More specifically, we construct new exotic minimal symplectic $4$-manifolds homeomorphic but not diffeomorphic to $\CP\#6\CPb$, $\CP\#7\CPb$, and $3\CP\#k\CPb$ for $k = 16, 17, 18, 19$ using such spherical deformations, symplectic blowups, and the (generalized) rational blowdown surgery. Our work relies on the geometrical classification of all singular fibers in fibrations of genus two curves by Namikawa and Ueno, and constructions introduced in our paper can be considered a genus two counterpart of the constructions given in \cite{P2, SS1, PSS, SS2, P3, akhmedov}. In addition, we would like to emphasize that our paper is first in applying Namikawa and Ueno's classification in rational blowdown constructions, and using pencils of genus two curves in Hirzebruch surfaces to construct small exotic symplectic $4$-manifolds. 

For the sake of clarity, let us outline here key results and ideas leading to our main theorems. In the article~\cite{NamU}, Namikawa and Ueno gave the complete list of singular fibers for fibrations of complex curves of genus two over the $2$-disk. More recently, in paper \cite{Gong}, Gong, C., Lu, J. and Tan, S.-L. classified fibrations of complex curves of genus two over the $2$-sphere with exactly two singular fibers. There are $11$ such fibrations and the singular fibers are coming from the Namikawa-Ueno's list. That is to say, Gong, et al. determined which two fibrations of Namikawa-Ueno can be glued to obtain a genus two fibration over the $2$-sphere. The paper~\cite{Gong} also provides the defining polynomials for each of these $11$ fibrations and lists the monodromies using the work of M. Ishizaka \cite{Ish2, Ish}. In our paper, we will work with $2$ of the $11$ families of Gong, et.al in \cite{Gong}. Namely, we consider $2$ of these fibrations of complex curves of genus $2$ over the $2$-sphere, each of them having $2$ singular fibers. In our constructions, for each family, we have determined the homology classes of components of one of the two singular fibers. To determine the homology classes in our first fibration (VIII-4, VIII-1), we use Kitagawa's work \cite{Kit1} (see Lemma 3.2 in \cite{Kit1} or Lemma~\ref{lemma1} in this paper). Using Lemma 3.2 in \cite{Kit1}, from fiber of type VIII-4, he constructs the desired pencil by contracting $(-1)$ curves consecutively, which we employ in our proof. Next, we take this pencil and by blow-ups, determine the homology classes of components of the fiber VIII-4. Its complement is VIII-1 by the theorem of Gong, et al. in \cite{Gong}. The monodromy of VIII-1 is also known by Ishizaka’s work in \cite{Ish2, Ish}. By using the monodromy and applying the deformation method to VIII-1, we then start our construction. For other fibration, we use the same strategy: first similar to Kitagawa's method, by contraction of $-1$ spheres, we obtain our pencils, and then use the same strategy as in (VIII-4, VIII-1) case. We have also benefited from the discussions of the papers ``On certain Mordell-Weil lattices of hyperelliptic type on rational surfaces" \cite{Nguen}, where successive contraction of $-1$ spheres was used to determine the homology classes and \cite{Gong2}, where the classification and Mordell-Weil groups of fibrations with two singular fibers are given. The later result determines the number of disjoint sections of fibrations that we consider.

{\bf Acknowledgments:} We would like to thank Ronald Stern for very useful discussions, dating back to first author's Ph.D. studies at University of California, Irvine. Our work is greatly motivated and inspired by some of these discussions. We also would like to thank Cagri Karakurt, Tian-Jun Li, and Sai-Kee Yeung for their interest in our work, and pointing out some references. The authors are also very grateful to the referee for a careful reading of the paper and many valuable comments and suggestions, which helped us to improve our manuscript. We have also greatly benefited from IPE drawing program (\url{http://ipe.otfried.org}) in drawing the various figures in this paper. A. Akhmedov was partially supported by NSF grants DMS-1065955, DMS-1005741, Sloan Research Fellowship, Simons Research Fellowship and Collaboration Grants for Mathematicians by Simons Foundation. S. Sakall{\i} was partially supported by NSF grants DMS-1065955.  

\section{Preliminaries and background} To make our paper self-contained, in this section we review some background materials on the rational blowdown and its generalization \cite{FS1, P1}, Hirzebruch surfaces \cite{H}, classification of the singular fibers of genus two fibration due to Namikawa and Ueno \cite{NamU}, and list a few results that are to be used in the later part of the paper.

\subsection{Rational blow-down and its generalizations}\label{rb} The rational blow-down surgery was introduced in \cite{FS1}. The basic idea of the surgery is that if a smooth 4-manifold $X$ contains a particular configuration $C_{p}$ of transversally intersecting $2$-spheres whose boundary is the lens space $L(p^2,1-p)$, then one can replace $C_{p}$ with the rational homology ball $\mathbb{B}_{p}$ to construct a new manifold $X_{p}$. If one knows the Seiberg-Witten invariants of the original manifold $X$, then one can determine the Seiberg-Witten invariants of $X_{p}$. The rational blow-down surgery technique was generalized in \cite{P1}. Since we will be also using the generalized rational blow-down in our constructions, let us review the generalized rational blow-down below. Let $p\geq q\geq 1$ and $p,q$ be relatively prime integers. Let  $C_{p,q}$ denote the smooth $4$-manifold obtained by plumbing disk bundles over the $2$-sphere according to the following linear diagram

\begin{picture}(100,60)(-90,-25)
 \put(-12,3){\makebox(200,20)[bl]{$-r_{k}$ \hspace{6pt}
                                  $-r_{k-1}$ \hspace{96pt} $-r_{1}$}}
 \put(4,-25){\makebox(200,20)[tl]{$u_{k}$ \hspace{25pt}
                                  $u_{k-1}$ \hspace{86pt} $u_{1}$}}
  \multiput(10,0)(40,0){2}{\line(1,0){40}}
  \multiput(10,0)(40,0){2}{\circle*{3}}
  \multiput(100,0)(5,0){4}{\makebox(0,0){$\cdots$}}
  \put(125,0){\line(1,0){40}}
  \put(165,0){\circle*{3}}
\end{picture}

\noindent where $p^2/(pq-1) = [r_k, r_{k-1}, \cdots, r_1]$ is the unique continued linear fraction with all $r_{i} \geq 2$ and each vertex $u_{i}$ of the linear diagram represents a disk bundle over the 2-sphere with Euler number $-r_{i}$. According to Casson and Harer \cite {CH}, the boundary of $C_{p,q}$ is the lens space $L(p^2, 1-pq)$ which also bounds a rational ball ${\mathbb{B}}_{p,q}$ with $\pi_1({\mathbb{B}}_{p,q})=\mathbb{Z}_p$ and $\pi_1(\bd {\mathbb{B}}_{p,q})\to \pi_1({\mathbb{B}}_{p,q})$ is surjective. If $C_{p,q}$ is embedded in the $4$-manifold $X$ then the generalized rational blowdown manifold $X_{p,q}$  is obtained by replacing $C_{p,q}$ with ${\mathbb{B}}_{p,q}$, i.e, $X_{p,q} = (X\- C_{p,q}) \cup {\mathbb{B}}_{p,q}$. If $X$ and $X\- C_{p,q}$ are simply connected, then so is $X_{p,q}$. The case when $q=1$ is the construction of Fintushel-Stern with $C_p = C_{p,1}$ given by

 \begin{picture}(100,60)(-90,-25)
 \put(-12,3){\makebox(200,20)[bl]{$-(p+2)$ \hspace{6pt}
                                  $-2$ \hspace{96pt} $-2$}}
 \put(4,-25){\makebox(200,20)[tl]{$u_{p-1}$ \hspace{25pt}
                                  $u_{p-2}$ \hspace{86pt} $u_{1}$}}
  \multiput(10,0)(40,0){2}{\line(1,0){40}}
  \multiput(10,0)(40,0){2}{\circle*{3}}
  \multiput(100,0)(5,0){4}{\makebox(0,0){$\cdots$}}
  \put(125,0){\line(1,0){40}}
  \put(165,0){\circle*{3}}
\end{picture}

\smallskip 

\begin{lemma}\label{rbd} Let $X_{p,q}$ be the smooth $4$-manifold obtained from $X$ by a rational blow-down of the configuration $C_{p,q}$. Then ${b_{2}}^{+}(X_{p,q}) = {b_{2}}^{+}(X)$, ${b_{2}}^{-}(X_{p,q}) = {b_{2}}^{-}(X) - k$, $e(X_{p,q}) = e(X) - k$, and ${c_{1}}^{2}(X_{p,q}) = {c_{1}}^{2}(X) + k$.

\end{lemma}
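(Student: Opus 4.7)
The plan is to compute each invariant by exploiting how $X_{p,q}$ is built: excise $C_{p,q}$, which is a negative-definite linear plumbing of $k$ disk bundles over $S^2$, and glue in the rational ball $\mathbb{B}_{p,q}$ along the common boundary $L(p^2,1-pq)$. Two ingredients drive everything: the Euler characteristic of a 3-manifold is zero (so Euler characteristics add under gluing), and Novikov additivity for signatures under gluing along a closed 3-manifold.

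First I would compute the invariants of the pieces. The configuration $C_{p,q}$ deformation retracts onto a chain of $k$ two-spheres meeting transversally in $k-1$ points, so $e(C_{p,q}) = 2k-(k-1) = k+1$; its intersection form is the negative-definite $k\times k$ matrix determined by the weights $-r_i$, hence $b_2(C_{p,q})=k$, $b_2^+(C_{p,q})=0$, and $\sigma(C_{p,q}) = -k$. For the rational ball, $H_\ast(\mathbb{B}_{p,q};\Q)\cong H_\ast(\text{pt};\Q)$, so $e(\mathbb{B}_{p,q}) = 1$ and $\sigma(\mathbb{B}_{p,q}) = 0$.

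Next I would assemble the global invariants. Writing $X = (X\setminus C_{p,q})\cup C_{p,q}$ and $X_{p,q} = (X\setminus C_{p,q})\cup \mathbb{B}_{p,q}$, additivity of $e$ (with correction term $e(L(p^2,1-pq))=0$) yields
\[
e(X_{p,q}) = e(X\setminus C_{p,q}) + e(\mathbb{B}_{p,q}) = \bigl(e(X) - (k+1)\bigr) + 1 = e(X) - k,
\]
and Novikov additivity gives
\[
\sigma(X_{p,q}) = \sigma(X\setminus C_{p,q}) + \sigma(\mathbb{B}_{p,q}) = \bigl(\sigma(X) - (-k)\bigr) + 0 = \sigma(X) + k.
\]
A Mayer--Vietoris argument, using $b_2(\mathbb{B}_{p,q})=0$, $b_1(L(p^2,1-pq))=0$, and that $H_2(C_{p,q})\to H_2(X)$ is injective (because its image pairs non-degenerately under the intersection form), shows $b_2(X_{p,q}) = b_2(X) - k$. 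Combining this with $\sigma(X_{p,q})-\sigma(X) = k$ and solving the linear system yields $b_2^+(X_{p,q}) = b_2^+(X)$ and $b_2^-(X_{p,q}) = b_2^-(X) - k$.

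Finally, the $c_1^2$ identity is automatic from the preceding two: for any closed oriented smooth $4$-manifold admitting an almost complex structure, $c_1^2 = 2e + 3\sigma$, so
\[
c_1^2(X_{p,q}) = 2e(X_{p,q}) + 3\sigma(X_{p,q}) = 2e(X) + 3\sigma(X) + k = c_1^2(X) + k.
\]
The only subtle point is the Mayer--Vietoris step for $b_2$; it is the main place one must verify that the positive part of the intersection form is preserved. Everything else is a direct bookkeeping of Euler characteristic and signature contributions from $C_{p,q}$ (negative definite, $k$ generators) versus $\mathbb{B}_{p,q}$ (rationally acyclic).
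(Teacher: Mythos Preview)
Your proof is correct and follows essentially the same approach as the paper's: both rely on $C_{p,q}$ being a negative-definite plumbing of length $k$, $\mathbb{B}_{p,q}$ being a rational homology ball, and the identity $c_1^2 = 2e + 3\sigma$. The paper's proof is far terser---it simply asserts the $b_2^{\pm}$ statements from negative-definiteness and then computes $c_1^2$---whereas you spell out the Euler characteristic and Novikov additivity computations and justify the $b_2$ count via Mayer--Vietoris; this extra detail is sound and arguably more informative, but the underlying argument is the same.
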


\begin{proof} 

Since $C_{p,q}$ is a negative definite plumbing of length $k$, we have ${b_{2}}^{+}(X_{p,q}) = {b_{2}}^{+}(X)$, ${b_{2}}^{-}(X_{p,q}) = {b_{2}}^{-}(X) - k$, and consequently $e(X_{p,q}) = e(X) - k$. Using the formula ${c_1}^{2} := 3\s +2e$, we compute ${c_{1}}^{2}(X_{p,q}) = 3\s(X_{p,q}) + 2e(X_{p,q}) = 3(\s(X)+k) + 2(e(X)-k) = {c_{1}}^{2}(X) + k$. 

\end{proof}

The following theorem gives a way to compute the Seiberg-Witten invariants of $X_{p,q}$ using the Seiberg-Witten invariants of $X$.

\begin{theorem} \cite {P1}. Suppose $X$ is a smooth 4-manifold with $b_{2}^{+}(X) > 1$ which contains a configuration $C_{p,q}$. If $L$ is a characteristic line bundle on $X$ such that, $SW_{X}(L) \ne 0$, $(L|_{C_{p,q}})^{2} = - b_{2}(C_{p,q})$ and $c_{1}(L|_{L_(p^2,1-pq)}) = mp \in {\mathbb{Z}}_{p^{2}} \cong H^{2}(L(p^2, 1-pq); \mathbb{Z})$ with $m \equiv (p-1) \mod 2$, then $L$ induces a SW basic class $\bar L$ of $X_{p,q}$ such that $SW_{X_{p,q}}(\bar L) = SW_{X}(L)$. \end{theorem}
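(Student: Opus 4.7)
The plan is to adapt the neck-stretching / gluing strategy of Fintushel--Stern for the ordinary rational blowdown to the generalized setting $C_{p,q} \leadsto \mathbb{B}_{p,q}$. Decompose
\[
X \;=\; Z \cup_Y C_{p,q}, \qquad X_{p,q} \;=\; Z \cup_Y \mathbb{B}_{p,q},
\]
where $Z = X \setminus \mathrm{int}(C_{p,q})$ and $Y = L(p^2, 1-pq)$. A long neck $Y \times [-T,T]$ is inserted, and solutions to the Seiberg--Witten equations on $X$ are to be cut into solutions on $Z$ and on $C_{p,q}$ that match along $Y$; similarly on the $X_{p,q}$ side one glues along $Y$ pieces on $Z$ and on $\mathbb{B}_{p,q}$.

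The first step is the book-keeping on spin$^c$ structures. Using $H^2(Y;\Z) \cong \Z_{p^2}$ together with the Mayer--Vietoris sequence for $X = Z \cup_Y C_{p,q}$, I would identify the spin$^c$ structures on $Y$ that admit extensions over $C_{p,q}$ and, separately, over $\mathbb{B}_{p,q}$. A parity/divisibility analysis shows that the image of $H^2(\mathbb{B}_{p,q};\Z) \to H^2(Y;\Z)$ is exactly the subgroup generated by $p \in \Z_{p^2}$, and a characteristic spin$^c$ structure extends over $\mathbb{B}_{p,q}$ precisely when its restriction satisfies $c_1|_Y = m p$ with $m \equiv p-1 \pmod 2$. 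This is exactly the hypothesis on $L$, and it produces a well-defined spin$^c$ structure $\bar L$ on $X_{p,q}$ (up to the $\Z_p$-torsion in $H^2(\mathbb{B}_{p,q})$, which is absorbed by the ambiguity in the labelling).

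The second step is to show that the $C_{p,q}$-side and the $\mathbb{B}_{p,q}$-side each contribute a single, trivial factor to the gluing. On $\mathbb{B}_{p,q}$ one uses that $b_2(\mathbb{B}_{p,q})=0$ and that $Y$ admits a metric of positive scalar curvature, together with the finite-energy bound, to conclude the SW moduli space is a point. On $C_{p,q}$, the square condition $(L|_{C_{p,q}})^2 = -b_2(C_{p,q})$ puts $L|_{C_{p,q}}$ at the expected (zero-dimensional) level; since $C_{p,q}$ is a negative-definite plumbing, the reducible/relative moduli space is again a single transverse point, so the contribution is $\pm 1$.

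Combining these pieces via the standard gluing theorem in SW theory, the relative invariant of $Z$ appears as the common factor in both $SW_X(L)$ and $SW_{X_{p,q}}(\bar L)$, yielding the equality $SW_{X_{p,q}}(\bar L) = SW_X(L)$. The main obstacle is the gluing step across the lens-space neck: one must verify transversality, orient the moduli spaces compatibly, and show that only the one spin$^c$ structure singled out by the hypotheses on $L|_Y$ and $L|_{C_{p,q}}$ produces a finite-energy solution on the cap, so that no extra terms appear in the gluing sum. Once this is carried out, following the approach of \cite{FS1} and generalized in \cite{P1}, the identity is immediate.
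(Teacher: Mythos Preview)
This theorem is not proved in the paper; it is merely quoted from \cite{P1} as background material, with no argument given beyond the citation. Consequently there is nothing in the paper's own text to compare your proposal against.

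That said, your sketch is a reasonable outline of the strategy actually used in \cite{P1}, which in turn follows the neck-stretching and gluing template of Fintushel--Stern \cite{FS1}: decompose along the lens space, identify which spin$^c$ structures extend over the rational ball (this is where the condition $c_1(L|_Y)=mp$ with $m\equiv p-1\pmod 2$ enters), and show that both $C_{p,q}$ and $\mathbb{B}_{p,q}$ contribute trivially so that the relative invariant of the complement $Z$ carries the entire Seiberg--Witten invariant. The hypothesis $(L|_{C_{p,q}})^2=-b_2(C_{p,q})$ is precisely what forces the dimension count to come out right on the $C_{p,q}$ side. Your description of the ``main obstacle'' (transversality, orientation compatibility, and ruling out contributions from other spin$^c$ structures on the cap) correctly identifies where the technical work lies in \cite{P1}, but as written your proposal is a plan rather than a proof: each of those gluing steps requires substantial analysis that you have only named, not carried out.
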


\medskip

\begin{cor} \cite {P1}. Suppose $X$ is a smooth 4-manifold with $b_{2}^{+}(X) > 1$ which contains a configuration $C_{p,q}$. If $L$ is a SW basic class of $X$ satisfying $L\cdot u_{i} = (r_{i} - 2)$ for any i with $1 \leq i \leq k$ (or  $L\cdot u_{i} = -(r_{i} - 2)$, then $L$ induces a SW basic class $\bar L$ of $X_{p,q}$ such that $SW_{X_{p,q}}(\bar L) = SW_{X}(L)$.  

\end{cor}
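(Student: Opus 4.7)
My plan is to deduce the corollary directly from the preceding theorem by verifying that the combinatorial hypothesis $L\cdot u_i = \pm(r_i-2)$ implies both numerical hypotheses of that theorem: $(L|_{C_{p,q}})^{2} = -b_{2}(C_{p,q}) = -k$, and $c_1(L|_{L(p^2,1-pq)}) = mp \in \Z_{p^{2}}$ with $m\equiv p-1 \pmod 2$. The two sign choices $\pm(r_i-2)$ are interchangeable because $L$ and $-L$ have the same absolute Seiberg--Witten invariant, so I will focus on the case $L\cdot u_i = r_i-2$.

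First I would compute $(L|_{C_{p,q}})^{2}$. Since the plumbing matrix $Q$ of $C_{p,q}$ is negative definite, hence invertible over $\Q$, the Poincar\'e dual of $L|_{C_{p,q}}$ in $H_2(C_{p,q};\Q)$ is uniquely written as a rational combination $\sum c_i u_i$, where the column vector $c$ satisfies $Q c = v$ with $v = (r_1-2,\ldots,r_k-2)^{T}$. Consequently $(L|_{C_{p,q}})^{2} = c^{T} Q c = v^{T} Q^{-1} v$, and it remains to check that $v^{T} Q^{-1} v = -k$. This is an arithmetic identity about the linear plumbing whose continued fraction is $p^{2}/(pq-1) = [r_k,\ldots,r_1]$; I would prove it by induction on the length $k$, the inductive step contracting the terminal vertex via the recursion $p^{2}/(pq-1) = r_k - 1/[r_{k-1},\ldots,r_1]$. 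A direct verification in the one-vertex base case (e.g., the Fintushel--Stern $C_{p,1}$ configuration with terminal $(-(p+2))$-sphere) anchors the induction, and a small sanity check for $C_{3,1}$ with $v = (0,3)^{T}$ gives $v^{T} Q^{-1} v = -2 = -k$.

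Second I would verify the boundary restriction. The long exact sequence of the pair $(C_{p,q},\partial C_{p,q})$ identifies the restriction map $H^{2}(C_{p,q};\Z) \to H^{2}(L(p^{2},1-pq);\Z) \cong \Z_{p^{2}}$ with the quotient by the image of $Q$, whose cokernel has order $|\det Q| = p^{2}$. Because the characteristic vector $v = (r_1-2,\ldots,r_k-2)$ corresponds to a class that extends over the rational ball $\mathbb{B}_{p,q}$, and $\pi_1(\mathbb{B}_{p,q}) = \Z_p$ surjects onto $\pi_1(\partial C_{p,q}) = \Z_{p^{2}}$, the restriction lands in the index-$p$ subgroup $p\Z_{p^{2}}$, giving $c_1(L|_{\partial}) = mp$ for some integer $m$. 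The parity $m \equiv p-1 \pmod 2$ is then the standard characteristic-class parity analysis in \cite{FS1, CH}, which propagates through the continued-fraction contraction used in step one.

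Having verified both hypotheses, the preceding theorem applies verbatim and yields the SW basic class $\bar L$ of $X_{p,q}$ with $SW_{X_{p,q}}(\bar L) = SW_{X}(L)$. The main obstacle is the arithmetic identity $v^{T} Q^{-1} v = -k$ together with the companion parity statement; both are elementary but require careful bookkeeping of the continued-fraction recursion for $p^{2}/(pq-1)$, and once the conventions on orientation and vertex ordering are fixed they follow by a clean induction on $k$.
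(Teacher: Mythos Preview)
The paper does not prove this corollary at all; it is quoted from \cite{P1} without proof. Your strategy---deduce the corollary from the preceding theorem by checking its two numerical hypotheses---is exactly the approach taken in \cite{P1}, so in spirit you are on the right track.

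Your first step is fine: identifying $L|_{C_{p,q}}$ with the canonical covector $v=(r_i-2)$ and reducing $(L|_{C_{p,q}})^2=-k$ to the arithmetic identity $v^{T}Q^{-1}v=-k$ is correct, and this identity is indeed a standard feature of the class-T (Wahl) singularities resolved by $C_{p,q}$; your sanity check for $C_{3,1}$ is accurate.

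Your second step, however, has a logical slip and a gap. You write that ``$\pi_1(\mathbb{B}_{p,q})=\Z_p$ surjects onto $\pi_1(\partial C_{p,q})=\Z_{p^2}$''---the surjection runs the other way, from $\pi_1(\partial \mathbb{B}_{p,q})\cong\Z_{p^2}$ onto $\pi_1(\mathbb{B}_{p,q})\cong\Z_p$. More seriously, your argument that the restriction lands in $p\Z_{p^2}$ assumes that the class corresponding to $v$ extends over $\mathbb{B}_{p,q}$; but that extendability is essentially the conclusion you are after, not an input. The clean argument computes the image of $v$ in $\mathrm{coker}\,Q\cong\Z_{p^2}$ directly via the continued-fraction recursion (the same recursion you invoke for step one), obtaining $mp$ with the correct parity without reference to $\mathbb{B}_{p,q}$. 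Alternatively you may cite as a known fact (e.g.\ from \cite{KSB}) that the $C_{p,q}$ are precisely the resolutions of quotient singularities admitting $\Q$-Gorenstein smoothings, whence the canonical class extends; but this should be stated as an external input, not bootstrapped.
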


The algebraic geometric interpretation of the rational blowdown procedure can be found in \cite{KSB}, which is very useful in the construction of exotic complex surfaces. It seems quite promising to prove some of the exotic manifolds constructed in our paper are complex algebraic surfaces of general type. We plan to investigate this in the future work. 

The following result by T-J. Li and A-K. Liu \cite{LL} will also be needed in the sequel.

\begin{theorem} \cite {LL}
There is a unique symplectic structure on $\CP \# k \CPb$ for $2 \leq k \leq 9$ up to diffeomorphisms and deformation. For $k \leq 10$, the symplectic structure is still unique for the standard canonical class.  
\end{theorem}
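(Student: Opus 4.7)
The plan is to follow the strategy pioneered by Li and Liu using Taubes' identification SW $=$ Gr together with the action of the diffeomorphism group on cohomology. For any symplectic form $\omega$ on $X = \CP\#k\CPb$ define the symplectic canonical class $K_\omega := -c_1(X,\omega) \in H^2(X;\Z)$. The strategy splits into two independent parts: (i) show that modulo the action of orientation-preserving self-diffeomorphisms of $X$, every $K_\omega$ coincides with the standard canonical class $K_{st}$, and (ii) show that once the canonical class is fixed, symplectic forms are unique up to deformation.

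For part (i), one first observes that $K_\omega$ is a characteristic element with $K_\omega^2 = 3\sigma(X)+2e(X) = 9-k$. In the $b_2^+=1$ setting Taubes' theorem, in the appropriate chamber determined by $[\omega]$, gives a nontrivial Seiberg-Witten invariant associated to $\pm K_\omega$, and SW $=$ Gr then produces pseudo-holomorphic representatives of the relevant classes. Combining this with the positivity constraints for symplectic $(-1)$-spheres, one shows that $K_\omega$ can be written in the standard form $-3H + \sum E_i$ after a change of basis in the orthogonal lattice $\langle H\rangle \perp \langle E_1,\dots,E_k\rangle$. The crucial input is that for $k \leq 9$, the subgroup of $\mathrm{Aut}(H^2(X;\Z))$ generated by geometric Cremona transformations (reflections in classes of $(-1)$-spheres and permutations of exceptional divisors) acts transitively on the orbit of such characteristic elements of square $9-k$, and every such lattice isometry is realized by an orientation-preserving diffeomorphism. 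For $k=10$ the lattice $K^2=-1$ admits additional orbits not conjugate to $K_{st}$, which is precisely why the uniqueness statement must be restricted to "standard canonical class" in that range.

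For part (ii), once $K_\omega = K_{st}$, one studies the symplectic cone $\mathcal{C}_{K_{st}} := \{[\omega] \in H^2(X;\R) : \omega \text{ symplectic}, K_\omega = K_{st}\}$. Using inflation (McDuff, Lalonde-McDuff) along symplectic $(-1)$-spheres and the fact that a del Pezzo Kähler class of the given canonical class exists for $k \leq 8$, one shows $\mathcal{C}_{K_{st}}$ is an open convex cone, explicitly described as the set of $\alpha$ in the forward positive cone with $\alpha \cdot E > 0$ for every class $E$ represented by an embedded symplectic sphere of square $-1$. A Moser-type argument combined with the inflation procedure then interpolates any two symplectic forms $\omega_0,\omega_1$ with $K_{\omega_j}=K_{st}$ through a family of cohomologous symplectic forms (after rescaling), producing the desired deformation equivalence.

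The main obstacle is part (i) — controlling the diffeomorphism orbit of $K_\omega$. The delicate point is ruling out putative canonical classes that satisfy $K^2 = 9-k$ and the SW constraints yet fail to be equivalent to $K_{st}$ under geometric Cremona moves. This requires a careful use of wall-crossing for SW invariants on $b_2^+=1$ manifolds together with Taubes' adjunction-type estimates to force every class entering the decomposition to be represented by an honest embedded symplectic sphere, at which point the reflections are realized diffeomorphically. For $k \leq 9$ the combinatorics of the root system dual to $K_{st}$ is finite and well-understood (extending the classical pattern $E_8, E_7, E_6, D_5, A_4, A_1\oplus A_2, A_1\oplus\emptyset$), which makes transitivity feasible; for $k=10$ this finiteness is lost and the restriction in the statement becomes necessary.
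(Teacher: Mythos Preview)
The paper does not give any proof of this theorem: it is stated purely as a citation of Li--Liu \cite{LL} and used later as a black box (specifically, to conclude that $\CP\#k\CPb$ for $k=6,7$ admits no symplectic structure with $K\cdot\omega>0$). So there is no ``paper's own proof'' to compare against.

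Your outline is a faithful sketch of the Li--Liu strategy: reducing uniqueness to (i) transitivity of the diffeomorphism group on canonical classes via reflections in symplectic $(-1)$-sphere classes, and (ii) path-connectedness of the symplectic cone for a fixed canonical class via inflation and Moser. One small inaccuracy: you mention that a del Pezzo K\"ahler class exists for $k\leq 8$, but the theorem covers $k=9$ as well (where $\CP\#9\CPb$ is the rational elliptic surface, not del Pezzo); the argument there still goes through because the relevant input is the description of the symplectic cone and the transitivity of the reflection group, not the del Pezzo property per se. Since the present paper only \emph{quotes} the result, for the purposes of this paper no proof is expected here.
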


\subsection{Hirzebruch Surfaces} \label{HS}

In this section, we define and review some basic facts and properties of Hirzebruch surfaces ${\mathbb{F}}_{n} = {\mathbb{P}}_{{\mathbb{P}}_{1}} (O \oplus O(-n))$ which will be needed in the sequel. The reader is referred to the reference \cite{H} for additional details.

A convenient way to define Hirzebruch surfaces ${\mathbb{F}}_{n}$ is as follows. For any non-negative integer $n$, we define $\mathbb{F}_{n} =\big\{((X_{0} :X_{1} :X_{2}),(Y_{0} :Y_{1}))|X_{1}{Y_1}^{n} = X_{2}{Y_0}^{n} \big \} \subset \mathbb{CP}^{2} \times \mathbb{CP}^1$ and call it the Hirzebruch surface of degree $n$. Notice that the restriction of the second projection to $\mathbb{F}_{n}$ gives a structure of a $\mathbb{CP}^1$-bundle. The complex surfaces $\mathbb{F}_n$ are holomorphic $\mathbb{CP}^1$-bundles over $\mathbb{CP}^1$ with holomorphic sections of self-intersection $-n$ (i.e. they are geometrically ruled complex surfaces). Conversely, any holomorphic $\mathbb{CP}^1$ bundle over  $\mathbb{CP}^1$ is isomorphic to $\mathbb{F}_n$ for some $n$. Let us consider a Zariski open set defined by $X_0 Y_0 \neq 0$ and take $(x,y) = (X_{1}/X_{0},Y_{1}/Y_{0})$ as affine coordinates. $\mathbb{F}_{n}$ has a minimal section defined by $x = 0$ and the fibre defined by $y = 0$. 


The following facts are well known and easy to prove:\\ 
\begin{enumerate}

\item $\mathbb{F}_0 \simeq \mathbb{CP}^1 \times \mathbb{CP}^1 = \mathbb{S}^2 \times \mathbb{S}^2$,\\ 

\item $\mathbb{F}_1 \simeq \mathbb{CP}^2 \# \CPb = \mathbb{S}^2 \tilde \times \mathbb{S}^2$,\\ 

\item $\mathbb{F}_n$ is a minimal complex surface if and only if $n \neq 1$,\\ 

\end{enumerate}

As smooth 4-manifolds, $\mathbb{F}_n$ is diffeomorphic to $\mathbb{F}_m$ if and only if $n \equiv m\ (mod \ 2)$. Moreover, smooth 4-manifolds $\mathbb{S}^2 \times \mathbb{S}^2$ and $ \mathbb{CP}^2 \# \CPb$ admit infinitely many inequivalent complex strucutres. Indeed, as a complex manifold, $\mathbb{F}_n$ is complex diffeomorphic to $\mathbb{F}_m$ if and only if $n = m$ (\cite{H}). 







The $n$-th Hirzebruch surface $\mathbb{F}_n$ admits two disjoint holomorphic sections of self intersections $n$ and $-n$. Let us denote them by $C_{\infty}$ and $C_0$ respectively, and the fiber class of $\mathbb{F}_n$ by $F$. It is easy to verify that $C_0 = C_{\infty}-nF$ (for the proof see \cite{Be}, Proposition IV.1, part (ii), page 40). This Proposition also shows that for any $n > 0$ there is a unique irreducible curve $C$ on $\mathbb{F}_n$ with negative self-intersection and class $C = C_{\infty}-nF$.

To determine the canonical class $K_{\mathbb{F}_n}$ of $\mathbb{F}_n$, let us express  $K_{\mathbb{F}_n}$ as $K_{\mathbb{F}_n}=aC_{\infty}+bF$. By applying the adjunction formula to classes $F$ and then $C_{\infty}$, we find 
\begin{equation}
K_{\mathbb{F}_n}= -2C_{\infty} + (n-2)F.
\end{equation}  
In particular, $K_{\mathbb{F}_2}= -2C_{\infty}$ and $K_{\mathbb{F}_3}= -2C_{\infty}+F$. The proof of the above formula for $K_{\mathbb{F}_n}$ can also be found in \cite{Hart} (see Corollary 2.11, page 374), though notation in \cite{Hart} is slightly different than ours; $C_{\infty}$ section there is denoted by $C_0$. By setting $g=0$ in Corollary 2.11 and applying the analogue of the formula $C_0 = C_{\infty}-nF$ gives the formula for $K_{\mathbb{F}_n}$. 

There is a well-known diffeomorphism between $\mathbb{F}_2 \# \CPb = \mathbb{S}^2 \times \mathbb{S}^2 \# \CPb$ and $\mathbb{S}^2 \tilde \times \mathbb{S}^2 \# \CPb = \mathbb{CP}^2 \# 2\CPb$, which can be verified by applying the sequence of $2$-handle moves as in Figure~\ref{dttt}.

\begin{figure}
\begin{center}
\scalebox{0.58}{\includegraphics{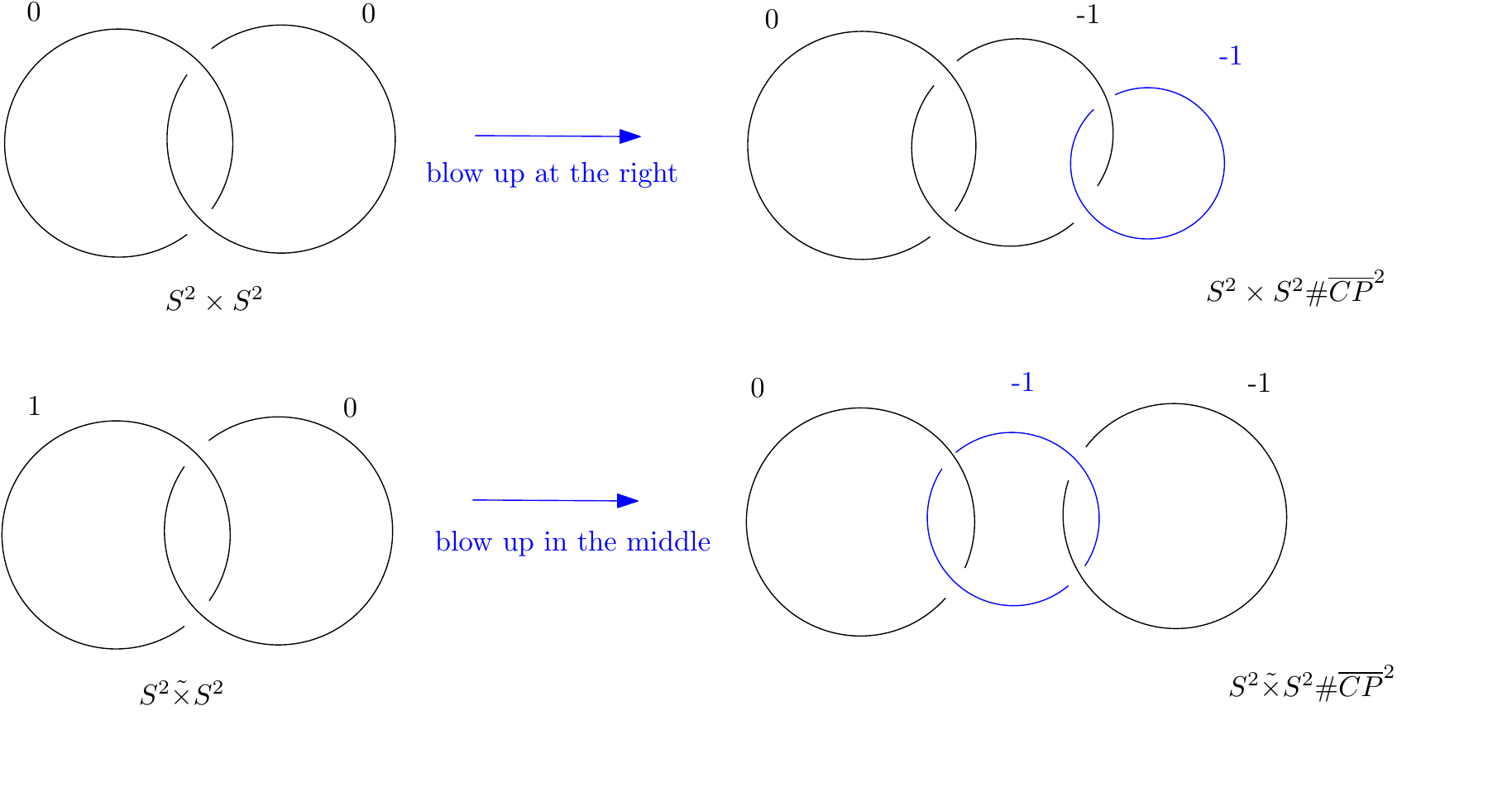}}
\caption{}
\label{dttt}
\end{center}
\end{figure}
 
In what follows, we will write down some explicit classes that will be needed later on in our computations. In $\mathbb{F}_2 \# \CPb $ let us take classes $F$, $C_{\infty}$, $C_0$ and class $e$ of the exceptional divisor coming from the blow up, where $F^2=0$, $C_{\infty}^2 = 2$, $C_0^2 = -2$, $e^2=-1$ and $C_{\infty} \cdot C_0 = 0, C_{\infty} \cdot F =1, C_0 \cdot F =1$. For computational purposes, we will write them in terms of classes $h$, $e_1$ and $e_2$ of squares $1$, $-1$ and $-1$ in the diffeomorphic manifold $\mathbb{CP}^2 \# 2\CPb$. Firstly, the canonical class $K$ of $\mathbb{CP}^2 \# 2\CPb$ is $-3h+e_1+e_2$, which  follows from the blow up formula. Let $F = ah+be_1+ce_2$. By solving equations $F^2 =0$, using adjunction equality for $F$, and assuming one of the two of equations $F \cdot e_1 = 1$ or $F \cdot e_2 = 1$, arising from the base point locus (see also the Figures~\ref{cl1} and ~\ref{cl2} for the illustration), we determine $a$, $b$ and $c$. We have the following two possibilities:
\begin{equation}
F=h-e_1 \ or \  F=h-e_2.
\end{equation}  

By fixing which exceptional divisor intersects the fiber, without loss of generality, we can assume that $F=h-e_1$. In the same way, we find
\begin{equation}
C_{\infty} = 2h-e_1-e_2, \;\;C_0=e_1 - e_2, \;\; e= h-e_1-e_2.
\end{equation}  

\begin{remark}\label{crm} There is a more convenient geometric way to see the above classes in $\mathbb{F}_2$ using the classes in $\mathbb{F}_1 \# \CPb = \mathbb{CP}^2 \# 2\CPb$. It is well known that in a Hirzebruch surface $\mathbb{F}_n$ there are two types of elementary transformations: one transforms $\mathbb{F}_n$ to $\mathbb{F}_{n+1}$ for all positive $n$, and the other transforms $\mathbb{F}_n$ to $\mathbb{F}_{n-1}$ for all positive $n$ (see for example the proof of Theorem 2 in \cite{Ul}, page 272-273). The first transformation  blows up a point $P$ on the exceptional sphere section, and then blows down the proper transform of the fiber passing through $P$. The second transformation blows up a point $Q$ on one of the fibers $F$, outside the exceptional section, and then blows down the proper transform of the fiber $F$. 

\end{remark}

Let us illustrate the above ideas for $\mathbb{F}_2$ with two specific examples below.

\begin{example} Let us consider a smooth quadric curve $C$ given by the class $2h$ and a line $L$ given by the class $h$ in $\mathbb{CP}^2$. The curves $C$ and $L$ are depicted as the blue and the purple curve in each of the Figures~\ref{cl1} and ~\ref{cl2}. Note that either $L$ intersects $C$ generically at two distinct points $p$ and $q$ as shown in Figure~\ref{cl1} or at one point $p$ with multiplicity $2$ as in Figure~\ref{cl2}. 

In Figure~\ref{cl1}, we consider two pencils of lines in $\mathbb{CP}^2$ passing through the points $p$ and $q$, respectively. Let us blow up the base points $p$ and $q$ of these two pencils. By blowing down the $-1$ curve $h-e_1-e_2$ in $\mathbb{CP}^2 \# 2\CPb$, we obtain $\mathbb{F}_0$. Note that $h-e_1$ and $h-e_2$ descend to the classes of $\mathbb{S}^2$ fibers in $\mathbb{F}_0$, and $2h - e_1 - e_2$ descends to the class of a section to both fibrations. If we blow up a point on the exceptional sphere $e_1$, instead of the second base point $q$, we can obtain $\mathbb{F}_2$ by blowing down the resulting $-1$ curve $h-e_1-e_2$ in $\mathbb{CP}^2 \# 2\CPb$.  

\begin{figure}{h}
\begin{center}
\scalebox{0.58}{\includegraphics{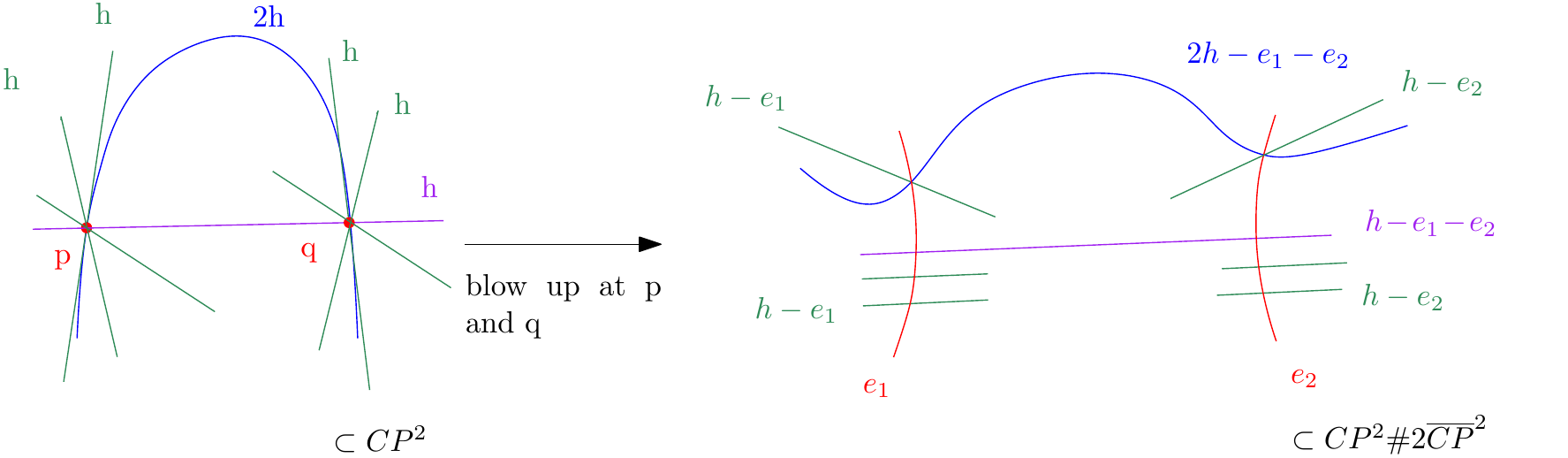}}
\caption{}
\label{cl1}
\end{center}
\end{figure}

In Figure~\ref{cl2}, we consider a pencil of lines in $\mathbb{CP}^2$ passing through the point $p$, and let $L$ (the purple line) be a tangent line to the curve $C$ (the blue curve) at point $p$ with multiplicity $2$. Let us blow up the base point $p$ of the pencil and point $q$ on the exceptional sphere $e_1$ as shown in the Figure~\ref{cl2}. By blowing down the $-1$ curve $h-e_1-e_2$ in $\mathbb{CP}^2 \# 2\CPb$, we obtain $\mathbb{F}_2$. Note that $h-e_1$ descends to the class of \ $\mathbb{S}^2$ fiber in $\mathbb{F}_2$ and $2h - e_1 - e_2$ descends to the class of $+2$ sphere section in $\mathbb{F}_2$.

\begin{figure}{h}
\begin{center}
\scalebox{0.58}{\includegraphics{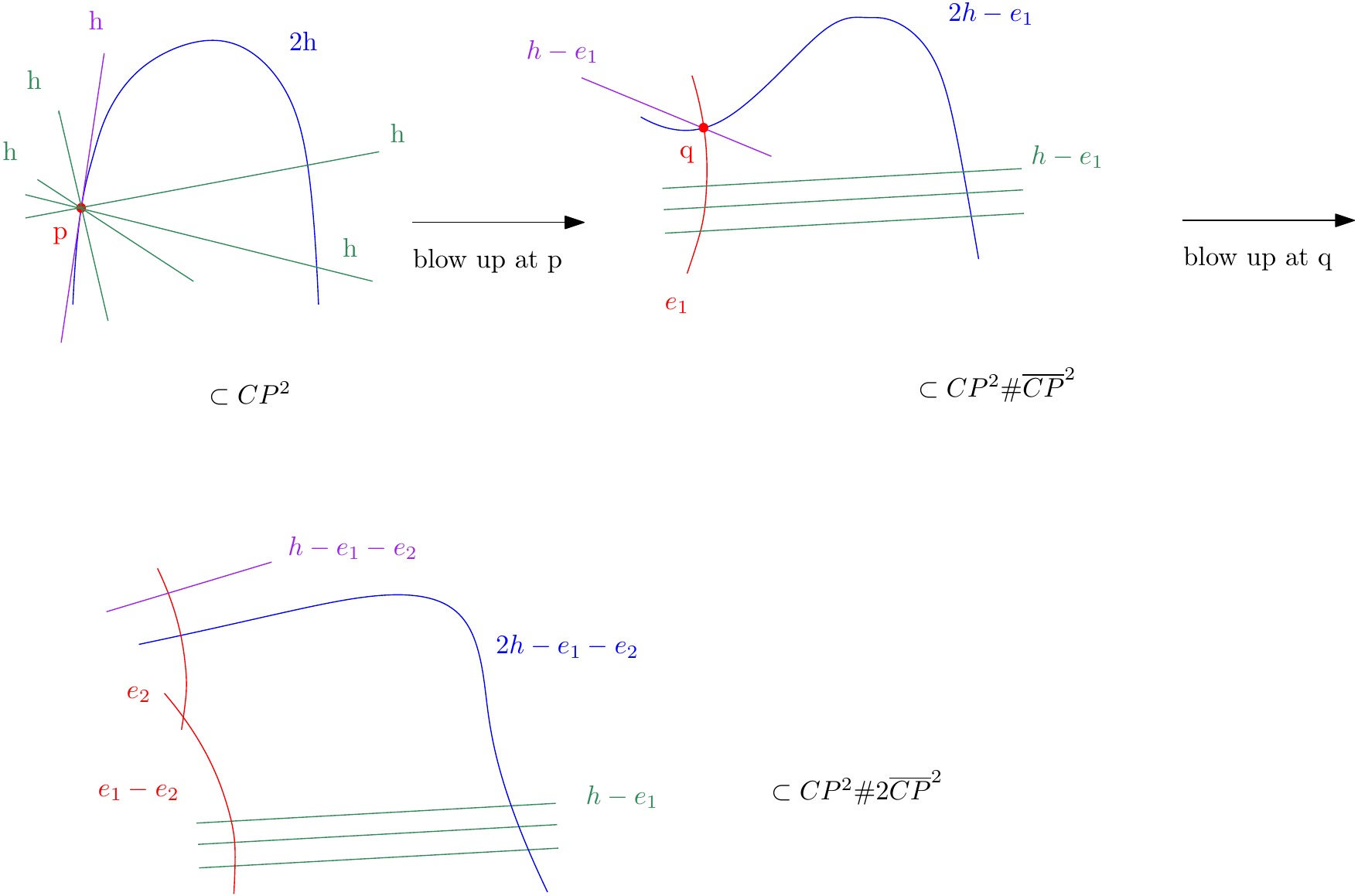}}
\caption{}
\label{cl2}
\end{center}
\end{figure} 

\end{example}

In the sequel, we will also use $\mathbb{F}_3 \cong \mathbb{CP}^2 \# \CPb$, for which a similar analysis of classes could be carried out using Remark~.\ref{crm}. We will take the classes $F, C_{\infty}, C_0$, where $F^2=0, C_{\infty}^2 = 3, C_0^2 = -3$ and $C_{\infty} \cdot C_0 = 0, C_{\infty} \cdot F =1, C_0 \cdot F =1$, and the classes $h, e_1$ of squares $1$ and $-1$ in $\mathbb{CP}^2 \# \CPb$. Similarly as in $\mathbb{F}_2$ case, we can assume that
\begin{equation}
F=h-e_1, \;\; C_{\infty} = 2h-e_1, \;\;C_0 = 2e_1-h.
\end{equation}  

Next, we discuss some algebraic properties of the Hirzebruch surface and state a few theorems and propositions which will be very useful for our purposes. These results are helpful for understanding the fibration structures on blow ups of the Hirzebruch surface. We also mention some needed results from \cite{Gong2}, where the classification and Mordell-Weil groups on these fibrations are thoroughly studied. The proofs of these propositions can be found in \cite{Hart, Harts} (see Corollary 2.18 in \cite{Hart}, page 380, Lemma 3.1 in \cite{SaSa}, page 865), Theorems 2.2, 3.1, Lemmas 3.2, pages 3-8 in \cite{Kit1}, and Theorem 1.6, Corollary 2.2 in \cite{Gong2}. In order to avoid any confusion, we will follow the notation of \cite{Hart, SaSa, Kit1} below.

Let $\pi: \mathbb{F}_{n} \rightarrow \mathbb{CP}^{1}$ be the Hirzebruch surface of degree $n$ with $0 \leq n \leq g$, where $g$ is the genus of a regular fiber. Notice that the Picard group $Pic(\mathbb{F}_{n})$ or $NS(\mathbb{F}_{n})$ is generated by the classes of the infinity section $C_{\infty}$ and fiber $F$ of $\pi$. The intersection pairings on $NS(\mathbb{F}_{n})$ are determined by ${C_{\infty}}^{2} = n$, $C_{\infty} \cdot F=1$, and $F^{2} = 0$. Also, the zero section $C_{0}$ of $\mathbb{F}_{n}$ is equal to $C_{\infty} - nF$, with ${C_{0}}^{2}=-n$.

The next Proposition will be used in the proof of Proposition \ref{prop2}, but it is also relevant to our discussion earlier in this subsection.

\begin{prop}\label{prop1}\cite{Hart} Let $D$ be the divisor $aC_{0} + bF$ on the rational ruled surface $\mathbb{F}_{n}$, and $n \geq 0$. Then: 
\begin{enumerate}[label=(\alph*)]
\item $D$ is very ample $\iff$ $D$ is ample $\iff$ $a > 0$ and $b > an$
\item the linear system $|D|$ contains an irreducible nonsingular curve $\iff$ it contains an irreducible curve $\iff$ $a=0$, $b=1$ (namely $F$); $a=1$, $b=0$ (namely $C_{0}$); or $a>0$, $b > an$; or $n > 0$, $a > 0$, $b=an$.  
\end{enumerate}
\end{prop}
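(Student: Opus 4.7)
The plan is to prove the two parts by standard algebraic geometry techniques: the Nakai--Moishezon criterion for ampleness, Serre's theorem together with an explicit section calculation on $\mathbb{F}_n$ for very ampleness, and a case analysis on intersection numbers combined with Bertini's theorem for the irreducibility/smoothness statements in part (b).

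For part (a), very ample trivially implies ample. For the direction ``ample $\Rightarrow a > 0$ and $b > an$,'' I would apply Nakai--Moishezon: since $D \cdot F = a$ and $D \cdot C_0 = b - an$ are intersections of $D$ with the irreducible curves $F$ and $C_0$, both must be strictly positive. The substantive converse direction ``$a > 0$, $b > an \Rightarrow$ very ample'' I would handle by induction on $a$. The base case $a = 1$ reduces to showing that $|C_0 + bF|$ with $b > n$ is very ample; this follows by exhibiting enough sections of $\mathcal{O}(C_0 + bF)$ (for instance, monomials in the bihomogeneous coordinates coming from the explicit description of $\mathbb{F}_n \subset \mathbb{CP}^2 \times \mathbb{CP}^1$) to separate points and tangent vectors. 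For the inductive step, decompose $D = (C_0 + nF) + (a-1)C_0 + (b - an)F$ and use that the tensor product of a very ample line bundle with a globally generated one of positive degree is again very ample.

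For part (b), the implication ``nonsingular irreducible $\Rightarrow$ irreducible'' is trivial. Conversely, suppose $|D|$ contains an irreducible curve $Y$ of class $aC_0 + bF$. If $Y$ is not a fiber, then $Y \cdot F \geq 0$ forces $a \geq 0$; if $a = 0$, irreducibility of $Y \sim bF$ pins down $b = 1$ and $Y = F$. Similarly, if $Y \neq C_0$, then $Y \cdot C_0 \geq 0$ gives $b \geq an$, landing us in one of the listed cases. For the reverse direction, $F$ and $C_0$ themselves are smooth irreducible. When $a > 0$ and $b > an$, part (a) says $D$ is very ample, so Bertini's theorem yields a smooth irreducible member of $|D|$. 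The boundary case $n > 0$, $a > 0$, $b = an$ (where $D = aC_\infty$) requires care: although $D$ fails to be ample, the linear system $|aC_\infty|$ is base-point-free on $\mathbb{F}_n \setminus C_0$ and defines a morphism to projective space whose restriction there is finite, so one can still invoke Bertini on the open set and check that the closure of a general smooth member meets $C_0$ transversally (or not at all), remaining smooth and irreducible.

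The main obstacle I anticipate is exactly this boundary case $b = an$: the straightforward Bertini argument is unavailable because $D$ is not ample, and one must analyze the contraction of $C_0$ induced by $|aC_\infty|$ in order to extract smooth irreducible members. A secondary subtlety lies in the very ampleness half of part (a), where either the explicit embedding or the induction argument must be carried out with some bookkeeping of $H^0$'s, using vanishing of $H^1$ on $\mathbb{F}_n$ to ensure surjectivity of restriction maps at each inductive stage.
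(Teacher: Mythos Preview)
The paper does not supply its own proof of this proposition: it is quoted verbatim from Hartshorne (Corollary V.2.18) with citation and no argument, so there is nothing in the paper to compare your proposal against. Your sketch is essentially the standard textbook approach---Nakai--Moishezon for the ampleness criterion, a decomposition of $D$ into a very ample plus a base-point-free piece for the very ampleness direction, and Bertini for part (b) with a separate treatment of the borderline case $b=an$---and is correct in outline.
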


The next proposition and the following discussion are helpful for understanding the fibration structure on the blowing up of the Hirzebruch surface $\mathbb{F}_{n}$.

\begin{prop}\label{prop2}\cite{SaSa} Let $k = g+1 - n > 0$. Then 

\begin{enumerate}[label=(\alph*)]

\item the linear system $2C_{\infty} + kF_{0}$ on the rational ruled surface $\mathbb{F}_{n}$ is very ample
\item a general member $D$ of  $|2C_{\infty} + kF_{0}|$ is a non-singular irreducible hyperelliptic curve of genus $g$.
\end{enumerate}

\end{prop}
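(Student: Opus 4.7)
My plan is to reduce both parts of the proposition to Proposition \ref{prop1} together with a standard adjunction computation, after translating the divisor from the $(C_\infty,F)$-basis to the $(C_0,F)$-basis used by Proposition \ref{prop1}.

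For part (a), I would first rewrite $D := 2C_\infty + kF_0$. Using the relation $C_\infty = C_0 + nF$ recalled earlier in Section \ref{HS}, we get
\[
D = 2C_0 + (2n+k)F,
\]
so in the language of Proposition \ref{prop1} we have $a = 2$ and $b = 2n+k$. The hypothesis $k > 0$ is exactly the condition $b > an$, while $a > 0$ is clear, so Proposition \ref{prop1}(a) applies directly and yields that $D$ is very ample.

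For part (b), the same inequalities $a > 0$ and $b > an$ put us in the third case of Proposition \ref{prop1}(b), which guarantees that $|D|$ contains an irreducible nonsingular curve. Since $D$ is very ample, the embedding $\mathbb{F}_n \hookrightarrow \mathbb{P}^N$ it defines allows Bertini's theorem to be invoked in the usual way, so a \emph{general} member of $|D|$ is smooth and irreducible. To pin down the genus, I would use the canonical class $K_{\mathbb{F}_n} = -2C_\infty + (n-2)F$ recorded in Section \ref{HS}, which gives $D + K_{\mathbb{F}_n} = (k+n-2)F$. The intersection pairing ($C_\infty \cdot F = 1$, $F^2 = 0$) then yields
\[
D \cdot (D + K_{\mathbb{F}_n}) = 2(k+n-2),
\]
so by the adjunction formula $g(D) = k + n - 1$. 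Substituting $k = g+1-n$ gives $g(D) = g$. Finally, hyperellipticity is seen geometrically by restricting the ruling $\pi: \mathbb{F}_n \to \mathbb{CP}^1$ to $D$: because $D \cdot F = 2$, this restriction is a degree-two morphism $D \to \mathbb{CP}^1$, exhibiting the required $g^1_2$ and showing $D$ is hyperelliptic whenever $g \geq 2$.

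I do not expect any real obstacle here. Both parts reduce mechanically to Proposition \ref{prop1} once $D$ is expressed in terms of $C_0$, and the genus and hyperelliptic claims follow from standard adjunction and the obvious projection to the base. The only bookkeeping point requiring care is consistent bookkeeping between the $C_0$- and $C_\infty$-conventions and the sign of $n$ in $K_{\mathbb{F}_n}$.
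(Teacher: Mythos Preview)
Your proposal is correct and follows essentially the same approach as the paper's own proof: both reduce part (a) to Proposition~\ref{prop1}, obtain hyperellipticity from the degree-$2$ projection $D\to\mathbb{CP}^1$ given by $D\cdot F=2$, and compute the genus via adjunction using $K_{\mathbb{F}_n}+D=(k+n-2)F=(g-1)F$. Your write-up is slightly more explicit in translating $D$ into the $(C_0,F)$-basis and in invoking Bertini for the general member, but there is no substantive difference.
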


\begin{proof} Since $k = g+1 - n >0$, the first part follows from the Proposition~\ref{prop1} given above.
Proposition~\ref{prop1} also implies that there exists a nonsingular irreducible member of the linear system $|D|$. Since a natural projection $D \rightarrow {\mathbb{P}}^{1}$ is a $2:1$ map, $D$ is a hyperelliptic curve. Using the canonical class formula $K_{\mathbb{F}_n}= -2C_{\infty}+ (n-2)F$, we compute $g(D) = \frac{(K_{\mathbb{F}_n} + D) \cdot D} {2} + 1 = \frac{(g-1)F \cdot D}{2}+1 = g$. Moreover, using the very ampleness of the linear system $|2C_{\infty}+ kF|$, we see that its generic smooth irreducible members $D_{0}$ and $D_{1}$ determine a Lefschetz pencil on $\mathbb{F}_{n}$. This means that a generic member $\{D_t\}_{t \in {\mathbb{P}^{1}}}$ of the pencil $\{D_t\}_{t \in {\mathbb{P}^{1}}}$, given by $D_{0}$ and $D_{1}$, is smooth and every member in the pencil is irreducible and has at most one node as its singularity. \end{proof}

Next note that $D_{0} \cdot D_{1} =(2C_{\infty}+ kF)^{2} = 4n+ 4k = 4g+4$ and we can assume that $D_0$ and $D_1$ intersects each other transversely. Therefore, we have $4g+4$ distinct points $P_{1}, \cdots , P_{4g+4}$ which are the base points of the pencil.  Furthermore, we can assume that these points do not lie on the section $C_0$ and any two of them are not on the same fiber of $\pi$. Under these assumptions, let $\phi: X \rightarrow \mathbb{F}_{n}$ be the blowing up of the points $P_{1}, \cdots , P_{4g+4}$, then we obtain the fibration $f: X  \rightarrow {\mathbb{P}^{1}}$ of curves of genus $g$. In this paper, we consider the case $g=2$, where the total space $X$ of the given fibration is  $\mathbb{F}_{n} \# 12\CPb = \mathbb{CP}^2 \# 13\CPb$

We will need the following theorems and corollaries derived~\cite{Kit1} in the proof of one of our main result in Section~\ref{MT}. More precisely, we will use Kitagawa's genus two pencil given in Theorems~\ref{main} and Corollary~\ref{lemma1}. For the proofs, we refer the reader to \cite{Kit1}.

\begin{theorem}\label{main1} [\cite{Kit1}] Let $X$ be a rational surface and $f: X \rightarrow \mathbb{CP}^{1}$ a relatively minimal fibration of genus $g \geq 2$. Assume that the Picard number $\rho(X) = 4g+6$. Then there exists a birational morphism $\mu: X \rightarrow \mathbb{F}_{n}$ with $n \leq g+1$ such that the following conditions (i), (ii) hold
\begin{enumerate}[label=(\roman*)]
\item $\mu_{*}F$ is linearly equivalent to $(2 C_{0} + (g+n+1)F)$.
\item The pull-back to $X$ of a $(-1)$-curve contracted by $\mu$ intersects with $F$ at just one point.
\end{enumerate}
In particular, $F$ is a hyperelliptic curve and $f$ has at least one $(-1)$-section.
\end{theorem}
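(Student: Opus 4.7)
My plan is to exploit rationality of $X$ to factor $f$ through a birational morphism $\mu \colon X \to \mathbb{F}_n$, and then to pin down the linear equivalence class of $\mu_*F$ together with the multiplicities at the centers of blow-up using Picard-number bookkeeping and adjunction on $\mathbb{F}_n$.

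First, since $X$ is a rational surface, the Enriques--Castelnuovo classification together with Castelnuovo's contractibility criterion produces a birational morphism $\mu \colon X \to \mathbb{F}_n$ for some $n \geq 0$, factoring as a sequence of point blow-ups at (possibly infinitely near) centers. Because each blow-up raises the Picard number by one and $\rho(\mathbb{F}_n) = 2$, the hypothesis $\rho(X) = 4g+6$ fixes the length of this tower at $N := 4g+4$. I would select the contractions inductively so that every contracted $(-1)$-curve $E_i$ is not contained in a fiber of $f$; this is possible because relative minimality of $f$ rules out $(-1)$-curves inside fibers. Each such $E_i$ then satisfies $E_i \cdot F = m_i \geq 1$, where $m_i$ denotes the multiplicity of a general member of the pencil generated by $\mu_*F$ at the $i$-th base point.

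Writing $\mu_*F \sim a C_0 + b F_0$ in $Pic(\mathbb{F}_n)$ and using $K_{\mathbb{F}_n} = -2C_0 - (n+2) F_0$, the projection formula together with $F^2 = 0$ and $K_X \cdot F = 2g-2$ produces
\begin{align*}
-a^{2} n + 2 a b &= \sum_{i=1}^{N} m_{i}^{2}, \\
an - 2b - 2a + \sum_{i=1}^{N} m_{i} &= 2g - 2.
\end{align*}
Part (ii) amounts to the assertion $m_i = 1$ for every $i$: a general member of the pencil is smooth (it is a generic fiber of $f$), and smoothness at every base point---including the infinitely near ones---forces multiplicity one. Substituting $\sum m_i = \sum m_i^2 = 4g+4$ into the two displayed identities then forces $a = 2$ and $b = g+n+1$, which is (i). The bound $n \leq g+1$ is exactly the inequality $b \geq an$ from Proposition~\ref{prop1}(b) ensuring that a general member of $|2 C_0 + (g+n+1) F_0|$ is irreducible and non-singular. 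Hyperellipticity of $F$ is then immediate, since the ruling of $\mathbb{F}_n$ restricts to a degree-$a = 2$ map on $\mu_*F$; and any $(-1)$-curve produced at the final stage of $\mu$ descends to a $(-1)$-section of $f$ on $X$.

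The main obstacle is the careful inductive argument required to establish (ii), i.e., showing that $\mu$ can be chosen so that every intermediate center of blow-up is a simple base point of the pencil, together with the normalization of the target Hirzebruch degree into the allowable range $0 \leq n \leq g+1$. The latter may require performing elementary transformations among Hirzebruch surfaces, and tracking the multiplicities $m_i$ faithfully through these transformations is the technically involved part of Kitagawa's argument in \cite{Kit1}.
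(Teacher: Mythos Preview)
The paper does not supply its own proof of this statement; it quotes the result from Kitagawa \cite{Kit1} and explicitly refers the reader there for the argument. So there is nothing in the paper to compare your sketch against directly.

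That said, two points in your outline deserve comment. First, your justification of (ii) --- ``a general member of the pencil is smooth \ldots\ and smoothness at every base point forces multiplicity one'' --- is circular as written. What is smooth is the general fiber $F$ on $X$; its image $\mu_*F$ on $\mathbb{F}_n$ (or on any intermediate surface) need not be smooth, since blowing down a $(-1)$-curve meeting $F$ in $m \geq 2$ points produces an $m$-fold point on the image. Smoothness of $\mu_*F$ at a base point is \emph{equivalent} to $m_i = 1$ there, not a hypothesis from which it follows. You do flag this as the ``main obstacle'' at the end, which is accurate: this is where Kitagawa's inductive choice of contractions does real work.

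Second, even granting all $m_i = 1$, your two displayed identities do not force $a = 2$. Eliminating $b$ yields the quadratic $a^2 - (g+3)a + 2(g+1) = 0$, whose roots are $a = 2$ and $a = g+1$. Ruling out (or transforming away) the $a = g+1$ branch is part of what the elementary transformations you allude to must accomplish; it is not automatic from the Picard-number bookkeeping alone.
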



\begin{theorem}\label{main}[\cite{Kit1}] Let $X$ be a rational surface and $f: X \rightarrow \mathbb{CP}^{1}$ a relatively minimal fibration of genus $g \geq 2$. Assume that the Picard number $\rho(X) = 4g+6$. Let $\mathbb{K} = f^{\star}(\mathbb{C}(\mathbb{P}^1))$. Then the following assertions are equivalent.

\begin{enumerate}[label=(\roman*)]

\item Mordell-Weil group of $f$ is trivial.

\item $f$ has a reducible fibre whose dual graph is as in Figure~\ref{figur3}. Here the empty circles in the figure are $-2$ curves, the oval is a $-(g+1)$ curve and the numbers without circles denote the multiplicities of components in the reducible fiber

\item $f: X \rightarrow \mathbb{CP}^{1}$ is obtained from $\mathbb{F}_{g}$ by eliminating the base points of the following pencil $\Lambda$ : Let $C_0$ be the minimal section and $F$ a fibre of $\mathbb{F}_{g}$. Take a curve $H_{[g]}$ which is linearly equivalent to $2 C_0 + (2g + 1)F$ and which is tangent to $F$ at the intersection point of $F$ with $C_0$. Then $2 C_0 + (2g + 1)F$ and $H_{[g]}$ generate the pencil $\Lambda$.

\end{enumerate}
\end{theorem}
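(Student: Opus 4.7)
The plan is to establish the three-way equivalence by proving the cyclic chain (iii) $\Rightarrow$ (ii) $\Rightarrow$ (i) $\Rightarrow$ (iii), with Theorem~\ref{main1} as the main external input. The overall strategy uses the classical correspondence between sections of a fibration and elements of the Mordell-Weil group (via a Shioda-Tate-type rank count on $\mathrm{Pic}(X)$), together with explicit blowup bookkeeping for the pencil on $\mathbb{F}_g$.

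For (iii) $\Rightarrow$ (ii), I would work directly with the pencil $\Lambda$ generated by a smooth member of $|2C_0 + (2g+1)F|$ and the tangent curve $H_{[g]}$. Since any two members of $\Lambda$ intersect in $(2C_0+(2g+1)F)^2 = 4g+4$ points, the base locus consists of $4g+3$ transverse points together with a non-reduced point at $p = F \cap C_0$ coming from the tangency of $H_{[g]}$ to $F$. Blowing up the transverse points is routine; the resolution over $p$ is a cascade of infinitely-near blowups whose successive exceptional divisors become the chain of $-2$-curves in the prescribed dual graph, while the proper transform of $C_0$ (which starts at self-intersection $-g$) becomes the oval $-(g+1)$-curve after the first blowup on it. The multiplicities appearing in Figure~\ref{figur3} are then read off from the coefficients in the total transform of $F$ produced by this resolution.

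For (ii) $\Rightarrow$ (i), I would couple the existence of a $(-1)$-section from Theorem~\ref{main1} with a rank count: the trivial sublattice of $\mathrm{Pic}(X)$ generated by a section, a smooth fiber, and the non-identity components of the reducible fibers has rank equal to $\rho(X) - \mathrm{rk}(\mathrm{MW})$. The reducible fiber described in Figure~\ref{figur3} alone contributes enough independent components to saturate $\rho(X) = 4g+6$, forcing $\mathrm{rk}(\mathrm{MW}) = 0$; a direct computation of the discriminant of the intersection matrix of that configuration then shows the torsion vanishes as well. For (i) $\Rightarrow$ (iii), Theorem~\ref{main1} produces a birational morphism $\mu : X \rightarrow \mathbb{F}_n$ with $n \leq g+1$ and a $(-1)$-section $\sigma$ of $f$. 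Triviality of the Mordell-Weil group forces $\mu(\sigma) = C_0$ (otherwise distinct $(-1)$-sections would descend to distinct sections of $\mathbb{F}_n$), and it forces all $4g+4$ base points of the pencil $\mu_\ast \Lambda \subset |2C_0 + (g+n+1)F|$ to collide on the unique fibre through $p = F \cap C_0$. A short case analysis on $n$ using Proposition~\ref{prop1} rules out $n \neq g$, leaving $\mu_\ast|F| = |2C_0 + (2g+1)F|$ and forcing the second generator to be a curve $H_{[g]}$ tangent to $F$ at $p$, which is exactly the pencil $\Lambda$ in (iii).

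The main obstacle I anticipate is the fine bookkeeping in (iii) $\Rightarrow$ (ii): determining exactly how many infinitely-near blowups the non-reduced base point at $p$ requires, identifying the correct blowup center at each stage, tracking how the self-intersections of the proper transforms of $F$ and $C_0$ evolve, and matching the resulting intersection graph together with the multiplicity labels to Figure~\ref{figur3} on the nose. A close second is the rigidity step in (i) $\Rightarrow$ (iii), where one must simultaneously pin down $n = g$ and force the precise tangency condition at $p$ using only the hypothesis that the Mordell-Weil group is trivial.
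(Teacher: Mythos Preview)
The paper does not prove this theorem at all: it is quoted verbatim from \cite{Kit1} and the authors explicitly write ``For the proofs, we refer the reader to \cite{Kit1}.'' So there is no in-paper argument to compare your proposal against.

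That said, your outline is broadly along the lines one would expect from Kitagawa's original argument (Shioda--Tate rank count for (ii)~$\Rightarrow$~(i), explicit resolution of the infinitely-near base scheme for (iii)~$\Rightarrow$~(ii), and a rigidity analysis via Theorem~\ref{main1} for (i)~$\Rightarrow$~(iii)). One point to be careful with: in (i)~$\Rightarrow$~(iii) you assert that triviality of the Mordell--Weil group forces \emph{all} $4g+4$ base points to collide on a single fibre through $p$, but the actual pencil $\Lambda$ in (iii) has $(2C_0+(2g+1)F)\cdot H_{[g]}=4g+4$ with the reducible member $2C_0+(2g+1)F$ contributing a highly non-reduced intersection concentrated at $p$ rather than $4g+3$ transverse points plus one tangency; your description of the base locus in (iii)~$\Rightarrow$~(ii) has the same issue. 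The correct picture is that the generic smooth member meets the reducible member $2C_0+(2g+1)F$ entirely along $C_0$ and $F$, and the resolution of the resulting base scheme is a single tower of $4g+4$ infinitely-near blowups over $p$, which is exactly what produces the long chain of $(-2)$-curves in Figure~\ref{figur3} and the unique $(-1)$-section. You should revisit the intersection-multiplicity bookkeeping with this in mind before the argument will go through.
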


\begin{figure}
\begin{center}
\scalebox{0.58}{\includegraphics{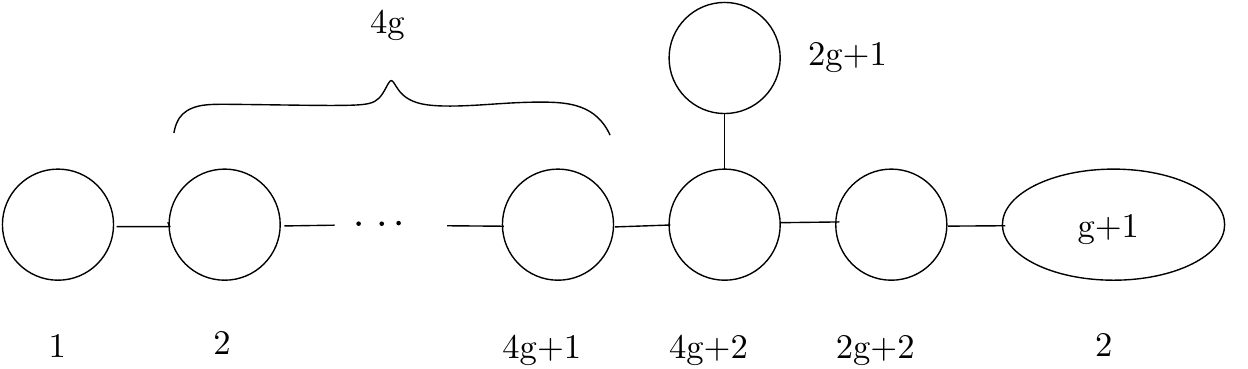}}
\caption{}
\label{figur3}
\end{center}
\end{figure}


\begin{cor}\label{lemma1}[\cite{Kit1}] Assume that $f$ has a reducible fibre whose dual graph is as in Figure~\ref{figur3}.
Then 
\begin{enumerate}[label=(\roman*)]
\item There exists a birational morphism $\mu : X \rightarrow \mathbb{F}_{g}$ such that the pencil $\Lambda$ as in (iii) in Theorem~\ref{main} is obtained from a base-point-free pencil $|F|$ as images by $\mu$.
\item Mordell-Weil group of f is trivial. In particular, a $(-1)$-section of $f$ is unique.
\end{enumerate}
\end{cor}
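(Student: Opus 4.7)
The plan is to derive Corollary~\ref{lemma1} directly from Theorem~\ref{main} (together with Theorem~\ref{main1} for the uniqueness statement). The hypothesis — that $f$ has a reducible fibre whose dual graph is as in Figure~\ref{figur3} — is precisely condition (ii) of Theorem~\ref{main}, so the equivalence (i) $\Leftrightarrow$ (ii) $\Leftrightarrow$ (iii) immediately gives both the triviality of the Mordell-Weil group and the description of $f$ as arising from $\mathbb{F}_{g}$ via elimination of the base points of the pencil $\Lambda$ spanned by $2C_{0}+(2g+1)F$ and $H_{[g]}$. Thus the corollary is essentially a repackaging of the implications already established in Theorem~\ref{main}, and the proof amounts to tracing through the blow-up construction.

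First I would prove part (i). The elimination-of-base-points procedure supplied by Theorem~\ref{main}(iii) consists of a finite sequence of blow-ups at the base points (and infinitely near base points) of $\Lambda$ on $\mathbb{F}_{g}$; the composition of these blow-ups, read as a morphism from the top, produces the birational morphism $\mu : X \to \mathbb{F}_{g}$ we are looking for. Along $\mu$, the strict transform of $\Lambda$ becomes the base-point-free pencil of fibres $|F|$ of $f$, so pushing forward one has $\mu_{\ast}|F|=\Lambda$. This is exactly the statement that $\Lambda$ is obtained as the image of $|F|$ under $\mu$.

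For part (ii), the triviality of the Mordell-Weil group is the immediate content of the implication (ii) $\Rightarrow$ (i) in Theorem~\ref{main}. To upgrade this to uniqueness of a $(-1)$-section, I would invoke Theorem~\ref{main1}, which guarantees that at least one $(-1)$-section of $f$ exists. Fixing this section as the zero section makes the set of sections of $f$ a principal homogeneous space under the Mordell-Weil group; since the latter is trivial, there is exactly one section, and hence exactly one $(-1)$-section.

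The hard part, to the extent that there is one, lies in part (i): one should verify that the birational morphism obtained by resolving the indeterminacy of the rational map $\mathbb{F}_{g}\dashrightarrow\mathbb{P}^{1}$ associated to $\Lambda$ genuinely reproduces $X$ together with the fibration $f$ (rather than merely some birationally equivalent model), and that each blow-up is genuinely required so that $\mu$ is a morphism and not only a birational map. Given the constructive form of Theorem~\ref{main}(iii), this bookkeeping has effectively been done inside the proof of that theorem, and I do not expect a serious obstacle beyond carefully citing it.
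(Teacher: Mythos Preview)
The paper does not give its own proof of this corollary; it merely quotes the statement from Kitagawa \cite{Kit1} (where it is Lemma~3.2) and explicitly refers the reader to that paper for the proofs. Your derivation from Theorem~\ref{main} is correct and is exactly what the label ``Corollary'' invites: condition~(ii) of Theorem~\ref{main} is your hypothesis, so (iii) gives the blow-up description producing $\mu$, and (i) gives the triviality of the Mordell--Weil group.

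One small terminological correction in part~(ii): for $g\geq 2$ the set of sections is not a principal homogeneous space under the Mordell--Weil group (that is the picture for elliptic fibrations). The Mordell--Weil group here is $\mathrm{Jac}(C_{\mathbb{K}})(\mathbb{K})$, and sections correspond to $\mathbb{K}$-rational points of the generic curve $C_{\mathbb{K}}$ itself, which embed via Abel--Jacobi into the Jacobian. The argument you want is: given two sections $P,Q\in C_{\mathbb{K}}(\mathbb{K})$, the class $[P-Q]$ lies in $\mathrm{Jac}(C_{\mathbb{K}})(\mathbb{K})=\{0\}$, so $P\sim Q$ as divisors, and since $g\geq 1$ this forces $P=Q$. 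Combined with the existence of a $(-1)$-section from Theorem~\ref{main1}, uniqueness follows. Your conclusion is right; only the mechanism needed a slight adjustment.
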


\begin{cor}\label{lemma2}[\cite{Kit1}]
Assume that $n = g$ or $g + 1$ for all birational morphisms $X \rightarrow \mathbb{F}_{n}$ satisfying conditions (i), (ii) in Theorem~\ref{main}. Then f has a reducible fibre whose dual graph is as in Figure~\ref{figur3}.
\end{cor}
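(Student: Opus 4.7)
\medskip

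I would prove the contrapositive: assuming $f$ has no reducible fibre whose dual graph is as in Figure~\ref{figur3}, I would produce a birational morphism $\mu: X \to \mathbb{F}_n$ satisfying conditions (i), (ii) of Theorem~\ref{main1} with $n \leq g-1$, contradicting the hypothesis. By the equivalence (i)$\Leftrightarrow$(ii) of Theorem~\ref{main}, the absence of such a reducible fibre is equivalent to the Mordell-Weil group of $f$ being nontrivial, so $f$ admits at least two distinct sections $\sigma_1 \neq \sigma_2$.

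The heart of the argument would be to show that the existence of these two sections forces a birational morphism to $\mathbb{F}_n$ with strictly smaller $n$ than $g$. Since $\rho(X) = 4g+6$ and $\rho(\mathbb{F}_n) = 2$, any birational morphism $\mu: X \to \mathbb{F}_n$ contracts exactly $4g+4$ exceptional curves in a tower of blow-downs, and condition (ii) of Theorem~\ref{main1} says each of them pulls back to meet a generic fibre $F$ at a single point. The surviving sections of $f$ are what determine how the contraction tower finishes: roughly, the last $(-1)$-curve one contracts is a section of $f$, and its image in $\mathbb{F}_n$ (a section of the $\mathbb{P}^1$-ruling) has self-intersection $-n$ or $n$. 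In the MW-trivial case Corollary~\ref{lemma1}(ii) tells us there is a unique $(-1)$-section, and Kitagawa's Theorem~\ref{main1} together with (iii) of Theorem~\ref{main} pins $n$ down to $g$ or $g+1$. With two sections in hand, I would arrange the contraction tower so that \emph{both} $\sigma_1$ and $\sigma_2$ get contracted: contract $\sigma_1$ first; in the resulting surface $\sigma_2$ becomes a smooth rational curve of self-intersection $0$ meeting a generic fibre once; then continue contracting $(-1)$-curves inside the (now-modified) singular fibres so that at the end the image of $\sigma_2$ becomes a section of smaller negative self-intersection. A divisor class computation using $\mu_*F \sim 2C_0 + (g+n+1)F_{\mathbb{F}_n}$ together with $\mu_*\sigma_i \cdot \mu_*F = 1$ should yield $n \leq g - 1$.

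The main obstacle I anticipate is the Picard-lattice bookkeeping: one must verify that $\sigma_1, \sigma_2$ can be chosen disjoint (which follows by adjunction and relative minimality, as each section is a smooth rational curve and two distinct sections in a relatively minimal fibration intersect trivially), that the iterated blow-down can be arranged so both sections are contracted while preserving conditions (i), (ii) of Theorem~\ref{main1}, and that the remaining $(-1)$-curves in singular fibres do not obstruct the reduction of $n$. This should be handled exactly as in Kitagawa~\cite{Kit1}: analyze the dual graph of each singular fibre, use the bound $n \leq g+1$ of Theorem~\ref{main1} as a baseline, and track the intersection pairing explicitly through each blow-down. Once the contradictory morphism $\mu: X \to \mathbb{F}_n$ with $n \leq g-1$ is constructed, the corollary follows.
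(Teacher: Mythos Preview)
The paper does not supply its own proof of this corollary; it is quoted from Kitagawa~\cite{Kit1} with the remark ``For the proofs, we refer the reader to \cite{Kit1}.'' So I can only evaluate your proposal on its own terms and against the logical structure of \cite{Kit1} as reflected here.

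Your contrapositive strategy is sensible in outline, but there are two genuine problems.

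First, there is a circularity risk. You invoke the equivalence (i)$\Leftrightarrow$(ii) of Theorem~\ref{main} to conclude that the Mordell--Weil group is nontrivial. In \cite{Kit1} the present corollary (together with Corollary~\ref{lemma1}) is part of the machinery used to \emph{establish} Theorem~\ref{main}; in particular it supplies the implication back toward condition (ii). Before leaning on (i)$\Leftrightarrow$(ii) you would need to verify that this particular equivalence is proved in \cite{Kit1} independently of the corollary at hand.

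Second, and more concretely, the claim that ``two distinct sections in a relatively minimal fibration intersect trivially'' is false in general: sections of a fibred surface certainly can meet. Your argument is internally inconsistent on exactly this point---you assert that $\sigma_1,\sigma_2$ are disjoint, yet you also say that after contracting $\sigma_1$ the curve $\sigma_2$ acquires self-intersection $0$, which would require $\sigma_1\cdot\sigma_2=1$. You are also tacitly assuming $\sigma_2^2=-1$, but Theorem~\ref{main1} only guarantees \emph{one} $(-1)$-section; a second section produced by a nontrivial Mordell--Weil element need not be a $(-1)$-curve. Without control of $\sigma_2^2$ and $\sigma_1\cdot\sigma_2$, the contraction tower you sketch does not obviously terminate in a morphism to $\mathbb{F}_n$ with $n\le g-1$. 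The ``Picard-lattice bookkeeping'' you defer to the final paragraph is exactly where the proof lives; the heuristic you give does not carry it.
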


\section{Singular fibres in genus two pencils}\label{singular}

\subsection{Classification of singular fibres in pencils of curves of genus two}

In \cite{Kod}, Kodaira gave the classification of possible singular fibres in pencils of elliptic curves, and showed that in a pencil of elliptic curves, each fibre is either an irreducible curve of arithmetic genus one, i.e. an elliptic curve or a rational curve with a node or a cusp, or a sum of rational curves of self-intersection $-2$ which fall into seven different types. In \cite{Ogg}, Ogg applied Kodaira's argument to pencils of curves of genus two. He classified all possible numerical types of fibres in pencils of genus two curves, and showed that there are $44$ types. Iitaka \cite{Ita} also gave such a classification independently. These fibers were shown actually to arise by Winters (\cite{Wi}). Later Namikawa and Ueno gave geometrical classification of all fibres in pencils of genus two curves \cite{NamU, NamU2}. More precisely, let $\pi: X \rightarrow \mathbb{D}$ be a family of curves of genus two over a disc $\mathbb{D}=\{t \in \mathbb C, |t|<\epsilon\}$, where $X$ is a non-singular (complex analytic) surface free from exceptional curves of the first kind, and $\pi$ is smooth over the punctured disc $D'=\mathbb{D}-\{0\}$. Thus, for every $t \in D'$ the fibre $\pi^{-1}(t)$ is a compact non-singular curve (Riemann surface) of genus two and the restriction of $\pi$ to $D'$ is a topological fibre bundle. For such a family, Namikawa and Ueno defined a multi-valued holomorphic map $T_{\pi}$ from $D’$ into the Siegel upper half plane of degree two, and defined three invariants called ``monodromy", ``modulus point" and ``degree". They showed that the family $\pi$ is completely determined by $T_{\pi}$, and its singular fiber by these three invariants. Hence all types of fibers were classified by these invariants and they listed them up in a table. There are about $120$ types divided into five groups.

The article \cite{Gong} of Gong, Lu and Tan, which uses Namikawa and Ueno's classification, has studied the relatively minimal, isotrivial fibrations of genus $g\geq2$, which will be helpful for our purposes. A fibration is called relatively minimal if no fiber contains an exceptional curve. It is called isotrivial if all smooth fibers are isomorphic to each other. Moreover, a non-trivial family $f:S \rightarrow \mathbb{CP}^1$ of complex curves of genus $g\geq1$ admits at least two singular fibers. Let $f:S \rightarrow \mathbb{CP}^1$ be a fibration with exactly two singular fibers $F_1$ and $F_2$. In this case, $f$ is isotrivial according to \cite{Gong}. From Theorem 1.2 in \cite{Gong} we know that

\begin{theorem} (\cite{Gong}). Let $f:S \rightarrow \mathbb{CP}^1$ be a relatively minimal fibration of genus $g=2$ with two singular fibers $F_1$ and $F_2$. Then $F_1 - F_2$ are one of the following 11 types
I*-I*, II-II, III-III, IV-IV, V-V*, VI-VI, VII-VII*, (VIII-1)-(VIII-4), (VIII-2)-(VIII-3), (IX-1)-(IX-4), (IX-2)-(IX-3) 
where each number denotes a singular fiber in \cite{NamU}.
\end{theorem}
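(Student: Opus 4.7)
The plan is to reduce the classification to a constrained search over Namikawa and Ueno's list. Since $f$ has exactly two singular fibres it is isotrivial, as already recalled. Hence the smooth locus of $f$ is a topological $\Sigma_{2}$-bundle over $\mathbb{CP}^{1}\setminus\{p_{1},p_{2}\}$, whose fundamental group is infinite cyclic, and the monodromy representation into $\mathrm{Mod}(\Sigma_{2})$ is determined by a single element. In particular the local monodromies $\rho_{1},\rho_{2}$ around $p_{1},p_{2}$ must satisfy $\rho_{1}\rho_{2}=1$, so $\rho_{2}=\rho_{1}^{-1}$ up to conjugation. Namikawa and Ueno's table associates to every singular fibre type a well-defined conjugacy class of monodromy in $\mathrm{Mod}(\Sigma_{2})$, explicitly computed by Ishizaka \cite{Ish2,Ish}; the candidate pairs $(F_{1},F_{2})$ are therefore exactly those whose monodromy classes are mutually inverse.

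The next step is to give a uniform geometric model of all admissible $f$. By isotriviality, after base change $f$ becomes birational to a product, producing a smooth genus two curve $C$, a subgroup $G\subset\mathrm{Aut}(C)$, and a faithful action of $G$ on $\mathbb{CP}^{1}$ such that $S$ is birational to $(C\times\mathbb{CP}^{1})/G$ and $f$ factors through the projection $\mathbb{CP}^{1}\to\mathbb{CP}^{1}/G\simeq\mathbb{CP}^{1}$. Because the Galois cover $\mathbb{CP}^{1}\to\mathbb{CP}^{1}$ branched over two points is cyclic, $G$ must be cyclic, and the singular fibres of $f$ appear precisely over the two branch points. The local model of each singular fibre is determined by the stabiliser action near a fixed point of $G$ in $C$ and in $\mathbb{CP}^{1}$, followed by passing to the relatively minimal smooth model.

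The remainder is a finite enumeration. One invokes Bolza's classification of automorphism groups of genus two curves and, for each isomorphism class of $C$, lists the cyclic subgroups $\langle\sigma\rangle\subset\mathrm{Aut}(C)$ up to conjugacy together with their orders and the local weights at the two branch points. For each such choice one resolves $(C\times\mathbb{CP}^{1})/\langle\sigma\rangle$ above each branch point and identifies the resulting reducible fibre in the Namikawa-Ueno tables by matching its dual graph, component multiplicities, and monodromy against Ishizaka's list. Carrying this out produces exactly the eleven pairs claimed, and the isotrivial construction simultaneously realises each of them, proving both existence and uniqueness of the list.

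The principal difficulty is bookkeeping in this last step. Different cyclic subgroups can yield the same fibre type at one branch point but distinct types at the other (for instance $\mathrm{V}$--$\mathrm{V}^{*}$ arises this way rather than $\mathrm{V}$--$\mathrm{V}$), so the enumeration must be done at the level of the full Namikawa-Ueno invariants (monodromy, modulus point, degree), not merely monodromy conjugacy classes, in order to avoid spurious coincidences and omissions. A secondary subtlety is ensuring that the two branch points of the $\mathbb{CP}^{1}$-action are chosen coherently with the relative minimality of $f$, which rules out candidate pairs whose nominal monodromies cancel but which only yield the correct minimal model after further blow-downs.
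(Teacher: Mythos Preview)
The paper does not prove this theorem at all: it is quoted verbatim from \cite{Gong} (Theorem~1.2 there) and used as a black box, so there is no ``paper's own proof'' to compare against. What the present paper does add is the remark immediately following the statement, namely that each such fibration is the relatively minimal model of a normalised double cover of a Hirzebruch surface $\mathbb{F}_n$ branched along a curve $B$; this is the geometric input actually used later, and it is also attributed to \cite{Gong}.

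Your proof plan is a reasonable sketch of how the result in \cite{Gong} is obtained. The reduction to isotriviality, the identification of the monodromies as mutual inverses, the passage to a cyclic quotient $(C\times\mathbb{CP}^1)/G$, and the enumeration via Bolza's list of $\mathrm{Aut}(C)$ for genus two curves are the right ingredients. One point to tighten: you should justify more carefully why the cyclic group acting on $\mathbb{CP}^1$ with exactly two branch points forces the whole $G$ acting on $C\times\mathbb{CP}^1$ to be cyclic (a priori $G$ could act on the $\mathbb{CP}^1$ factor through a cyclic quotient), and why every cyclic subgroup of $\mathrm{Aut}(C)$ actually yields a relatively minimal fibration with \emph{exactly} two singular fibres rather than fewer. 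These are handled in \cite{Gong} by working directly with the double-cover model over $\mathbb{F}_n$ and the explicit branch curves, which bypasses some of the bookkeeping you flag at the end.
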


In the above theorem and throughout this paper, we use the enumeration of singular fibers as in \cite{NamU}.

\begin{remark} Each fibration $f: S \rightarrow \mathbb{CP}^1$ above is a relatively minimal model of the normalized double cover $\pi: Y \rightarrow \mathbb{F}_n$ over a Hirzebruch surface $\mathbb{F}_n \rightarrow \mathbb{CP}^1$ branched along a curve $B$ (\cite{Gong}, p.90). 
\end{remark}

In this paper, we will work with the fibrations of types (VIII-1 - VIII-4) and (V - V*). The following proposition-definition is needed in our computation of the total space of each of these fibrations. Our computation will show that the total space of each of these fibrations is $\CP\#13\CPb$.

\begin{definition} (\cite{Gong}, p.85)
Let $f:S \rightarrow C$ be a relatively minimal fibration of genus $g$ over a smooth curve of genus $b$. Then the relative numerical invariants of the fibration are defined as follows

\begin{eqnarray} 
K_f^2 &=& c_1^2(S) - 8(g-1)(b-1)\\
\chi_f &=& \chi(\mathcal{O}_S) - (g-1)(b-1)\\
q_f &=& q(S) - g(C)\\
e_f &=& c_2(S) - 4(g-1)(b-1) = \sum_F (\chi_{top}(F)-(2-2g))
\end{eqnarray}
where the summation is over the singular fibers and $\chi_{top}$ is the topological Euler characteristic.
\end{definition}

The fibrations of types (VIII-1 - VIII-4) and (V - V*) are genus 2 fibrations over $\mathbb{S}^2$ and for each, the following holds: $K_f^2 = 4$, $\chi_f = 2$, $q(S)=0$ (\cite{Gong}, Section 5.1, Group (4), p.90). Hence
\begin{eqnarray} 
K_f^2 = 4 &=& c_1^2(S) + 8 \Rightarrow \nonumber \\
c_1^2(S) &=& -4 \\
\chi_f = 2 &=&  \chi(\mathcal{O}_S) + 1 \Rightarrow \nonumber \\
\chi(\mathcal{O}_S) &=& 1
\end{eqnarray}

Next, using the formulas $e(S) = 12\chi(\mathcal{O}_S) -  c_1^2(S)$ and $\sigma(S) = c_1^2(S) - 8\chi(\mathcal{O}_S)$, we compute $e(S) = 16$ and $\sigma(S) = -12$, and conclude that the total space of each of these $4$ fibrations is $\CP \# 13\CPb$.

\subsection{Genus two pencils in K3 surfaces} Our goal in this subsection is to present the proof of Lemma~\ref{pencils} while presenting some nice constructions of genus two pencils in K3 surfaces. Our discussion closely follows \cite{Kon}. 

In the final section of our paper, we will use these pencils to construct exotic minimal symplectic $4$-manifolds homeomorphic but not diffeomorphic to $3\CP \# k \CPb$ for $k=16, \cdots, 19$. Our discussion and notation closely follow \cite{Kon} and we refer the reader to \cite{Kon} for further details.

Let us consider the diagonal action of $PGL(2, \mathbb{C})$ on $({{\mathbb{CP}}^1})^{5}$. Let $\{P_1,\cdots,P_5\}$ be an ordered stable point in $({{\mathbb{CP}}^1})^{5}$ of this action. Such a point defines a homogeneous polynomial of degree $5$ in two variables, say $x_1$ and $x_2$. We will denote this polynomial by $f_5(x_1, x_2)$. Let $C$ be the plane quintic curve defined by the equation
\begin{equation}
{x_0}^5 = f_5(x_1, x_2) = \prod_{i=1}^{5} (x_1 - \lambda_ix_2)
\end{equation}
 Let us consider the following projective transformation of ${\mathbb{CP}}^{2}$:
\begin{equation}\label{projtran}
 g:(x_0 :x_1 :x_2) \longrightarrow (\zeta x_0 :x_1 :x_2)
\end{equation}
where $\zeta$ is a primitive $5$-th root of unity.

Notice that $g$ acts on $C$ as an automorphism of order $5$. We will denote by $E_0$ and $L_i$ $(1 \leq i \leq 5)$ the lines defined by the equations
\begin{equation}
E_0 :x_0 =0, L_i : x_1 = \lambda_ix_2.
\end{equation}

All $L_i$ are members of the pencil of lines through $(1:0:0)$ and $L_i$ meets $C$ at $(0:\lambda_i :1)$ with multiplicity $5$.

Let $S$ denote the minimal resolution of the double cover of $\CP$ branched along the sextic curve $E_0 + C$. It is easy to verify that $S$ is a $K3$ surface. Let us denote by $\tau$ the covering transformation of $S$. The projective transformation $g$ in (\ref{projtran}) induces an automorphism $\sigma$ of $S$ of order $5$. Let us denote by the same symbol $E_0$ the inverse image of the line $E_0$. 

Let us now consider the following two cases:

\begin{case} 
The equation $f_5 = 0$ has no multiple roots.
\end{case}

In this case the curves $E_i$ ($1 \leq i \leq 5$), which are obtained as exceptional curves of the minimal resolution of the oridinary double point singularities, corresponding to the intersection of $C$ and $E_0$, are all $(-2)$-curves. The inverse image of $L_i$ is the union of two smooth rational curves $F_i$, $G_i$ such that $F_i$ is tangent to $G_i$ at one point, and both $F_i$ and $G_i$ have self-intersection $-3$. Moreover, the following relations hold in the Picard group of $S$:
\begin{equation}
\begin{array}{l}

5E_0 =   \displaystyle\sum_{i=1}^{5}(F_{i} - 2E_{i}), \\
G_{i} + F_{i} = 2E_{0} + \displaystyle\sum_{j\neq 0, i}E_{j}  \\ 
\end{array}
\end{equation}

Let $p$ and $q$ be the inverse images of $(1 : 0 : 0)$. We may assume that all $F_i$ (resp. $G_i$) are passing through $p$ (resp. $q$). $\sigma$ preserves each curve $E_i$, $F_j$, $G_j$ ($0 \leq i \leq 5$, $1 \leq j \leq 5$) and $\tau$ preserves each $E_i$ and $\tau(F_i) = G_i$.


\begin{case} 
$f_5 = 0$ has a multiple root.

\end{case}

In this case the double cover $S$ has a rational double point of type $D_7$. Hence $S$ contains $7$ smooth rational curves $E_j'$ , ($1 \leq j \leq 7$) whose dual graph is of type $D_7$. We assume that $E_1'$ meets $E_0$, and $E_1' \cdot  E_2'=E_2' \cdot E_3'=E_3' \cdot E_4'=E_4' \cdot E_5'=E_5' \cdot E_6'=E_5' \cdot E_7'= 1$. If $\lambda_i$ is a multiple root, then $F_i$ and $G_i$ are disjoint and each of them meets one componet of $D_7$, for example, $F_i$ meets $E_6'$ and $G_i$ meets $E_7'$.

\subsubsection{A pencil of curves of genus two}\label{pencilK3} 
The pencil of lines on $\CP$ through\\ $(1 : 0 : 0)$ gives a pencil of curves of genus two on $S$. Each member of this pencil is invariant under the action of the automorphism $\sigma$ of order $5$. Hence a general member is a smooth curve of genus two with an automorphism of order five. Such a curve is unique up to isomorphism and is given by $y^2 = x(x^5 + 1)$ (see Bolza \cite{Bol}). If $\lambda_i$ is a simple root of the equation $f_5 = 0$, then the line $L_i$ defines a singular member of this pencil consisting of three smooth rational curves $E_i + F_i + G_i$. We call this singular member a singular member of type I. If $\lambda_i$ is a multiple root of $f_5 = 0$, then the line $L_i$ defines a singular fiber consisting of nine smooth rational curves $E_1', \cdots, E_7'$ , $F_i$, $G_i$. We call this a singular fiber of type $II$. The two points $p$ and $q$ are the base points of the pencil. After blowing up at $p$ and $q$, we have a base point free pencil of curves of genus two in $K3\# 2 \CPb$. The singular fibers of such pencils are completly classified by Namikawa and Ueno \cite{NamU}. The type I (resp. type II) corresponds to [IX-2] (resp. [IX-4] ) in \cite{NamU}. Consequently, we have the following lemma, which appeared in \cite{Kon}.

\begin{lemma}\label{pencils}\cite{Kon} The pencil of lines on $\CP$ through $(1 : 0 : 0)$ gives rise to a pencil of curves of genus two on $K3$ surface. A general member of this pencil is a smooth curve of genus two with an automorphism of order five. In case that $f_5 = 0$ has no multiple roots, it has five singular members of type $I$. In case that $f_5 = 0$ has a multiple root (resp. two multiple roots), it has three singular members of type $I$ and one singular member of type $II$ (resp. one of type $I$ and two of type $II$).
\end{lemma}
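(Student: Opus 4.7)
The plan is to proceed in three stages. First, I would establish that the pull-back of the pencil of lines through $(1:0:0)$ under the double cover $\pi: S \to \CP$ gives a pencil of genus-$2$ curves on $S$. For this, take a generic line $L$ in the pencil; by B\'ezout, $L$ meets $C$ in $5$ points and $E_0$ in $1$ point, and for $L$ not among the $L_i$ these $6$ points are distinct and transverse. Hence $\pi^{-1}(L) \to L$ is a double cover of $\mathbb{CP}^1$ branched at $6$ points, and the Riemann--Hurwitz formula gives $2g-2 = -4+6 = 2$, so $g=2$. The base locus of this pencil on $S$ consists of the two pre-images $\{p,q\}$ of $(1:0:0)$, and after blowing up $p$ and $q$ the pencil becomes base-point free on $K3\# 2\CPb$.

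Next, I would explain the order-$5$ automorphism. The projective transformation $g$ in (\ref{projtran}) fixes the center $(1:0:0)$, and every line through $(1:0:0)$ has the form $\{(t:a:b):t\in\mathbb{C}\}$ for fixed $(a:b)$; under $g$ such a line maps to itself, since $g:(t:a:b)\mapsto(\zeta t:a:b)$. Thus each line of the pencil is preserved set-wise, so $\sigma$ preserves every fiber of the pencil on $S$ and restricts to an order-$5$ automorphism on each smooth fiber. By Bolza's classification of the automorphism groups of genus-$2$ curves, a smooth genus-$2$ curve admitting an order-$5$ automorphism is unique up to isomorphism and is given by $y^2 = x(x^5+1)$.

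Finally, I would analyze the singular members, which arise exactly from the lines $L_i$, since these are the only lines in the pencil for which $L\cdot(E_0+C)$ fails to be $6$ transverse points: the entire intersection of $L_i$ with $C$ concentrates at the point $(0:\lambda_i:1)$ of $E_0$. For a simple root $\lambda_i$ of $f_5$, the curves $C$ and $E_0$ meet transversely at this point, the double cover develops an ordinary node resolved by the single $(-2)$-curve $E_i$, while locally on $L_i$ the branch divisor is a point of multiplicity $6$, so $\pi^{-1}(L_i)$ splits into two components $F_i,G_i$ meeting tangentially before resolution and becoming $(-3)$-curves each meeting $E_i$ transversely afterward. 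The resulting configuration $E_i+F_i+G_i$ is the type-I member and matches the Namikawa--Ueno entry [IX-2]. For a multiple root $\lambda_i$, $C$ is tangent to $E_0$ at $(0:\lambda_i:1)$ to higher order, the double cover acquires a rational double point whose minimal resolution contributes the seven $(-2)$-curves $E_1',\ldots,E_7'$ with a $D_7$-shaped dual graph; together with $F_i$ and $G_i$ (which are now disjoint and meet $E_6'$ and $E_7'$ respectively), this yields a nine-component type-II member, matching [IX-4]. Summing contributions over the roots of $f_5$, with each simple root contributing a type-I fiber and each multiple root contributing a type-II fiber, gives the three counts in the statement: $5$ type I in the generic case; $3$ type I plus $1$ type II when there is one multiple root; and $1$ type I plus $2$ type II when there are two multiple roots.

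The hardest part will be the local analytic computation near $(0:\lambda_i:1)$ in the multiple-root case: verifying rigorously that the singularity of the double cover is precisely of type $D_7$ (rather than some other ADE type), and that its minimal resolution inserts exactly the seven $(-2)$-curves with the stated dual graph and incidences to $F_i,G_i$. Matching each resulting dual graph with the specific Namikawa--Ueno entry [IX-2] or [IX-4] then reduces to a table lookup, provided all self-intersections and multiplicities have been recorded precisely. Once this local analysis is complete, the global enumeration of singular fibers (one per root of $f_5$, with type determined by the multiplicity of the root) is immediate.
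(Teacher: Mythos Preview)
Your proposal is correct and follows essentially the same approach as the paper: the paper's argument (contained in the discussion of \S\ref{pencilK3} immediately preceding the lemma, which it attributes to \cite{Kon}) likewise pulls back the pencil of lines through $(1:0:0)$, invokes the $\sigma$-invariance of each member and Bolza's classification for the generic fiber, and then reads off the singular members from the simple versus multiple roots of $f_5$, identifying them with Namikawa--Ueno types [IX-2] and [IX-4]. You supply more explicit detail (the Riemann--Hurwitz count for $g=2$, the verification that $g$ preserves each line set-wise, and the flagging of the $D_7$ local computation) than the paper, which simply asserts these facts, but the logical skeleton is identical.
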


\section{Nodal spherical deformation of singular fibers of Lefschetz fibrations}\label{Nodal} 

In what follows, we introduce a useful technique called \textit{$g$-nodal spherical deformation} which can be applied to singular fibers of a genus $g\geq 2$ Lefschetz fibration over $\mathbb{S}^2$, and prove a few lemmas that will be used in the course of the proofs of our main theorems. For the sake of convenience, let us first recall some fundamental definitions and facts concerning the Lefschetz fibrations, and list some examples of Lefschetz fibrations for which our nodal spherical defomation technique will be applied.

\begin{definition}\label{LF}\rm
Let $X$ be a closed, oriented smooth $4$-manifold. A smooth map $f : X \rightarrow \mathbb{S}^2$ is a  \textit{genus-$g$ Lefschetz fibration} if it satisfies the following conditions: \\
(i) $f$ has finitely many critical values $b_1,\ldots,b_m \in \mathbb{S}^2$, and $f$ is a smooth $\Sigma_g$-bundle over $\mathbb{S}^2-\{b_1,\ldots,b_m\}$, \\
(ii) for each $i$ $(i=1,\ldots,m)$, there exists a unique critical point $p_i$ in the \textit{singular fiber} $f^{-1}(b_i)$ such that about each $p_i$ and $b_i$ there are local complex coordinate charts agreeing with the orientations of $X$ and $\mathbb{S}^2$, on which $f$ is of the form $f(z_{1},z_{2})=z_{1}^{2}+z_{2}^{2}$, \\
(i\hspace{-.1em}i\hspace{-.1em}i) $f$ is relatively minimal (i.e. no fiber contains a $(-1)$-sphere)
\end{definition}

Each singular fiber of a Lefschetz fibration is an immersed surface with a single transverse self-intersection, and it is obtained by collapsing a simple closed curve (the \textit{vanishing cycle}) in the regular fiber. If the curve is nonseparating, then the singular fiber is called \textit{nonseparating}, otherwise it is called \textit{separating}. Moreover, a singular fiber can be described by its monodromy, i.e, by a right handed Dehn twist along the corresponding vanishing cycle. 

Let us now recall two well-known familes of hyperelliptic Lefschetz fibrations, which will serve as building blocks in our constructions of new Lefschetz fibrations, and for which our nodal defomation technique will be applied. Let $a_1$, $a_2$, $\cdots$, $a_{2g}$, and $a_{2g+1}$ denote the collection of standard simple closed curves in $\Sigma_g$, which is shown as in Figure~\ref{inv}. We will slightly abuse the notation and refer to the right handed Dehn twists $t_{a_i}$ along the curve $a_i$ also with the same letter $a_{i}$. It is well-known that the following two relations hold in the mapping class group $\Gamma_g$:  

\begin{equation}
\begin{array}{l}

H(g) = (a_1a_2 \cdots a_{2g-1}a_{2g}{a_{2g+1}}^2a_{2g}a_{2g-1} \cdots a_2a_1)^2 = 1,  \\
I(g) = (a_1a_2 \cdots a_{2g}a_{2g+1})^{2g+2} = 1,  \\ 
\end{array}
\end{equation}

\begin{figure}[htb]
\begin{center}
\scalebox{0.58}{\includegraphics{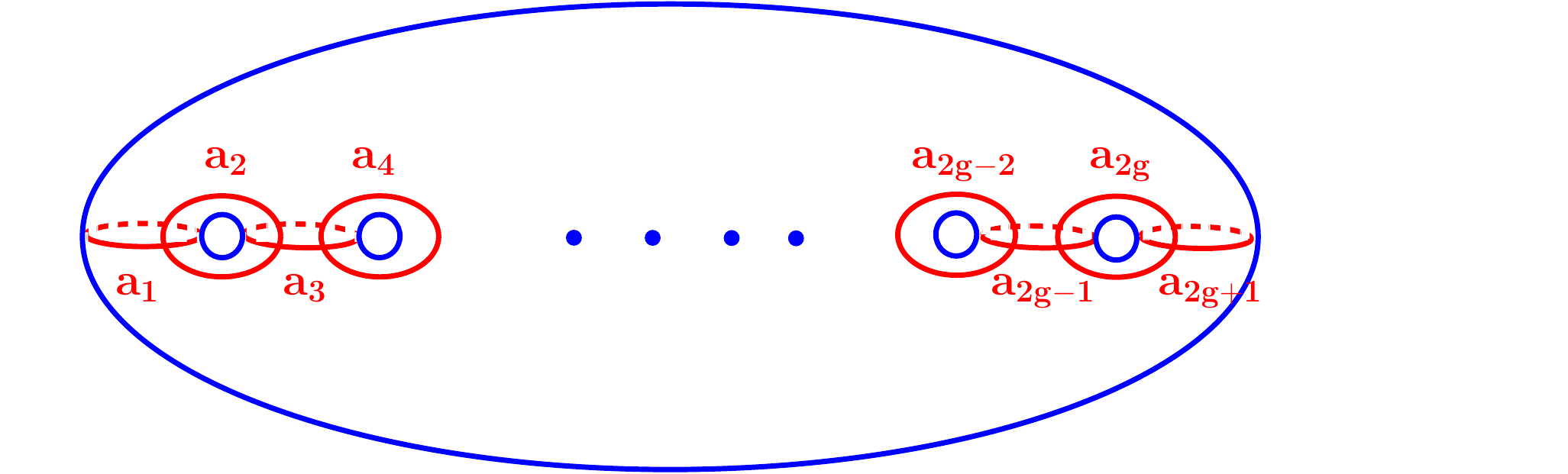}}
\caption{}
\label{inv}
\end{center}
\end{figure}

Let $X(g)$ and $Y(g)$ denote the total spaces of the above two genus $g$ hyperelliptic Lefschetz fibrations over $\mathbb{S}^2$ given by monodromies $H(g) = 1$, and $I(g) = 1$, respectively, in the mapping class group $\Gamma_g$. The first monodromy relation corresponds to genus $g$ Lefschetz fibrations over $\mathbb{S}^2$ with the total space $X(g) = \CP\#(4g+5)\CPb$, the complex projective plane blown up at $4g+5$ points. In the case of the second relation, the total spaces of corresponding genus $g$ Lefschetz fibrations over $\mathbb{S}^2$ are also well-known families of complex surfaces. For example, $Y(2) = K3\#2\CPb$, and the Lefschetz fibration structure given  arises from a well-known pencil in the $K3$ surface with two base points (see for example references \cite{F2, A2}). 

\begin{lemma} Let $f : X \rightarrow \mathbb{D}^2$ denote a Lefschetz fibration with $k$ singular fibers and the monodromy $W = D_{\gamma_1}D_{\gamma_2} \cdots D_{\gamma_k}$ in $\Gamma_g$. Assume that $k \geq g$ and the word $W$ contains a subword $W'$, which consists of a product of $g$ Dehn twists along disjoint nonseparating vanishing cycles. Then the fibration can be deformed so that it contains a spherical $g$-nodal singular fiber.
\end{lemma}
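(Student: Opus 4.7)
The core principle is that Dehn twists along disjoint simple closed curves commute in $\Gamma_g$, and this algebraic commutation is realized geometrically by the freedom to collide the critical values of a Lefschetz fibration whenever the associated vanishing cycles are disjoint. I would carry out the argument in three steps.

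First, I would use Hurwitz moves together with the commutation relations $D_{\gamma_i}D_{\gamma_j}=D_{\gamma_j}D_{\gamma_i}$, valid whenever $\gamma_i$ and $\gamma_j$ are disjoint, to rearrange the monodromy factorization $W$ so that the $g$ Dehn twists comprising $W'$ appear as a single consecutive block and their $g$ critical values lie inside a small sub-disk $\mathbb{D}'\subset \mathbb{D}^2$. The monodromy along $\partial\mathbb{D}'$ is then $D_{\gamma_1}D_{\gamma_2}\cdots D_{\gamma_g}$, and the restricted fibration $f|_{f^{-1}(\mathbb{D}')}$ carries all the geometric content we wish to deform, while the complement $f|_{f^{-1}(\mathbb{D}^2\setminus\mathbb{D}')}$ will remain untouched.

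Second, because the vanishing cycles $\gamma_1,\ldots,\gamma_g$ can be taken to lie in pairwise disjoint annular neighborhoods of a reference fiber $\Sigma_g$, the $g$ standard local Lefschetz models $z_1^2+z_2^2=t-t_j$ are supported in $g$ disjoint coordinate neighborhoods of the total space. I would then deform the restricted fibration over $\mathbb{D}'$ by driving the parameters $t_1,\ldots,t_g$ simultaneously to a single point $t=0$, producing a one-parameter family of holomorphic fibrations whose limiting member has a single critical value and whose central fiber is obtained from a nearby smooth $\Sigma_g$ by simultaneously pinching $\gamma_1,\ldots,\gamma_g$ each to a point. Since the boundary monodromy $\prod_i D_{\gamma_i}$ along $\partial\mathbb{D}'$ is preserved throughout, the deformed piece glues back to the untouched part $f|_{f^{-1}(\mathbb{D}^2\setminus\mathbb{D}')}$ to yield a globally deformed Lefschetz fibration.

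Third, I would identify the central fiber topologically. In the setting of the paper the $g$ disjoint nonseparating curves form a cut system on $\Sigma_g$, so cutting along them yields a connected genus-$0$ surface with $2g$ boundary components. Reidentifying, for each $i$, the pair of boundary circles produced by $\gamma_i$ to a single point yields a $2$-sphere with $g$ ordinary double points; the Euler-characteristic check $\chi=(2-2g)+g=2-g$ matches that of a $g$-nodal sphere, so the fiber produced in Step~2 is precisely a spherical $g$-nodal singular fiber.

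\emph{Main obstacle.} The delicate point is Step~2: producing an actual one-parameter deformation of the fibration over $\mathbb{D}'$ that synchronously smooths $g$ separate Lefschetz singularities into a single $g$-nodal central fiber. The key input is the versal deformation of a stable curve with $g$ nodes, restricted to the one-dimensional subfamily along which all $g$ nodes are smoothed with a common smoothing parameter. The disjointness of the vanishing cycles is exactly what guarantees that these $g$ smoothings are independent and can be synchronized without obstruction, and that the boundary monodromy on $\partial\mathbb{D}'$ is preserved throughout the family, so the glueing in Step~2 is automatic.
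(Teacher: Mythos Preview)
Your proposal is correct and follows the same idea as the paper's own proof, which is a single sentence: since the $g$ vanishing cycles in $W'$ are disjoint and nonseparating, one lets the corresponding critical values coalesce to obtain a $g$-nodal spherical fiber. Your Steps~1--3 spell this out far more carefully than the paper does; two minor remarks are that Step~1 is unnecessary (the hypothesis already says $W'$ is a \emph{subword}, so the factors are consecutive), and your observation in Step~3 that the $g$ curves must form a cut system for the result to be a single nodal \emph{sphere} is a genuine extra hypothesis that the paper leaves implicit but which is satisfied in all of its applications.
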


\begin{proof} Using the word $W= \cdots W' \cdots $ and deforming $g$ homologically essential curves corresponding to the nonseparating vanishing cycles on the genus $g$ surface corresponding to the subword $W'$, we obtain a spherical $g$-nodal singular fiber. 
\end{proof}

\begin{figure}
\begin{center}
\scalebox{0.58}{\includegraphics{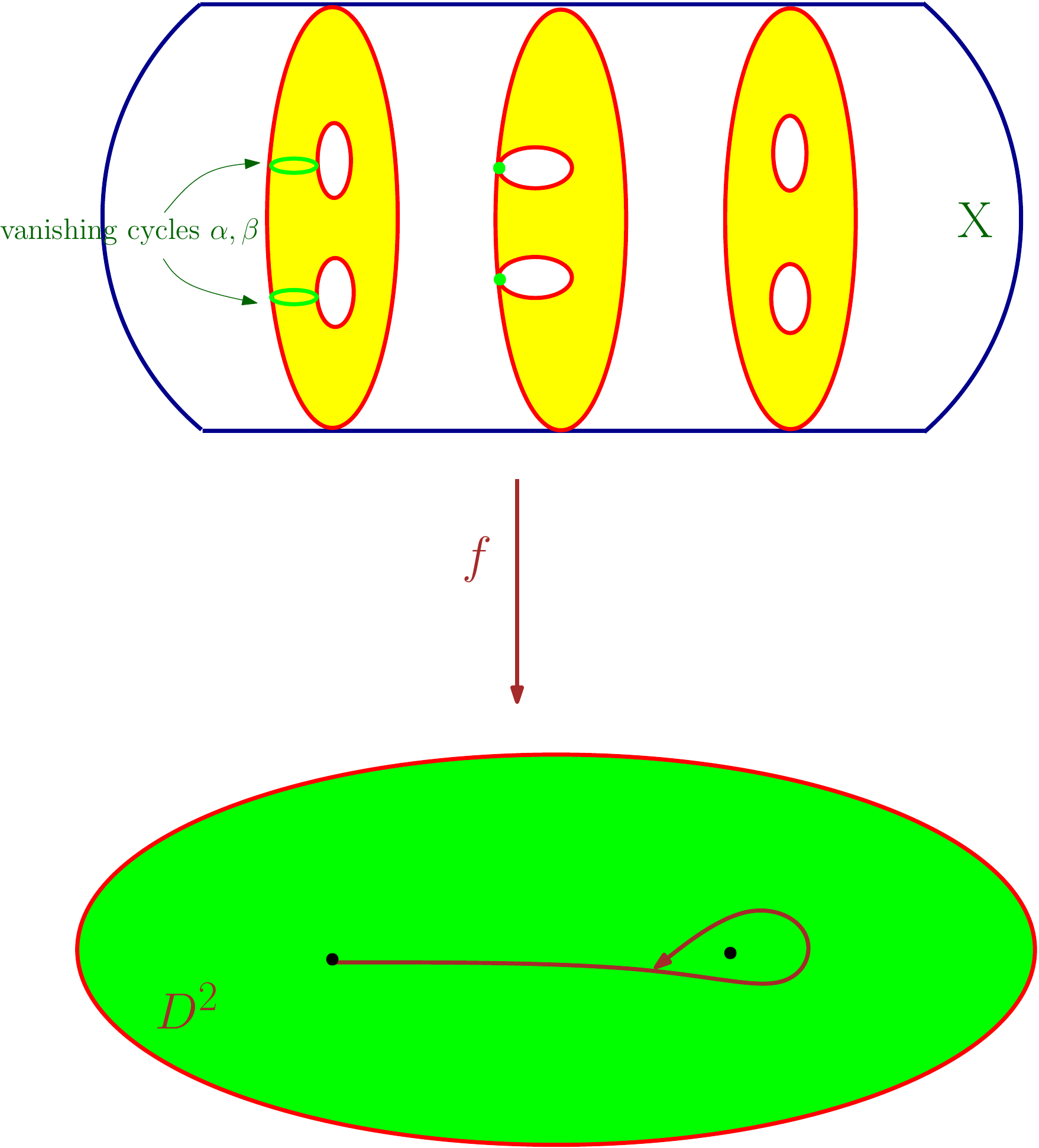}}
\caption{}
\label{dttttt}
\end{center}
\end{figure}

We will now show that the above lemma is applicable to Lefschetz fibration $H(g) = (a_1a_2 \cdots a_{2g-1}a_{2g}{a_{2g+1}}^2a_{2g}a_{2g-1} \cdots a_2a_1)^2 = 1$, and $I(g)= \\
(a_1a_2 \cdots a_{2g}a_{2g+1})^{2g+2} = 1$, when $g = 2$ and using the monodromies $W_1 = a_1a_2a_3a_4$ and $W_2 = a_1a_2a_3a_4a_5$ respectively. We also discuss the cases of $g \geq 3$, but their applications will be studied in a separate paper. 

Before stating the next lemma, let us recall a well-known fact that the conjugate of a Dehn twist is again a Dehn twist: if $f: \Sigma_{g} \rightarrow \Sigma_{g}$ is an orientation-preserving diffeomorphism, then $f \circ t_\alpha \circ f^{-1} = t_{f(\alpha)}$. We will make use of it repeatedly in our computation below.

\begin{lemma}\label{f1} Let $f_{1} : X \rightarrow \mathbb{D}^2$ denote a Lefschetz fibration given by the monodromy $(a_1a_2a_3a_4)$ in $\Gamma_2$. Then it can be deformed to contain two disjoint spherical $2$-nodal singular fibers given by the word below

\begin{equation}
(a_1a_2a_3a_4) = (a_4^{-1}a_1a_3a_4)(a_4^{-1}a_3^{-1}a_2a_4a_3a_4)
\end{equation}  
\end{lemma}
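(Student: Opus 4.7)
The plan is threefold: (i) verify the displayed factorization as a relation in $\Gamma_2$; (ii) rewrite each of the two parenthesized factors as a product of two commuting Dehn twists whose vanishing cycles form a \emph{cut system} on $\Sigma_2$, i.e.\ a pair of disjoint nonseparating simple closed curves whose complement is a $4$-punctured sphere; and (iii) invoke the preceding deformation lemma separately on each factor to produce two disjoint spherical $2$-nodal singular fibers, realized over disjoint subdisks of $\mathbb{D}^2$.

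For (i), I would use only the commutation relations $a_1 a_4 = a_4 a_1$ and $a_2 a_4 = a_4 a_2$ (since $a_1, a_2$ are each disjoint from $a_4$) together with the cancellation $a_3 a_3^{-1} = 1$. Concatenating the two factors kills the middle pair $a_4 \cdot a_4^{-1}$, leaving $a_4^{-1} a_1 a_2 a_4 a_3 a_4$; then pulling $a_4^{-1}$ past $a_1 a_2$ and cancelling it against the next $a_4$ recovers $a_1 a_2 a_3 a_4$.

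For (ii), consider first $a_4^{-1} a_1 a_3 a_4$. Since $a_1$ commutes with $a_4^{\pm 1}$, this equals $t_{a_1}\cdot (a_4^{-1} a_3 a_4) = t_{a_1}\cdot t_\delta$, where $\delta := t_{a_4}^{-1}(a_3)$. I would verify $a_1 \cap \delta = \emptyset$ by noting that $t_{a_1}$ commutes with both $t_{a_3}$ and $t_{a_4}$, so $t_{a_1}(\delta) = t_{a_4}^{-1} t_{a_1}(a_3) = \delta$. In homology $[\delta] = [a_3] - [a_4]$, independent of $[a_1]$, confirming that $\{a_1,\delta\}$ is a cut system. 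For the second factor, I would use $a_2 a_4 = a_4 a_2$ and the braid relation $a_3 a_4 a_3 = a_4 a_3 a_4$ in the form $a_3^{-1} a_4 = a_4 a_3 a_4^{-1} a_3^{-1}$ to rewrite $a_4^{-1} a_3^{-1} a_2 a_4 a_3 a_4$ as $t_{a_3}\cdot \bigl(a_4^{-1}(a_3^{-1} a_2 a_3) a_4\bigr) = t_{a_3}\cdot t_\gamma$, where $\gamma := t_{a_4}^{-1} t_{a_3}^{-1}(a_2)$. The crucial verification is that $a_3 \cap \gamma = \emptyset$: the inverted braid identity $t_{a_3} t_{a_4}^{-1} t_{a_3}^{-1} = t_{a_4}^{-1} t_{a_3}^{-1} t_{a_4}$ combined with $t_{a_4}(a_2) = a_2$ gives $t_{a_3}(\gamma) = t_{a_4}^{-1} t_{a_3}^{-1} t_{a_4}(a_2) = \gamma$, so $\gamma$ is fixed by $t_{a_3}$ and therefore disjoint from $a_3$. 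The homology class $[\gamma] = [a_2] - [a_3] + [a_4]$ is independent of $[a_3]$, so $\{a_3,\gamma\}$ is a cut system as well.

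With (i) and (ii) in hand, (iii) is immediate from the preceding $g$-nodal deformation lemma applied to each factor. The hard part is the second factor: one must choose the braid manipulations so that, after isolating $t_{a_3}$ on the left, the remaining conjugate of $t_{a_2}$ collapses to a single Dehn twist along a curve that happens to be fixed by $t_{a_3}$. The route above---commuting $a_2$ past $a_4$, applying the braid relation once, extracting $t_{a_3}$, and then invoking the inverted braid identity on the residual conjugate of $t_{a_2}$---appears to be the most streamlined path, and it makes the disjointness check a one-line calculation rather than a direct geometric inspection.
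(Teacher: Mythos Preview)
Your proposal is correct and follows essentially the same approach as the paper. The paper derives the displayed identity by a forward chain of Hurwitz moves and braid relations, then appeals to a figure to assert that each parenthesized factor yields a $2$-nodal spherical fiber; you instead verify the identity by direct multiplication (this is exactly the shortcut the paper records in the Remark immediately following its proof) and supply an explicit algebraic check---via $t_c(\gamma)=\gamma \Rightarrow i(c,\gamma)=0$---that each factor is a product of two Dehn twists along a cut system. One small simplification you might note: the second factor is already visibly a conjugate $(a_3a_4)^{-1}(a_2a_4)(a_3a_4)$, so the cut-system property for $\{t_{(a_3a_4)^{-1}}(a_2),\,t_{(a_3a_4)^{-1}}(a_4)\}$ follows immediately from that of $\{a_2,a_4\}$ without invoking the inverted braid identity.
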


\begin{proof}
At first, for a word $a_1 a_2 \cdots a_n$ in the $\Gamma_g$, by Hurwitz moves we mean either one of the following two equalities:
\begin{eqnarray}
a_1a_2 \cdots a_{i}a_{i+1}\cdots a_n &=& a_1a_2\cdots(a_{i}a_{i+1}a_{i}^{-1})(a_{i})\cdots a_n\\
a_1a_2 \cdots a_{i}a_{i+1}\cdots a_n &=&a_1a_2\cdots(a_{i+1})(a_{i+1}^{-1}a_{i}a_{i+1}) \cdots 
a_n. 
\end{eqnarray}
By applying these moves, the braid relation $a_ia_{i+1}a_i= a_{i+1}a_ia_{i+1}$, and the commutativity relation of disjoint curves, we compute
\begin{eqnarray*}
a_1a_2a_3a_4&=&a_1a_2(a_3a_4a_3^{-1})a_3\\
&=& a_1a_2(a_4^{-1}a_3a_4)a_3\\
&=&a_4^{-1}a_1a_2a_3a_4a_3\\
&=&a_4^{-1}a_1a_3(a_3^{-1}a_2a_3)a_4a_3\\
&=&a_4^{-1}a_1a_3(a_2a_3a_2^{-1})a_4a_3\\
&=&a_4^{-1}a_1a_3a_2a_3a_4a_2^{-1}a_3\\ 
&=&a_4^{-1}a_1a_3a_2(a_4)(a_4^{-1}a_3a_4)a_2^{-1}a_3\\
&=&(a_4^{-1}a_1a_3a_4)(a_4^{-1}a_2a_3a_4a_2^{-1}a_3)\\
&=&(a_4^{-1}a_1a_3a_4)(a_4^{-1}a_2a_3a_2^{-1}a_4a_3)\\
&=&(a_4^{-1}a_1a_3a_4)(a_4^{-1}a_3^{-1}a_2a_3a_4a_3)\\
&=&(a_4^{-1}a_1a_3a_4)(a_4^{-1}a_3^{-1}a_2a_4a_3a_4).
\end{eqnarray*}

Geometrically we can view the above process as in Figure \ref{deform1}. The resulting two singular fibers corresponding to $(a_4^{-1}a_1a_3a_4)$ and $(a_4^{-1}a_3^{-1}a_2a_4a_3a_4)$, are two disjoint spherical fibers with $2$ nodes on each.   

\end{proof}

\begin{remark} Alternatively, by introducing a pair  $a_{4}^{-1}a_{4}$, commuting $a_{4}^{-1}$ with $a_{2}$ and then with $a_{1}$ since these cycles are disjoint, and introducing $a_{3}a_{4}a_{4}^{-1}a_{3}^{-1}$ after $a_{1}$ term one can obtain a proof of the above lemma without using the braid relations. 
\begin{eqnarray*}
a_1a_2a_3a_4&=&a_{1}a_{2} a_{4}^{-1}a_{4} a_{3}a_{4}\\
&=& a_{4}^{-1} a_{1}a_{2} a_{4}a_{3}a_{4}\\
&=&a_{4}^{-1}a_{1} a_{3}a_{4}a_{4}^{-1}a_{3}^{-1} a_{2}a_{4}a_{3}a_{4}\\
\end{eqnarray*}
We thank the referee for making this remark.
\end{remark}

\begin{figure}
\begin{center}
\scalebox{0.80}{\includegraphics{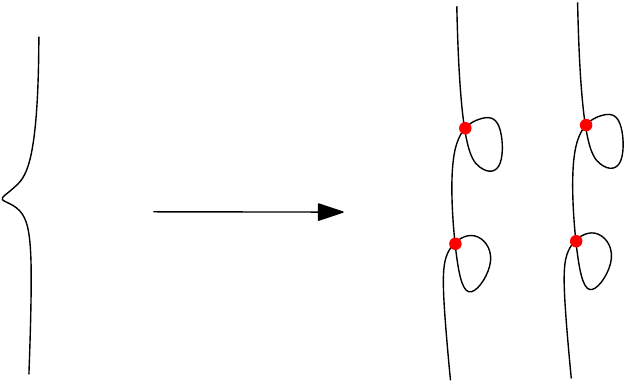}}\caption{Deforming $(2,5)$ cusp into two disjoint $2$-nodal fibers}
\label{deform1}
\end{center}
\end{figure}

\begin{lemma}\label{f2} Let $f_{2} : X \rightarrow \mathbb{D}^2$ denote a Lefschetz fibration given by the monodromy $(a_1a_2a_3a_4a_5)$ in $\Gamma_2$. Then it can be deformed to contain two disjoint spherical $2$-nodal singular fibers given by the word below

\begin{equation}
(a_1a_2a_3a_4a_5) = (a_1a_4)(a_2a_5)a_5^{-1}a_{3}a_4a_3^{-1}a_5.
\end{equation}  
\end{lemma}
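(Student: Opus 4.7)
The plan is to verify the stated word identity in $\Gamma_{2}$ by a short sequence of elementary manipulations---insertion of a canceling pair $tt^{-1}$, the braid relation, and commutativity of disjoint Dehn twists---and then invoke the first lemma of Section \ref{Nodal} to convert each of the two leading pairs into a spherical $2$-nodal fiber. This is structurally the same strategy used in the proof of Lemma \ref{f1}.

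Concretely, starting from $a_{1}a_{2}a_{3}a_{4}a_{5}$, I would first insert the canceling pair $a_{3}a_{3}^{-1}$ after $a_{4}$ to obtain $a_{1}a_{2}(a_{3}a_{4}a_{3})a_{3}^{-1}a_{5}$, and then apply the braid relation $a_{3}a_{4}a_{3}=a_{4}a_{3}a_{4}$ (valid because $a_{3}$ and $a_{4}$ intersect transversely in a single point) to rewrite the bracketed triple as
\begin{equation*}
a_{1}a_{2}\,a_{4}a_{3}a_{4}\,a_{3}^{-1}a_{5}.
\end{equation*}
Commuting the disjoint twists $a_{2}$ and $a_{4}$ past one another yields $a_{1}a_{4}a_{2}a_{3}a_{4}a_{3}^{-1}a_{5}$. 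Finally, inserting $a_{5}a_{5}^{-1}$ between $a_{2}$ and $a_{3}$ and using that $a_{1}$ and $a_{2}$ are both disjoint from $a_{5}$ (so they commute with it), one arrives at
\begin{equation*}
(a_{1}a_{4})(a_{2}a_{5})\,a_{5}^{-1}a_{3}a_{4}a_{3}^{-1}a_{5},
\end{equation*}
which is the claimed factorization.

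For the geometric conclusion I would observe that on $\Sigma_{2}$ each of the pairs $\{a_{1},a_{4}\}$ and $\{a_{2},a_{5}\}$ consists of two disjoint nonseparating simple closed curves (see Figure \ref{inv}). Hence each leading subword $(a_{1}a_{4})$ and $(a_{2}a_{5})$ is a product of two commuting Dehn twists along disjoint nonseparating vanishing cycles, so the first lemma of Section \ref{Nodal}, applied with $g=2$, permits a deformation of the fibration replacing the two corresponding pairs of Lefschetz critical values by two disjoint spherical $2$-nodal singular fibers. The residual factor $a_{5}^{-1}a_{3}a_{4}a_{3}^{-1}a_{5}$ is a conjugate of a single positive Dehn twist (namely along $t_{a_{3}}(t_{a_{5}}^{-1}(a_{4}))$), and so contributes one ordinary Lefschetz critical value, which accounts for the missing fifth node.

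The only obstacle I foresee is bookkeeping: at each step one must correctly track which of the standard curves $a_{1},\dots,a_{5}$ on $\Sigma_{2}$ meet in one point (adjacent indices, forcing the braid relation) and which are disjoint (non-adjacent indices, giving commutativity). Apart from this, the derivation is essentially forced by the target factorization, and---as noted in the remark following Lemma \ref{f1}---the whole argument can in fact be phrased using only insertions of canceling pairs together with the commutativity relations, avoiding the braid identity altogether.
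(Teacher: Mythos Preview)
Your argument is correct and follows essentially the same route as the paper's proof: a short manipulation using an inserted canceling pair, one application of the braid relation $a_{3}a_{4}a_{3}=a_{4}a_{3}a_{4}$, and the commutativity of $a_{2}$ with $a_{4}$, followed by inserting $a_{5}a_{5}^{-1}$ to isolate the two $2$-nodal subwords. The only cosmetic difference is that the paper inserts $a_{4}a_{4}^{-1}$ before $a_{3}$ (and rewrites $a_{4}^{-1}a_{3}a_{4}=a_{3}a_{4}a_{3}^{-1}$) while you insert $a_{3}a_{3}^{-1}$ after $a_{4}$; both land on $a_{1}a_{2}a_{4}a_{3}a_{4}a_{3}^{-1}a_{5}$ after one braid move, and your identification of the residual factor as a single Dehn twist matches the paper's description of a Lefschetz type nodal fiber.
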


\begin{proof} By applying Hurwitz moves, the braid relation $a_ia_{i+1}a_i= a_{i+1}a_ia_{i+1}$, and the commutativity relation for disjoint curves, we compute
\begin{eqnarray*}
a_1a_2a_3a_4a_5&=&a_1a_2(a_4a_4^{-1})a_3a_4a_5\\
&=& a_1a_2a_4(a_3a_4a_3^{-1})a_5\\
&=&a_1a_4a_2a_3a_4a_3^{-1}a_5\\
&=&a_1a_4a_2a_5a_5^{-1}a_{3}a_4a_3^{-1}a_5.
\end{eqnarray*}

Geometrically we may sketch this as in Figure \ref{deform1}. The resulting two singular fibers corresponding to $a_1a_4$ and $a_2a_5$, are two disjoint spherical fibers with $2$ nodes on each, and the third singular fiber corresponding to $a_5^{-1}a_{3}a_4a_3^{-1}a_5$ is the Lefschetz type nodal fiber. \end{proof}

\begin{remark} One can also prove Lemma~\ref{f2}, using Lemma~\ref{f1}. This can be achieved by adding  $a_5$ to the deformed monodromy given in Lemma~\ref{f1}. This obviously will yield to a different monodromy decompositions. We thank the referee for making this observation.
\end{remark}

Now we apply our technique to a genus three Lefschetz fibration.

\begin{lemma}\label{f3} Let $f_{3} : X \rightarrow \mathbb{D}^2$ denote a Lefschetz fibration given by the monodromy 
$(a_1a_2a_3a_4a_5a_6a_7)$ in $\Gamma_3$. Then it can be deformed to contain two disjoint spherical $3$-nodal singular fibers given by the word below
\begin{equation*}
a_1a_2a_3a_4a_5a_6a_7 = 
a_1a_2a_1^{-1} (a_1a_3a_5)  (a_1a_2a_1^{-1})^{-1}(a_1a_2a_1^{-1}) (a_5^{-1}a_4a_5) a_7a_7^{-1}a_6a_7.
\end{equation*}  
\end{lemma}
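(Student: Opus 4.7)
The strategy mirrors that of Lemmas~\ref{f1} and~\ref{f2}: rewrite the monodromy using Hurwitz moves together with the braid and commutativity relations in $\Gamma_3$ in order to group disjoint Dehn twists into factors that each represent a spherical $3$-nodal singular fiber. First, I would verify the stated equality as an algebraic identity in $\Gamma_3$. The two cancellations $(a_1 a_2 a_1^{-1})^{-1}(a_1 a_2 a_1^{-1}) = 1$ and $a_7 a_7^{-1} = 1$ reduce the right-hand side to $(a_1 a_2 a_1^{-1})(a_1 a_3 a_5)(a_5^{-1} a_4 a_5)(a_6 a_7)$. Using that $a_1, a_3, a_5$ are pairwise disjoint (so $a_5^{-1}$ commutes with $a_1$ and $a_3$), the subword $(a_1 a_3 a_5)(a_5^{-1} a_4 a_5)$ collapses to $a_1 a_3 a_4 a_5$, and finally $(a_1 a_2 a_1^{-1}) \cdot a_1 = a_1 a_2$ returns the original monodromy $a_1 a_2 a_3 a_4 a_5 a_6 a_7$.

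Second, I would interpret this rewriting as a sequence of elementary Hurwitz moves performed on the Lefschetz fibration $f_3$. Inserting trivial pairs of Dehn twists (such as $a_1^{-1} a_1$ and $a_7^{-1} a_7$) and commuting twists whose supports lie in disjoint subsurfaces are the only operations needed; these are the admissible manipulations of a positive factorization, exactly as in the proofs of Lemmas~\ref{f1} and~\ref{f2}. After the rearrangement, the seven positive Dehn twists group as a first block $a_1 a_3 a_5$, a second block $(a_1 a_2 a_1^{-1})(a_5^{-1} a_4 a_5)(a_7^{-1} a_6 a_7)$, and a single surviving twist $a_7$ corresponding to an ordinary Lefschetz nodal fiber.

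Third, I would identify each block as a spherical $3$-nodal singular fiber via the general $g$-nodal deformation lemma of Section~\ref{Nodal} with $g=3$. The block $a_1 a_3 a_5$ is a product of three Dehn twists along the pairwise disjoint nonseparating curves $a_1, a_3, a_5$, and hence represents one spherical $3$-nodal fiber. The block $(a_1 a_2 a_1^{-1})(a_5^{-1} a_4 a_5)(a_7^{-1} a_6 a_7)$ consists of Dehn twists along the curves $t_{a_1}(a_2)$, $t_{a_5}^{-1}(a_4)$, and $t_{a_7}^{-1}(a_6)$; these are supported in tubular neighborhoods of the disjoint pairs $\{a_1, a_2\}$, $\{a_4, a_5\}$, $\{a_6, a_7\}$, so they are pairwise disjoint and nonseparating, and the block represents the second spherical $3$-nodal fiber.

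The main obstacle I anticipate is producing the explicit sequence of Hurwitz moves that realizes the precise factorization written in the lemma, rather than a merely Hurwitz-equivalent one: the stated decomposition places the conjugators $a_1 a_2 a_1^{-1}$ and the cancelling pair $a_7 a_7^{-1}$ in specific positions, and checking that the commutations needed to bring the second $3$-nodal block into the claimed form are genuinely available (without invoking hidden braid moves that would alter the vanishing cycles) requires a careful, step-by-step verification of the disjointness of the relevant subsurface supports in $\Sigma_3$.
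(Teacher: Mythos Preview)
Your approach is essentially the paper's: insert trivial pairs, commute disjoint twists, and then read off two triples of pairwise disjoint nonseparating vanishing cycles plus one leftover nodal factor. The algebraic verification you give is correct and matches the paper's.

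There is one discrepancy in the geometric interpretation. Reading the displayed word as a \emph{contiguous} positive factorization, the paper brackets it as
\[
\bigl[\phi\,(a_1a_3a_5)\,\phi^{-1}\bigr]\ \bigl[\phi\,(a_5^{-1}a_4a_5)\,a_7\bigr]\ \bigl[a_7^{-1}a_6a_7\bigr],\qquad \phi:=a_1a_2a_1^{-1},
\]
so the first $3$-nodal fiber is the conjugate $\phi(a_1a_3a_5)\phi^{-1}$ (three disjoint curves, being the images of $a_1,a_3,a_5$ under a single diffeomorphism), the second $3$-nodal fiber has vanishing cycles $a_1(a_2),\ a_5^{-1}(a_4),\ a_7$, and the leftover Lefschetz nodal fiber is $a_7^{-1}a_6a_7$. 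Your grouping instead takes $a_1a_3a_5$ (unconjugated) as the first block, $(a_1a_2a_1^{-1})(a_5^{-1}a_4a_5)(a_7^{-1}a_6a_7)$ as the second, and $a_7$ as the leftover. Those triples are also pairwise disjoint, but they are not contiguous subwords of the factorization as written: the factor $\phi$ precedes $a_1a_3a_5$, and the factor $a_7$ sits between $(a_5^{-1}a_4a_5)$ and $(a_7^{-1}a_6a_7)$. Your arrangement is Hurwitz equivalent, but it is a different positive factorization from the one displayed in the lemma; the intended reading is the paper's bracketing above.
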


\begin{proof} By applying Hurwitz moves, we compute
\begin{align*}
a_1a_2a_3a_4a_5a_6a_7= a_1a_2{a_1}^{-1}a_1a_3a_5{a_5}^{-1}a_4a_5a_6a_7 = \\
= [a_1a_2a_1^{-1} (a_1a_3a_5)  (a_1a_2a_1^{-1})^{-1}]  [(a_1a_2a_1^{-1}) (a_5^{-1}a_4a_5) a_7][a_7^{-1}a_6a_7]
\end{align*}

\noindent The resulting two singular fibers corresponding to $[a_1a_2a_1^{-1} (a_1a_3a_5)  (a_1a_2a_1^{-1})^{-1}] $ and $[(a_1a_2a_1^{-1}) (a_5^{-1}a_4a_5) a_7]$ are two disjoint spherical fibers with $3$ nodes on each, and the third singular fiber corresponding to $a_7^{-1}a_6a_7$ is a Lefschetz type nodal fiber.\end{proof}

\begin{lemma} Let $f_{4} : X \rightarrow \mathbb{D}^2$ and $f_{5} : X \rightarrow \mathbb{D}^2$ denote the Lefschetz fibrations given by the relations $(a_1a_2a_3a_4a_5\cdots a_{2g-1}a_{2g})$ and  $(a_1a_2a_3a_4a_5\cdots a_{2g-1}a_{2g} a_{2g+1})$ in $\Gamma_g$. Then they both can be deformed to contain a spherical $g$-nodal singular fiber given by the word below

\begin{equation}
(a_1a_2a_3a_4a_5 \cdots a_{2g-1}a_{2g}) = (a_1a_3a_5 \cdots a_{2g-1}) W''.
\end{equation}  

\begin{equation}
(a_1a_2a_3a_4a_5 \cdots a_{2g-1}a_{2g}a_{2g+1}) = (a_1a_3a_5 \cdots a_{2g-1}) W'''.
\end{equation} 

\end{lemma}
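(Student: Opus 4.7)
The plan is to extend the pattern established in Lemmas~\ref{f1}--\ref{f3} to arbitrary genus $g \geq 2$, exploiting the fact that the odd-indexed standard curves $a_1, a_3, a_5, \ldots, a_{2g-1}$ on $\Sigma_g$ are pairwise disjoint nonseparating simple closed curves. Their right-handed Dehn twists therefore commute pairwise, and by the opening lemma of Section~\ref{Nodal} the product $a_1 a_3 a_5 \cdots a_{2g-1}$ occurring in the monodromy corresponds to a singular fiber that can be deformed into a spherical $g$-nodal singular fiber. Thus the crux of the proof is not the deformation step itself, but the algebraic rearrangement of the monodromy word that exhibits such a subword.

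For $f_4$ I would rewrite $a_1 a_2 \cdots a_{2g}$ as $(a_1 a_3 a_5 \cdots a_{2g-1}) W''$ (up to conjugation of the collected twists) by inserting cancellation pairs $a_j^{\pm 1} a_j^{\mp 1}$ at strategic positions and then invoking the commutativity of Dehn twists along disjoint curves, precisely as in the proof of Lemma~\ref{f3}. Concretely, one inserts $a_1^{-1} a_1$ immediately after $a_2$ to liberate $a_3$ from its clash with $a_2$, inserts $a_5 a_5^{-1}$ after $a_3$ to couple $a_3$ with $a_5$, and continues inductively for each successive pair of odd-indexed generators. The braid relation $a_i a_{i+1} a_i = a_{i+1} a_i a_{i+1}$ together with the identity $\phi \circ t_\alpha \circ \phi^{-1} = t_{\phi(\alpha)}$ are applied repeatedly, so that the remaining even-indexed twists, possibly conjugated by the inserted factors, constitute $W''$.

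For $f_5$ the same sequence of moves is applied to the initial segment $a_1 a_2 \cdots a_{2g}$, with the trailing $a_{2g+1}$ absorbed into $W'''$ at the end. Since conjugation by a mapping class preserves disjointness, the $g$ collected Dehn twists act along pairwise disjoint curves (the images of $a_1, a_3, \ldots, a_{2g-1}$ under some diffeomorphism of $\Sigma_g$), and the opening lemma of Section~\ref{Nodal} then applies to deform this subword into a spherical $g$-nodal singular fiber.

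The main obstacle is the combinatorial book-keeping needed to realize this scheme uniformly for all $g$. While the underlying pattern is transparent from Lemmas~\ref{f1}--\ref{f3}, the nested conjugations become cumbersome as $g$ grows; one must verify that $W''$ (respectively $W'''$) truly accounts for every originally present generator and that no spurious Dehn twists are introduced. A clean induction on $g$, with base case $g=2$ supplied by Lemmas~\ref{f1} and~\ref{f2}, appears the most natural strategy, although each inductive step requires carefully pushing the newly-introduced generator $a_{2g-1}$ through the previously constructed word of even-indexed twists by means of the braid relations.
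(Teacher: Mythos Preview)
Your approach is essentially correct and matches the paper's, which simply states that the result follows from Hurwitz moves and the commutativity of Dehn twists along disjoint curves, leaving the details as an exercise. Note that braid relations are not actually needed here (cf.\ the remark following Lemma~\ref{f1}): each odd-indexed $a_{2k+1}$ can be moved leftward past the accumulated conjugated even twists using only Hurwitz moves and disjoint commutativity, so the combinatorial bookkeeping you flag as the main obstacle is lighter than you suggest, and your specific insertion of $a_1^{-1}a_1$ after $a_2$ is not the move that liberates $a_3$---rather, one applies the Hurwitz move $a_2a_3 = a_3(a_3^{-1}a_2a_3)$ and then commutes subsequent odd $a_{2k+1}$ past the resulting conjugates, whose supporting curves lie in a neighbourhood of $a_{2k-2}\cup a_{2k-1}$ and hence are disjoint from $a_{2k+1}$.
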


\begin{proof} This is relatively easy to verify by applying Hurwitz moves and the commutativity relation of disjoint curves, and we leave it as an exercise to the reader.
\end{proof}

\section{The Main Theorems}\label{MT}

In this section, we will construct exotic copies of $\CP \# 7\CPb$, $\CP \# 6\CPb$, and  $3\,\CP\#k\,\CPb$ for $k = 16, 17, 18, 19$ starting with certain genus two Lefschetz fibration structures on $\CP\#13\,\CPb$ and $E(2)\#2\,\CPb$, and using the techniques and constructions outlined above in sections \ref{HS}, \ref{singular}, \ref{Nodal}.

\subsection{Fibration VIII-1 - VIII-4}\label{VIII}

In this subsection, we will construct exotic copies of $\CP \# 7\CPb$ and $\CP \# 6\CPb$ using certain genus two fibrations on $\CP \# 13\CPb$ that were studied by Namikawa-Ueno in \cite{NamU} and later by Gong, Lu, Tan in \cite{Gong}. 
\begin{theorem} 
Let $M$ be one of the following 4-manifolds 
\begin{enumerate}
\item $\CP \# 7\CPb$ 
\item $\CP \# 6\CPb$ 
\end{enumerate}
Then there exists an irreducible symplectic 4-manifold homeomorphic but non-diffeomorphic to $M$, obtained from the total space of a genus two fibration with two singular fibers of types VIII-1 and VIII-4, using the combinations of $2$-spherical deformations, symplectic blowups, and the (generalized) rational blowdown surgery.
\end{theorem}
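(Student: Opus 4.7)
My plan is to start from the explicit genus-$2$ fibration $f:\CP\#13\CPb\to\mathbb{CP}^{1}$ whose singular fibers are $F_{\infty}$ of Namikawa--Ueno type VIII-$4$ and $F_{0}$ of type VIII-$1$, deform $F_{0}$ by the $2$-nodal spherical deformation of Section~\ref{Nodal}, and then apply a combination of blowups and (generalized) rational blowdowns to reach the two target homeomorphism types. First I would pin down the homology. By Corollary~\ref{lemma1} and the method of Kitagawa, $f$ is obtained by blowing up the $12$ base points of Kitagawa's hyperelliptic pencil on $\mathbb{F}_{2}$, and the images under $\mu:\CP\#13\CPb\to\mathbb{F}_{2}$ of the components of $F_{\infty}$ are prescribed multiples of the $(-2)$-section $C_{0}$, the fiber $F$ and the unique $(-1)$-section. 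Reading each component of VIII-$4$ as an iterated strict transform yields explicit classes in $\{h,e_{1},\dots,e_{13}\}\cong H_{2}(\CP\#13\CPb;\Z)$. The complementary fiber $F_{0}$ of type VIII-$1$ is prescribed by the monodromy computed in \cite{Ish2, Ish}; in its Dehn-twist factorization I would isolate a consecutive subword of the form $a_{1}a_{2}a_{3}a_{4}$ or $a_{1}a_{2}a_{3}a_{4}a_{5}$ and apply Lemma~\ref{f1} or Lemma~\ref{f2} to replace $F_{0}$ by two disjoint spherical $2$-nodal fibers $S_{1},S_{2}$ (plus, in the second case, a single Lefschetz nodal fiber), each $S_{i}$ representing the original fiber class.

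Next I would assemble a suitable plumbing. Using the classes computed above, the components of $S_{1}\cup S_{2}$ together with chosen components of $F_{\infty}$ and one of the sections guaranteed by Corollary~\ref{lemma1} and \cite{Gong2} form a linear chain of embedded symplectic spheres; a small number of symplectic blowups at carefully chosen nodes or base points adjusts the self-intersections so that the chain is a configuration $C_{p,q}$ with $p^{2}/(pq-1)=[r_{k},\dots,r_{1}]$ as in Section~\ref{rb}. For the $\CP\#7\CPb$ case I would aim to embed a single $C_{p,1}$ of length $k=6$ inside $\CP\#13\CPb$ with no extra blowup; for the $\CP\#6\CPb$ case I would allow one additional blowup and either enlarge the chain to length $7$ or use two disjoint configurations $C_{p,q}$ and $C_{p',q'}$. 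In every case I must ensure the configuration is disjoint from enough complementary $(-1)$-classes and that $S_{1}, S_{2}$ meet $F_{\infty}$ only in the predicted intersection pattern, both of which I expect to verify by direct intersection-number bookkeeping in $\{h,e_{1},\dots,e_{13}\}$.

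The third step is the surgery and the numerical check. Because the rational blowdown is a symplectic operation (Symington, Gompf), replacing $C_{p,q}$ by the rational homology ball $\mathbb{B}_{p,q}$ produces a symplectic $4$-manifold $X_{p,q}$. Lemma~\ref{rbd}, combined with the bookkeeping of the preparatory blowups, shows that $X_{p,q}$ has the Euler characteristic, signature and $b_{2}^{\pm}$ of $\CP\#k\CPb$ for $k=7$ and $k=6$ respectively. The core obstacle is simple connectivity: I would apply van Kampen to $X_{p,q}=(X\setminus C_{p,q})\cup\mathbb{B}_{p,q}$, observe that the meridian of $\bd C_{p,q}$ normally generates the image of $\pi_{1}(\bd C_{p,q})\to\pi_{1}(\mathbb{B}_{p,q})$, and then kill the residual loops using a regular fiber of the deformed $f$, the $(-1)$-section of Corollary~\ref{lemma1}, and the vanishing cycles of the remaining Lefschetz nodal fibers produced by the deformation. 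This bookkeeping is modeled on \cite{P2, SS1, PSS} but is more delicate in our genus-$2$ setting because of the intricate Ishizaka monodromy, and I expect it to be the main technical difficulty.

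Once $\pi_{1}(X_{p,q})=1$ is established, Freedman's theorem identifies $X_{p,q}$ homeomorphically with $\CP\#k\CPb$. To rule out diffeomorphism I would show $X_{p,q}$ is minimal: every symplectic $(-1)$-sphere in $\CP\#13\CPb$ either meets $C_{p,q}$ non-trivially or pairs non-trivially with the classes replaced by $\mathbb{B}_{p,q}$, so by Usher's minimality criterion no symplectic $(-1)$-class survives. Since $\CP\#k\CPb$ for $k\geq 2$ is manifestly not minimal, $X_{p,q}$ cannot be diffeomorphic to it; minimality moreover implies that $X_{p,q}$ is smoothly irreducible, which gives the irreducibility claim in the statement and completes the proof.
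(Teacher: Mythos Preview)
Your overall strategy is correct—start from the Kitagawa pencil on $\mathbb{F}_{2}$, determine the homology classes of the VIII-4 components, deform VIII-1 via Lemma~\ref{f1}, and rationally blow down—but the concrete plumbing you propose does not work, and this is the essential content of the proof.

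The $2$-nodal spherical fibers $S_{1},S_{2}$ produced by Lemma~\ref{f1} are \emph{immersed} spheres in the fiber class, each with two transverse self-intersections; they are not embedded and cannot be used directly as vertices of a $C_{p,q}$ plumbing. The paper therefore blows up all four nodes, passing to $\CP\#17\CPb$, so that the proper transforms $\tilde B-2e_{14}-2e_{15}$ and $\tilde B-2e_{16}-2e_{17}$ become embedded $(-8)$-spheres. These two spheres, together with the unique $(-1)$-section $e_{13}$, are then \emph{symplectically resolved} into a single embedded sphere $s$ of square $-13$. This $-13$ sphere caps a chain of nine $(-2)$-spheres coming from the VIII-4 fiber, giving the configuration $C_{11}$ of length $10$ in $\CP\#17\CPb$; blowing it down yields $b_{2}^{-}=17-10=7$. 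For $\CP\#6\CPb$ one performs a fifth blowup to reach $\CP\#18\CPb$ and builds the generalized configuration $C_{23,11}$ of length $12$, whose leading spheres have squares $-3$ and $-14$ (the $-3$ coming from resolving three components of VIII-4). Your proposed ``$C_{p,1}$ of length $6$ in $\CP\#13\CPb$ with no extra blowup'' is not realizable from these ingredients, and your length/blowup count for $\CP\#6\CPb$ is likewise off by one.

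Two smaller points. First, simple connectivity is much easier than you suggest: one simply exhibits a sphere (e.g.\ $e_{3}-e_{4}$) in the complement of the plumbing meeting a terminal vertex once, which kills the generator of $\pi_{1}$ of the lens space boundary; no appeal to vanishing cycles or regular fibers is needed. Second, the paper distinguishes the result from $\CP\#k\CPb$ not via minimality but by computing $K|_{\mathcal Y}\cdot\omega|_{\mathcal Y}>0$ explicitly from the intersection matrix of the plumbing, and then invoking the Li--Liu uniqueness of symplectic structures on $\CP\#k\CPb$ for $k\le 9$, which forces $K\cdot\omega<0$ there. Your minimality route can in principle also separate the manifolds, but the sentence ``every symplectic $(-1)$-sphere in $\CP\#13\CPb$ either meets $C_{p,q}$ non-trivially\dots'' is not a proof: the ambient manifold is $\CP\#17\CPb$ (resp.\ $\CP\#18\CPb$), and one must actually rule out $(-1)$-classes in the \emph{blown-down} manifold, which the paper does separately via the methods of \cite{OzsSz}.
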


\begin{proof} Our exotic symplectic $4$-manifolds will be obtained from blow ups of the Hirzebruch's surface $\mathbb{F}_2$ via a combination of the deformation, symplectic blow ups and rational blowdown. More specifically, we will construct symplectic embeddings of (generalized) rational blowdown plumbings $C_{11}$ and $C_{23, 11}$, and apply the (generalized) rational blowdown surgery to them.

First, let us recall Theorem~\ref{main} and Corollaries~\ref{lemma1},~\ref{lemma2}, which are derived from it, on the existence of a certain genus two pencil that yields to a fibration with the singularity types VIII-4 and VIII-1. Using this genus two pencil and the sequence of blowups, one can write down the explicit classes corresponding to the components of singular fibers, which will be needed in the computation of Seiberg-Witten invariants. The discussion that follows provides these details.

Let us take a reducible algebraic curve $A$ consisting of the fiber $F=h-e_1$ with multiplicity $5$ and the $-2$ section $C_0 = e_1-e_2$ with multiplicity $2$ in $\mathbb{F}_2$. Let the curve $A$ be defined by a homogeneous polynomial $p_1$. We also take an irreducible algebraic curve $B= 2C_0+5F$ defined by a homogeneous polynomial $p_2$. Note that $B^2= 12, F \cdot B = 2, F \cdot C_0 = 1$. 

According to Theorem~\ref{main}, we may represent $A$ and $B$ curves as at the beginning of Figure \ref{eight}. To get the above given pencil in $\mathbb{F}_2$, which looks complicated at first, the contraction of $-1$ spheres can be used in Corollary~\ref{lemma2}: by blowing down the $-1$ sphere section and the resulting $-1$ curves consecutively, the above given pencil arises in $\mathbb{F}_2$.




The curves $A$ and $B$ represent the same class in homology, and define a Lefschetz pencil $C_{[t_1:t_2]}:= {t_1p_1 + t_2p_2 = 0}$ whose base locus is the point $p$.
\begin{figure}
\begin{center}
\scalebox{0.75}{\includegraphics{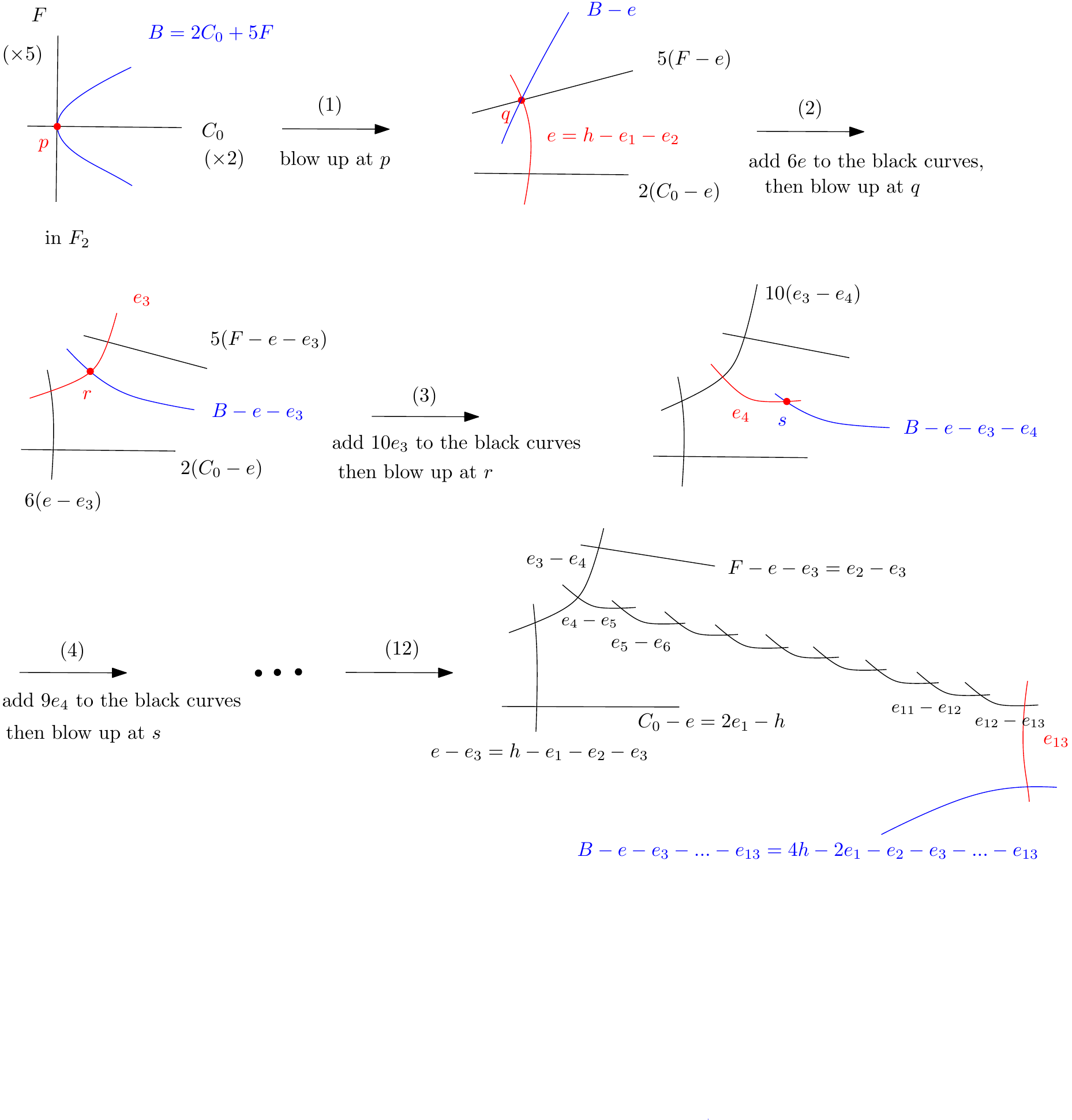}}\caption{Fibers VIII-4, VIII-1}
\label{eight}
\end{center}
\end{figure} 
We blow up the given pencil at the base point $p$ (see Figure \ref{eight}, step 1), and the resulting exceptional divisor is $e=h-e_1-e_2$. To obtain the desired singular fiber of type  VIII-4, we need to redefine the curves $A$ and $B$ as follows: We take $A'$ as the proper transform of curve $A$ together with the divisor $e$ with multiplicity six, and the curve $B'$ as $B-e$, the proper transform of $B$. Notice that now both $A'$ and $B'$ curves represent the homology class $2C_0+5F-e = 4h-2e_1-e_2$, so they define a Lefschetz pencil of genus $2$. Now as $e$ is a part of $A'$ curve and intersects $B-e$ curve at the point $q$, the base locus is nonempty. We blow up at $q$ and denote the resulting exceptional divisor by $e_3$ (see Figure \ref{eight}, step 2). As above we will reset our curves, by adding $e_3$ with multiplicity ten to the proper transform of $A'$ (which is colored black in Figure~\ref{eight}) and as the second curve we take $B-e-e_3$ (the blue curve). Both of the new curves represent the same homology class $2C_0+5F -e -e_3$, and thus define a Lefschetz pencil. The intersection point $r$ of $A'$ and $B'$ curves becomes the base locus at which we blow up and call the exceptional divisor $e_4$ (Figure \ref{eight}). We continue in the same fashion. To equate the homology classes of the black and the blue curves, we add $e_4$ with multiplicity nine to the black curve and blow up its intersection point $s$ with the blue curve. We call the exceptional divisor $e_5$. Then add $e_5$ with multiplicity eight to the black curve and blow up. Notice that the multiplicities decrease by one at each step, so after the 12th blow up, we add $e_{12}$ with multiplicity one to the black curve which intersects the blue curve at one point. Lastly, we blow up at that point and call the divisor that separates the black and the blue curves by $e_{13}$. Note that at this step, the total homology class of the black curve and the homology class of the blue curve $\tilde B$ are both $B-e-e_3-\cdots-e_{13} = 2C_0+5F-e-e_3-\cdots-e_{13}$. Since the homology classes are equal, we do not include the exceptional divisor $e_{13}$ to any component of the Lefschetz pencil and stop at this step (see the last part of Figure \ref{eight} where we ignored the multiplicities of the irreducible components of the black curve). This configuration is symplectically embedded in $\mathbb{F}_2 \# 12\CPb \cong \CP \# 13\CPb$. Moreover, we note that the self intersection of the blue curve is zero; $(\tilde B)^2= (B-e-e_3-\cdots-e_{13})^2= (4h-2e_1 -e_2 -e_3 -\cdots -e_{13})^2 = 0$ so $(\tilde B)^2$ is the generic genus two fiber. Observe that the black curve on the last part of Figure \ref{eight} is the fiber VIII-4, which is the complement of the $(2,5)$ cusp singular fiber VIII-1 \cite{Gong}.

\subsection{Case 1}
Using the singular fibers of types VIII-4 and VIII-1 in $\mathbb{F}_2 \# 12\CPb$ $\cong \CP \#13\CPb$, we will apply our deformation technique and the rational blowdown surgery to construct an exotic copy of $\CP \# 7\CPb$. We first consider the complement of the fiber VIII-4 which is a $(2,5)$ cusp fiber VIII-1. Its monodromy is given by the word $W_{1} = (a_1a_2a_3a_4)$ in the Mapping Class Group $\Gamma_{2}$ \cite{GS} and by Lemma~\ref{f1} this fibration can be deformed to contain two disjoint $2$-nodal spherical singular fibers.

Geometrically we may sketch this as in Figure \ref{cusp}. The resulting two nodal spherical fibers, with $2$ nodes on each, are in $\CP \# 13\CPb$. We blow up these four nodes of the two $2$-nodal fibers, as in Figure \ref{cusp} and resolve their proper transforms $\tilde B-2e_{14}-2e_{15}$ and $\tilde B-2e_{16}-2e_{17}$ with the sphere section $e_{13}$. The resulting sphere $s$ has the following homology class
\begin{eqnarray*}
s&:=& (\tilde B-2e_{14}-2e_{15})+(\tilde B-2e_{16}-2e_{17})+e_{13}\\ 
&=& 8h - 4e_1-2e_2 -2e_3-\cdots-2e_{12}-e_{13}-2e_{14}-\cdots-2e_{17}, 
\end{eqnarray*}
where $e_i$'s are the exceptional divisors and $\tilde B$ is as above. We note that $s^2 = -13$.
\begin{figure}
\begin{center}
\scalebox{0.70}{\includegraphics{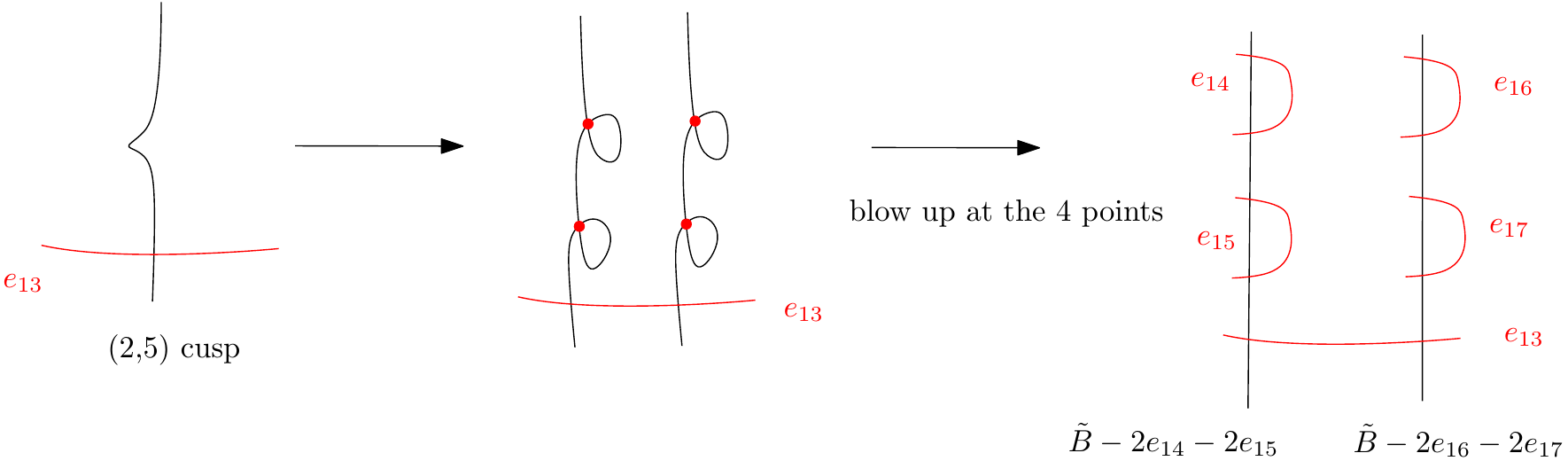}}\caption{Deformation of the $(2,5)$ cusp and the class $s$}
\label{cusp}
\end{center}
\end{figure} 
Also, we have the singular fiber VIII-4 intersecting the sphere $s$ once in $\CP \# 17\CPb$. Consequently, we obtain a plumbing $P$ of length ten as in Figure \ref{bdown} which is symplectically embedded in $\CP \# 17\CPb$. In the plumbing $P$, the homology class of leading $-13$ sphere is given by $s$ above, and we denote the $-2$ spheres of the plumbing $P$ by $u_2,\cdots,u_{10}$ respectively. We rationally blow down this plumbing and call the resulting symplectic manifold $\mathcal Y$ \cite{Sym}. Next, we will show that $\mathcal Y$ is an exotic copy of $\CP \# 7\CPb$.
\begin{figure}
\begin{center}
\scalebox{0.70}{\includegraphics{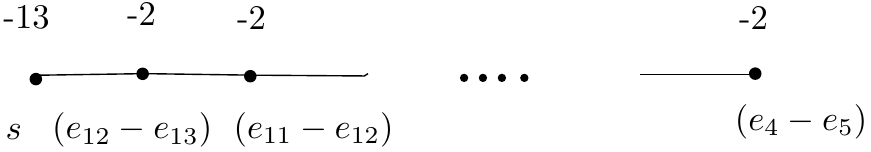}}\caption{Plumbing of length 10}
\label{bdown}
\end{center}
\end{figure} 

Let us first show that $\mathcal Y$ is homeomorphic to $\CP \# 7\CPb$. This is an application of Lemma~\ref{rbd} and Freedman's classification theorem. For the reader's convenience, we spell out the details below. 

Let $\mathcal Y = (\CP \# 17\CPb - P) \cup B$, where $B$ is the rational homology ball whose boundary is the lens space  $L(121, 10)$ which also bounds $P$. We first contract the generator of $\pi_1(\partial P)$ along the sphere $(e_3 - e_4)$, which tells us that  $\CP \# 17\CPb - P$ is simply connected. Note that $(e_3 - e_4)$ intersects $P$ but it is not used in the rational blow down surgery. On the other hand we have the surjection $\pi_1(\partial B) \twoheadrightarrow \pi_1(B)$. Thus, $\mathcal Y$ is simply connected by Van Kampen's theorem. Applying well known formulas, we compute
\begin{eqnarray*}
e (\mathcal Y)&=& e((\CP \# 17\CPb) - e(P) + e(B)\\
&=& 20-11+1\\
&=&10,\\
\sigma (\mathcal Y)&=& \sigma (\CP \# 17\CPb) - \sigma (P) + \sigma (B)\\
&=& -16-(-10)\\
&=&-6.
\end{eqnarray*}
Hence, by Freedman's classification the above follows.

Next, we will show that $\mathcal Y$ is not diffeomorphic to $\CP \# 7\CPb$. First, we know that for every $k > 0$, the manifold $\CP \# k \CPb$ admits a symplectic structure whose cohomology class is given by $w= ah-b_1e_1-\cdots-b_ke_k$ for some positive rational numbers $a,b_1,\cdots,b_k$ with $a>b_1>\cdots>b_k$ and $a>b_1+\cdots+b_k$ (\cite{KS}, Lemma 5.4). Let 
\begin{equation}
w= ah-b_1e_1-\cdots-b_{17}e_{17} 
\end{equation}
be the cohomology class of a symplectic strucutre on $\CP \# 17 \CPb$ with $a>b_1>\cdots>b_{17}>0$ and $a>b_1+\cdots+b_{17}$. Let $K$ be the canonical class of $\CP \# 17 \CPb$, so we have 
\begin{equation}
K= -3h+e_1+\cdots+e_{17}. 
\end{equation}
By direct computation, we see that $K$ is disjoint from all $-2$ spheres $u_2,\cdots,u_{10}$ of the plumbing $P$ in Figure \ref{bdown}. Let $\gamma_1,\cdots,\gamma_{10}$ be the basis of $H^2(P,\mathbb{Q})$ which is dual to $s,u_2,\cdots,u_{10}$. Then from the adjunction formula
\begin{equation*}
\begin{aligned}
K|_P =& (K \cdot s) \gamma_1+(K \cdot u_2) \gamma_2 +\cdots+(K \cdot u_{10}) \gamma_{10}\\
=& (-3h+e_1+\cdots+e_{17})\cdot \\
&(8h - 4e_1-2e_2 -2e_3-\cdots-2e_{12}-e_{13}-2e_{14}-\cdots-2e_{17})\gamma_1\\
=& 11 \gamma_1.
\end{aligned}
\end{equation*}
We calculate the restriction of the symplectic class $w$ on $P$
\begin{equation*}
\begin{aligned}
w|_P &= (w \cdot s) \gamma_1+(w \cdot u_2) \gamma_2 +\cdots+(w \cdot u_{10}) \gamma_{10}\\
&= (ah-b_1e_1-\cdots-b_{17}e_{17})\cdot\\
&(8h - 4e_1-2e_2 -2e_3-\cdots-2e_{12}-e_{13}-2e_{14}-\cdots-2e_{17})\gamma_1\\
&+ (ah-b_1e_1-\cdots-b_{17}e_{17})\cdot (e_{12}-e_{13})\gamma_2+\cdots\\
&+(ah-b_1e_1-\cdots-b_{17}e_{17})\cdot (e_{4}-e_{5})\gamma_{10}\\
&=(8a-4b_1-2b_2-2b_3-\cdots-2b_{12}-b_{13}-2b_{14}-\cdots-2b_{17})\gamma_1\\
&+ (b_{12}-b_{13})\gamma_2 + (b_{11}-b_{12})\gamma_3+ (b_{10}-b_{11})\gamma_4 +(b_{9}-b_{10})\gamma_5\\
&+ (b_8-b_9)\gamma_6+ (b_7-b_8)\gamma_7+(b_6-b_7)\gamma_8+(b_5-b_6)\gamma_9+(b_4-b_5)\gamma_{10}.    
\end{aligned}
\end{equation*}
Let $M$ be the intersection matrix for the plumbing $P$:
$$\quad
M=
\begin{bmatrix} 
-13 & 1 & 0 &0 &0 &0 &0 &0 &0 &0 \\
1    &-2 &1 & 0 &0 &0 &0 &0 &0 &0\\
0&1&-2 &1 & 0 &0 &0 &0 &0 &0\\
0&0&1&-2 &1 & 0 &0 &0 &0 &0\\
0&0&0&1&-2 &1 & 0 &0 &0 &0\\
0&0&0&0&1&-2 &1 & 0 &0 &0\\
0&0&0&0&0&1&-2 &1 & 0 &0\\
0&0&0&0&0&0&1&-2 &1 & 0 \\
0&0&0&0&0&0&0&1&-2 &1\\
0&0&0&0&0&0&0&0&1&-2 
\end{bmatrix}$$
Then, the first column $[-1/121 (10,9,8,7,6,5,4,3,2,1)]$ of $M^{-1}$ gives us $\gamma_1 \cdot \gamma_i$, $i=1,\cdots,10$.  A direct, but lengthy computation shows that \begin{equation*}
\begin{aligned}
K|_P \cdot w|_P &= -11/121 (80a-40b_1-20(b_2+b_3+b_{14}+b_{15}+b_{16}+b_{17})\\
& -19(b_4+b_5+b_6+\cdots
+b_{13})).
\end{aligned}
\end{equation*}
Finally, we compute
\begin{eqnarray*}
K|_{\mathcal Y} \cdot w|_{\mathcal Y} &=& K \cdot w - K|_P \cdot w|_P\\
&=& (-3a+b_1+b_2+\cdots+b_{17})\\
&+&11/121 (80a-40b_1-20(b_2+b_3+b_{14}+b_{15}+b_{16}+b_{17})\\ 
&-&19(b_4+b_5+b_6+\cdots+b_{13}))\\
&=& 1/121 (517a-319b_1-88(b_4+\cdots+b_{13})\\
&-&99(b_2+b_3+\cdots+b_{17}))\\
&>&0.
\end{eqnarray*}
This shows that $\mathcal Y$ is not diffeomorphic to $\CP \# 7\CPb$. In fact, the standard symplectic form on $\CP \# k\CPb$ satisfies $K\cdot w <0$. Also, there is a unique symplectic structure on $\CP \# k\CPb$ for $2 \leq k \leq 9$ up to diffeomorphism and deformation (\cite{LL}, Theorem D). Hence $\CP \# 7\CPb$ does not admit a symplectic structure with $K \cdot w >0$. 

Furthermore, using the adjunction formula and inequalities, and the methods of the article \cite{{OzsSz}}, we have verified the minimality of $\mathcal Y$.  


\subsection{Case 2}
In this subsection, we will construct an exotic copy of $\CP \# 6\CPb$. We will use a generalized rational blowdown plumbing $C_{23,11}$ of the form as shown in Figure \ref{plumbing} for $m=3$ and $k = 9$. 

\begin{figure}
\begin{center}
\scalebox{0.95}{\includegraphics{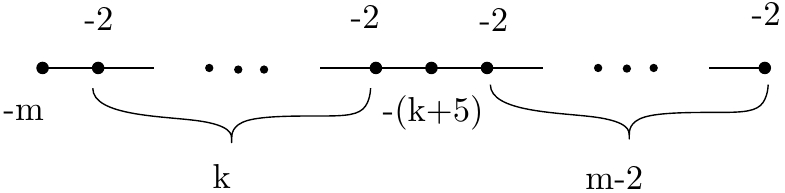}}
\caption{Plumbing for a generalized rational blow down}
\label{plumbing}
\end{center}
\end{figure} 

First, the curve $\tilde B-2e_{14}-2e_{15}$ intersects the exceptional divisor $e_{14}$ twice as in Figure \ref{cusp} above. We blow up one of their intersection points, call the exceptional sphere $e_{18}$. We note that, after the blow up, the $-9$ curve $\tilde B-2e_{14}-2e_{15}-e_{18}$ intersects the proper transform $e_{14}-e_{18}$ of $e_{14}$ once. Let us take the symplectic resolution of the three curves:
\begin{eqnarray*}
s'&:=& (\tilde B-2e_{14}-2e_{15}-e_{18})+(\tilde B-2e_{16}-2e_{17})+e_{13}\\ 
&=& 8h - 4e_1-2e_2 -2e_3-\cdots-2e_{12}-e_{13}-2e_{14}-\cdots-2e_{17}-e_{18}, 
\end{eqnarray*}
where $e_i$'s are the exceptional divisors and $\tilde B$ is as above. We have $s'^2 = -14$ and it intersects $e_{14}-e_{18}$ and also $e_{12}-e_{13}$. Moreover, in Figure \ref{eight}, we symplectically resolve three curves: $(2e_1 -h) + (h-e_1-e_2-e_3)+(e_3-e_4)= e_1-e_2-e_4$ which is a -3 curve. Hence, we obtain a plumbing $P'$ of length $12$ for the rational blow down (see Figure \ref{6}). It can be obtained as the Hirzebruch-Jung continued fraction of $529/252$, where $(529,252)=1$ and the boundary of $P'$ is the lens space $L(529,252)$. Let us label the spheres from left to right as $u'_1, \cdots, u'_{12}$. 
\begin{figure}
\begin{center}
\scalebox{0.95}{\includegraphics{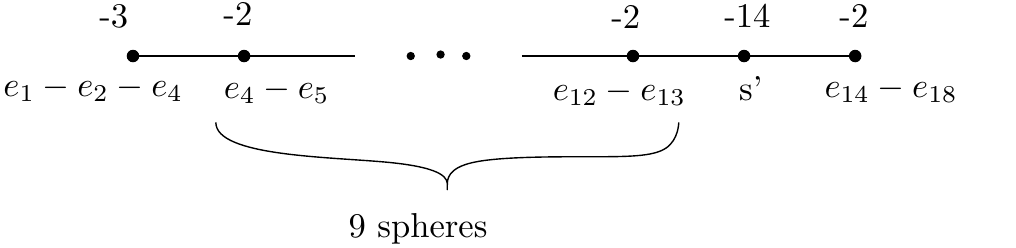}}\caption{Generalized rational blow down plumbing of length 12}
\label{6}
\end{center}
\end{figure} 
Note that this plumbing is symplectically embedded in $(\mathbb{F}_2 \# 12\CPb) \# 5\CPb = \CP \# 18\CPb$. Then we rationally blow down this plumbing in $\CP \# 18\CPb$, and call the resulting symplectic manifold $\mathcal Z$. Next, we will show that $\mathcal Z$ is an exotic copy of $\CP \# 6\CPb$. As above, we first show that $\mathcal Z$ is homeomorphic to $\CP \# 6\CPb$.

Let $\mathcal Z = (\CP \# 18\CPb - P') \cup B'$, where $B'$ is the rational homology ball whose boundary is the lens space which also bounds $P'$. We contract the generator of $\pi_1(\partial P')$ along the sphere $F-e-e_3 = e_2- e_3$. Note that $( e_2- e_3)$ intersects $P'$ but was not used in the rational blow down plumbing $P'$. On the other hand, we have the surjection $\pi_1(\partial B') \twoheadrightarrow \pi_1(B')$. Thus, $\mathcal Z$ is simply connected by Van Kampen's theorem. As above, we compute $e, \sigma$ of $\mathcal Z$, and by Freedman's classification theorem, we conclude that $\mathcal Z$ is homeomorphic to $\CP \# 6\CPb$

Next, we will show that $\mathcal Z$ is not diffeomorphic to $\CP \# 6\CPb$. Let 
\begin{equation}
w'= a'h-b'_1e_1-\cdots-b'_{18}e_{18}     
\end{equation}
be the cohomology class of a symplectic strucutre on $\CP \# 18 \CPb$ with $a'>b'_1>\cdots>b'_{18}>0$ and $a'>b'_1+\cdots+b'_{18}$. Let $K'$ be the canonical class of $\CP \# 18 \CPb$.  We have 
\begin{equation}
K'= -3h+e_1+\cdots+e_{18}. 
\end{equation}
Next, we compute
\begin{equation*}
\begin{aligned}
K' \cdot u'_1 =& (-3h+e_1+\cdots+e_{18}) \cdot (e_1-e_2-e_4) =1\\
K' \cdot u'_i =& 0,\;\;\; i = 2,\dots,10,12\\
K' \cdot u'_{11} =& (-3h+e_1+\cdots+e_{18}) \cdot\\
&(8h-4e_1-2e_2-\cdots-2e_{12}-e_{13}-2e_{14}-..-2e_{17}-e_{18})\\
&=12.
\end{aligned}
\end{equation*}
Let $\gamma'_1,\cdots,\gamma'_{12}$ be the basis of $H^2(P',\mathbb{Q})$ which is dual to $u'_1,\cdots,u'_{12}$. Using the adjunction formula,
\begin{eqnarray*}
K'|_{P'} &=& \sum_{i=1}^{12}(K' \cdot u'_i) \gamma'_i\\
&=& \gamma'_1+12 \gamma'_{11}.
\end{eqnarray*}
Then we calculate the restriction of the symplectic class $w'$ on $P'$
\begin{eqnarray*}
w'|_{P'} &=& \sum_{i=1}^{12}(w' \cdot u'_i) \gamma'_i\\
&=& (b'_1-b'_2-b'_4) \gamma'_1+ (b'_4-b'_5) \gamma'_2 + (b'_5-b'_6) \gamma'_3 + (b'_6-b'_7) \gamma'_4\\
&+& (b'_7-b'_8) \gamma'_5 + (b'_8-b'_9) \gamma'_6 + (b'_9-b'_{10}) \gamma'_7 + (b'_{10}-b'_{11}) \gamma'_8\\
&+&(b'_{11}-b'_{12}) \gamma'_9 + (b'_{12}-b'_{13}) \gamma'_{10}+ (b'_{14}-b'_{18}) \gamma'_{12}\\
&+&(8a'-4b'_1-2b'_2-\cdots-2b'_{12}-b'_{13}-2b'_{14}\cdots-2b'_{17}-b'_{18})\gamma'_{11}.
\end{eqnarray*}
Let $M'$ be the intersection matrix for the plumbing $P'$:
$$\quad
M'=
\setcounter{MaxMatrixCols}{12}
\begin{bmatrix} 
-3 & 1 & 0 &0 &0 &0 &0 &0 &0 &0 &0 & 0 \\
1    &-2 &1 & 0 &0 &0 &0 &0 &0 &0&0&0\\
0&1&-2 &1 & 0 &0 &0 &0 &0 &0&0&0\\
0&0&1&-2 &1 & 0 &0 &0 &0 &0&0&0\\
0&0&0&1&-2 &1 & 0 &0 &0 &0&0&0\\
0&0&0&0&1&-2 &1 & 0 &0 &0&0&0\\
0&0&0&0&0&1&-2 &1 & 0 &0&0&0\\
0&0&0&0&0&0&1&-2 &1 & 0 &0&0\\
0&0&0&0&0&0&0&1&-2 &1&0&0\\
0&0&0&0&0&0&0&0&1&-2&1&0\\
0&0&0&0&0&0&0&0&0&1&-14&1\\
 0&0&0&0&0&0&0&0&0&0&1&-2
\end{bmatrix}$$
To find $K'|_{P'} \cdot w'|_{P'}$, we need to read the first and the eleventh columns of $M'^{-1}$ which are $[-1/529 (252, 227, 202, 177, 152, 127, 102, 77, 52, 27,2,1)]$ and \\
$[-1/529 (2, 6, 10, 14, 18, 22, 26, 30, 34, 38, 42, 21)]$, respectively. Hence, we find 
\begin{eqnarray*}
K'|_{P'} \cdot w'|_{P'} &=& -1/529 [4048a' - 1748b'_1- 1288b'_2 - 989(b'_4+b'_5+\cdots+b'_{13})\\
&-&759(b'_{14}+b'_{18})-1012(b'_3+b'_{15}+b'_{16}+b'_{17})].
\end{eqnarray*}
We also have $K' \cdot w' = -3a' + b'_1 + \cdots + b'_{18}$, so we have
\begin{eqnarray*}
K'|_{\mathcal Z} \cdot w'|_{\mathcal Z} &=& K' \cdot w' - K'|_{P'} \cdot w'|_{P'}\\
&=& 1/529 [2461a' -1219b'_1 -759b'_2 -460(b'_4+\cdots+b'_{13})\\ 
&-&230(b'_{14}+b'_{18})-483(b'_3+b'_{15}+b'_{16}+b'_{17})]\\
&>&0.
\end{eqnarray*}
This shows that $\mathcal Z$ is not diffeomorphic to $\CP \# 6\CPb$, since $\CP \# 6\CPb$ does not admit a symplectic structure with $K' \cdot w' >0$ as explained above.

Furthermore, minimality of $\mathcal Z$ follows using the methods developed in \cite{{OzsSz}}. 
\end{proof}

\subsection{Fibration V - V*}\label{VV*}

Let us take the fibration of type (V - V*). That is to say we have $f: \CP \# 13\CPb \rightarrow \mathbb{CP}^1$ relatively minimal fibration of genus $2$, having two singular fibers of types V and V* (\cite{Gong}). The following lemma is similar to Lemma 3.2 in \cite{Kit1}.

\begin{lemma} For the relatively minimal, genus 2 fibration $f: \CP \# 13\CPb \rightarrow \mathbb{CP}^1$ with two singular fibers V and V*, there exists a birational morphism $\mu:  \CP \# 13\CPb  \rightarrow \mathbb{F}_3$ such that the images of the fiber V* and a generic fiber $F$ of $f$ under $\mu$ form the pencil as at the beginning of the Figure~\ref{five}. \end{lemma}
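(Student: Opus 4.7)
The plan is to run the same playbook used for the (VIII-4, VIII-1) case in Section~\ref{VIII}, but with the target Hirzebruch surface $\mathbb{F}_2$ replaced by $\mathbb{F}_3$. The inputs are: $X = \CP \# 13\CPb$ has $\rho(X) = 14 = 4g+6$ for $g=2$, and $f : X \to \mathbb{CP}^1$ is a relatively minimal genus-$2$ fibration whose singular fibers are of types V and V$^*$ in the Namikawa-Ueno classification \cite{NamU}. My first step is to invoke Kitagawa's Theorem~\ref{main1}: it furnishes \emph{some} birational morphism $\mu_0 : X \to \mathbb{F}_{n}$ with $n \leq g+1 = 3$ satisfying $\mu_{0*}F \sim 2C_0 + (g+n+1)F_0$ and the property that every $(-1)$-curve contracted by $\mu_0$ meets a general fiber $F$ at exactly one point, and in particular guarantees the existence of a $(-1)$-section $\sigma$ of $f$.

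My second step is to pin down that we can, in fact, take $n = 3$. I would identify, from the explicit dual graph of V$^*$ in \cite{NamU}, the unique component $C$ of V$^*$ hit by the $(-1)$-section $\sigma$, and then check (using the list of multiplicities and self-intersections in V$^*$) that iterated contraction of $\sigma$ and of the $(-1)$-curves it creates inside V$^*$ eventually lands in $\mathbb{F}_n$ with $n = 3$. The cleanest way to see that $n$ is forced to be $3$ is via Corollary~\ref{lemma2}: if every birational morphism from $X$ to $\mathbb{F}_n$ satisfying the conditions of Theorem~\ref{main1} had $n \in \{g, g+1\}$, then $f$ would carry the reducible fiber of Figure~\ref{figur3}. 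An inspection of the dual graphs of V and V$^*$ shows that V$^*$ is the fiber matching Figure~\ref{figur3} with the $-(g+1)=-3$ curve in place, which both confirms applicability and singles out $n = g+1 = 3$.

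My third step is to carry out the successive contraction explicitly, as Kitagawa does in Lemma 3.2 of \cite{Kit1}. Starting with the $(-1)$-section $\sigma$, I would blow $\sigma$ down, producing a new $(-1)$-curve inside the proper transform of V$^*$; that curve is blown down in turn, and so on. The standard bookkeeping applies at each contraction: blowing down a reduced $(-1)$-curve of multiplicity $m$ transverse to a component of multiplicity $r$ and self-intersection $-s$ turns the latter into a component of multiplicity $r+m$ and self-intersection $-s+1$. After the correct total of thirteen contractions (matching $b_2(X) - b_2(\mathbb{F}_3) = 14 - 2 = 12$ plus the initial $(-1)$-section, consistent with the $4g+4 = 12$ base-point blowups of a generic pencil in $\mathbb{F}_3$ plus $\sigma$), the image $\mu(V^*)$ descends to a divisor on $\mathbb{F}_3$ supported on the $-3$-section $C_0$ and one fiber $F_0$ of $\mathbb{F}_3$, with the multiplicities prescribed by the Namikawa-Ueno dual graph of V$^*$. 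A generic fiber $F$ of $f$, by Theorem~\ref{main1}(i), pushes forward to a member of the linear system $|2C_0 + 6F_0| = |2C_\infty|$ on $\mathbb{F}_3$, which by Proposition~\ref{prop2} (with $k = g+1-n = 0$ replaced by the analog for $\mathbb{F}_3$ and class $2C_\infty$) contains smooth irreducible genus-two curves. Thus $\mu_*F$ and $\mu(V^*)$ generate a pencil in $|2C_\infty|$ whose base locus is a single point on $C_0 \cap F_0$ at which the two members have the tangency dictated by V$^*$, which is precisely the configuration drawn at the beginning of Figure~\ref{five}.

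The main obstacle is the bookkeeping in the third step: one must walk through the dual graph of V$^*$ in the correct order, confirm at each stage that the contracted curve is indeed a $(-1)$-curve transverse to $F$, and verify that the resulting image classes on $\mathbb{F}_3$ produce exactly the pencil in Figure~\ref{five}, and not the analogous pencil in $\mathbb{F}_2$ or $\mathbb{F}_1$. This is essentially a careful but mechanical computation once the V$^*$ dual graph from \cite{NamU} is laid out, and it closely parallels the contraction performed for the VIII-4 fiber in the preceding subsection.
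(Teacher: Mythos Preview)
Your overall strategy---invoke Theorem~\ref{main1} using $\rho(X)=14=4g+6$, locate the $(-1)$-section, and then contract it together with the components of V$^*$ one by one following Kitagawa's Lemma~3.2---is exactly what the paper does. In the paper the value $n=3$ is not argued a priori; it simply falls out of the contraction: after blowing down the section $s$ and then the components $a,b,\ldots,k$ in order, the remaining component $l$ has $(\mu_*l)^2=3$, so $\mu_*l=C_\infty$ and the target is $\mathbb{F}_3$.

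Your ``cleanest'' justification for $n=3$ in the second step is wrong, however, and you should drop it. The fiber whose dual graph is Figure~\ref{figur3} is VIII-4, not V$^*$; that is precisely why the VIII-4 case in Section~\ref{VIII} lands in $\mathbb{F}_g=\mathbb{F}_2$ via Theorem~\ref{main} and Corollary~\ref{lemma1}. Moreover, your use of Corollary~\ref{lemma2} is in the wrong direction: it says that if \emph{every} admissible $\mu$ has $n\in\{g,g+1\}$ then a Figure~\ref{figur3} fiber exists, not conversely; and even the converse (Theorem~\ref{main}(iii)) produces $\mathbb{F}_g=\mathbb{F}_2$, not $\mathbb{F}_3$. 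So this route cannot yield $n=3$.

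Two small corrections to your third step: the number of contractions is $b_2(X)-b_2(\mathbb{F}_3)=12$, not $13$ (the section is one of the twelve, and the paper lists them as $s,a,b,\ldots,k$ with $l$ surviving); and the multiplicity-one component of V$^*$ hit by the section is not unique---V$^*$ has two such components ($a$ and $k$ in Figure~\ref{V*}), and the paper handles this with a ``without loss of generality''. Finally, Proposition~\ref{prop2} does not apply since $k=g+1-n=0$; the smooth irreducible genus-two member of $|2C_\infty|=|2C_0+6F_0|$ on $\mathbb{F}_3$ comes instead from the boundary case $a>0$, $b=an$ of Proposition~\ref{prop1}(b).
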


\begin{proof}
We will use Theorem 2.2 in \cite{Kit1} which stated as Theorem 2.9 above. Note that $\CP \# 13\CPb$ is a rational surface, $f$ is a relatively minimal fibration of genus $g=2$, and the Picard number 
\begin{equation*}
\rho(\CP \# 13\CPb) = 10-K_{\CP \# 13\CPb}^2 = b_2(\CP \# 13\CPb) = 14
\end{equation*}
(For more discussion on Picard numbers the reader may see e.g. \cite{SaSa}, proof of Theorem 2.8). Therefore, $\rho(\CP \# 13\CPb)$ equals to $4g+6$, since we have $g=2$. Thus, by Theorem~\ref{main1} above, there exists a birational morphism $\mu:  \CP \# 13\CPb  \rightarrow \mathbb{F}_3$. 

By the afore-mentioned theorem, part i), we have $(\mu_*F) = 2C_{\infty}= 2C_0+6F$ where $F$ is the generic fiber of the genus two fibration $f$. Let us denote $(\mu_*F) = 2C_{\infty}= 2C_0+6F$ by $B$.

By Theorem~\ref{main1} once again, $f$ has at least one $(-1)$ section and it intersects a component of multiplicity 1 (for the latter fact, see \cite{Kit1}, proof of Lemma 3.2). Fiber V* has only two components with multiplicity one, $a$ and $k$ as in Figure \ref{V*}. Thus the section $s$ intersects either the component $a$ or $k$ in V*. Without loss of generality, let $s$ intersect the component $a$ as in Figure \ref{V*}. Then from the proof of Lemma 3.2 in \cite{Kit1}, it easily follows that $\mu$ is the birational morphism contracting the section $s$, and components $a$, $b$, $c$, $d$, $e$, $f$, $g$, $h$, $i$, $j$, $k$ in turn (see Figure \ref{V*}). Note that $(\mu_*l)^2 =3$, and $(\mu_*l) = C_{\infty}$ is the $(+3)$ section of $\mathbb{F}_3$. Hence our claim follows.
\end{proof}
 
\begin{figure}
\begin{center}
\scalebox{0.80}{\includegraphics{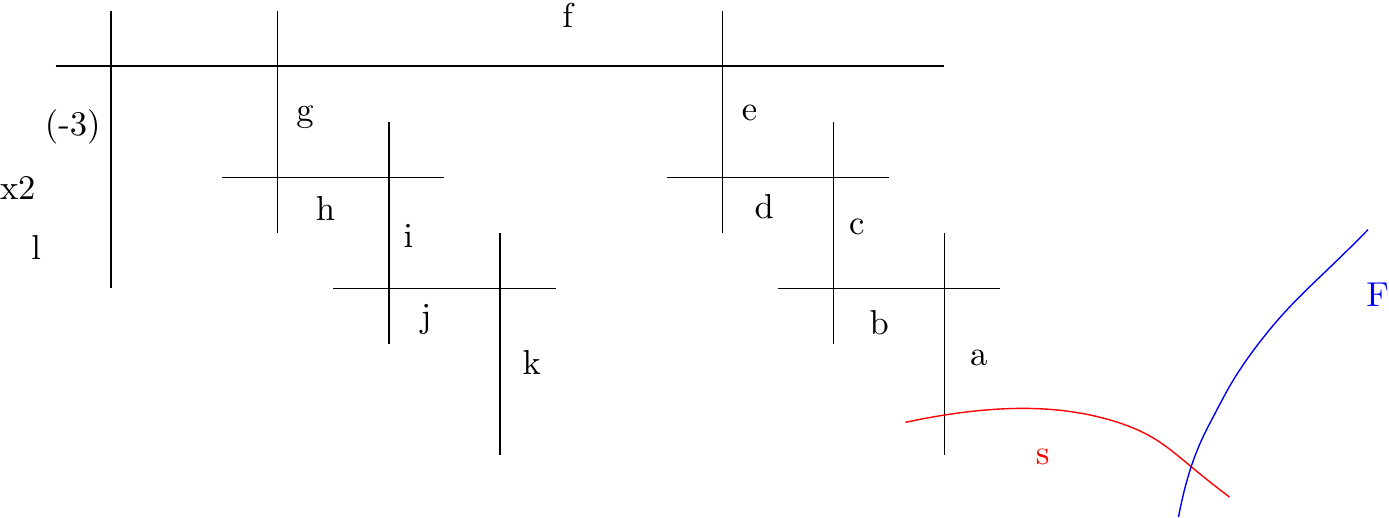}}\caption{V* and a generic fiber}
\label{V*}
\end{center}
\end{figure} 
 
Next, starting from this pencil we will do blow ups. By carefully computing the multiplicities at each step, we will find the homology classes of the components of fiber V*. We note that we will also obtain the multiplicities of each component of V* which match the ones on the list of \cite{NamU}.

\begin{theorem} 
There exists an irreducible symplectic 4-manifold homeomorphic but non-diffeomorphic to $\CP \# 7\CPb$, obtained from the total space of a genus two fibration with two singular fibers of types V and V*, using a combination of the $2$-spherical deformations, symplectic blowups, and the (generalized) rational blowdown surgery.
\end{theorem}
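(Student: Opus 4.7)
The plan is to mimic the strategy of Case 1 in the proof for the (VIII-4, VIII-1) fibration, now starting from the pencil in $\mathbb{F}_3$ provided by the preceding lemma. First I would blow up the base locus of the pencil in $\mathbb{F}_3$ step by step, resolving the indeterminacy exactly as was done in Figure~\ref{eight} for the VIII-4 case. At each blow-up I would track multiplicities and record the homology class of each irreducible component of the fiber V* in terms of $h$, $e_1$, and the successive exceptional divisors $e_2,\ldots,e_{13}$ in $\mathbb{F}_3\#12\CPb\cong\CP\#13\CPb$; the outcome is an explicit list of classes for each of the components of V* (and, complementarily, of V) matching the multiplicities on the Namikawa--Ueno list, with a single distinguished component corresponding to the generic fiber class $\tilde B$ of square zero.

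Next I would apply the $2$-nodal spherical deformation to the singular fiber V. Its monodromy in $\Gamma_2$ is of the form $(a_1a_2a_3a_4a_5)$, so Lemma~\ref{f2} yields a decomposition into two disjoint $2$-nodal spherical fibers (coming from $(a_1a_4)$ and $(a_2a_5)$) plus one Lefschetz-type nodal fiber. Blowing up the four resulting nodes (introducing $e_{14},\ldots,e_{17}$) in $\CP\#17\CPb$, I would symplectically resolve the two proper transforms of the generic fiber class together with the suitable low-multiplicity component of V* lying in V* (analogous to the role of $e_{13}$ in the VIII case). The resulting symplectically embedded sphere $s$ will have class
\begin{equation*}
s = (\tilde B - 2e_{14} - 2e_{15}) + (\tilde B - 2e_{16} - 2e_{17}) + (\text{sphere from V*}),
\end{equation*}
with $s^{2} = -13$. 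Combined with a chain of nine $(-2)$-curves coming from components of V* adjacent to this section, this gives a symplectic embedding of the linear plumbing $C_{11}$ into $\CP\#17\CPb$.

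I would then rationally blow $C_{11}$ down and call the resulting symplectic $4$-manifold $\mathcal{Y}'$. For simple connectivity I would contract the generator of $\pi_{1}(\partial C_{11})$ along a $(-2)$-sphere that meets $C_{11}$ transversely but is not used in its plumbing (a class of the form $e_{i}-e_{j}$ among the exceptional divisors of the intermediate blow-ups, as in Case~1 of Section~\ref{VIII}). A Van Kampen argument, combined with the surjection $\pi_{1}(\partial B_{11})\twoheadrightarrow\pi_{1}(B_{11})$, gives $\pi_{1}(\mathcal{Y}')=1$. Lemma~\ref{rbd} yields $e(\mathcal{Y}')=10$ and $\sigma(\mathcal{Y}')=-6$, so by Freedman $\mathcal{Y}'$ is homeomorphic to $\CP\#7\CPb$.

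Finally I would distinguish $\mathcal{Y}'$ from $\CP\#7\CPb$ smoothly by the symplectic-class argument used in Case~1. Writing $w=ah-\sum b_{i}e_{i}$ for any symplectic class on $\CP\#17\CPb$ and $K=-3h+\sum e_{i}$, I would compute $K|_{C_{11}}=(K\cdot s)\gamma_{1}=11\gamma_{1}$ (the $(-2)$'s being $K$-trivial), restrict $w$ similarly, invert the intersection matrix of $C_{11}$ (the same $10\times10$ matrix used in Case~1), and subtract to obtain $K|_{\mathcal{Y}'}\cdot w|_{\mathcal{Y}'}$ as a positive rational combination of $a$ and $b_{i}$ satisfying $a>\sum b_{i}$. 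By the Li--Liu uniqueness (Theorem quoted after Lemma~\ref{rbd}) the standard symplectic form on $\CP\#7\CPb$ satisfies $K\cdot w<0$, so $\mathcal{Y}'\not\cong_{\mathrm{diff}}\CP\#7\CPb$. Minimality then follows from the Ozsv\'ath--Szab\'o criterion \cite{OzsSz} applied exactly as in Case~1. The main obstacle will be the first step: correctly identifying the classes of the components of V and V* in $\CP\#13\CPb$ and, in particular, pinpointing which multiplicity-one component of V* can serve in the role of $e_{13}$ so that the resolution $s$ has self-intersection $-13$ and meets the correct chain of $(-2)$-curves; once the bookkeeping of exceptional divisors is correct the remaining computations are parallel to the VIII case.
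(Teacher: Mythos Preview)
Your plan is essentially the paper's own argument, and the final computations (the class $s$, the plumbing $C_{11}$, the matrix $M$, and the $K\cdot w$ calculation) are literally identical to Case~1 because $\tilde B$, $K$, and $e_{13}$ have the same expressions in $h,e_1,\dots,e_{17}$ as before. Two small corrections for execution: (i) the paper deforms V not via Lemma~\ref{f2} but by applying Lemma~\ref{f1} to the subword $a_1a_2a_3a_4$ and then appending the leftover $a_5$, which yields the \emph{same} pair of $2$-nodal spheres as in the VIII case (your use of Lemma~\ref{f2} works equally well, since both nodal spheres lie in the fiber class $\tilde B$); (ii) the sphere you resolve with to form $s$ is not a component of V* but the last exceptional divisor $e_{13}$, which is the unique $(-1)$-\emph{section} of the fibration --- it is not part of either singular fiber, and it meets V* precisely at the multiplicity-one $(-2)$-component $e_{12}-e_{13}$, which is what attaches $s$ to the chain of nine $(-2)$-spheres inside V*.
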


\begin{proof}
In this construction, we will work in $\mathbb{F}_3 \cong \mathbb{CP}^2 \# \CPb$. Let us take the $(+3)$ section $C_\infty$ with multiplicity two in $\mathbb{F}_3$, and denote it by $A$. The second component $B$ of our pencil is an irreducible algebraic curve in the linear system $|2C_\infty=2C_0+6F|$, where $C_0$ is the $(-3)$ section and $F$ is the fiber.


Notice that genus of $B=2C_\infty$ is 2 and $2C_\infty \cdot C_\infty =6$. From the previous lemma we have it's one point of multiplicity 6. Hence we represent $A$ and $B$ curves as at the beginning of Figure \ref{five} where $A$ is the black and $B$ is the blue curve. Since $A$ and $B$ represent the same class in homology, they define a Lefschetz pencil with a nonempty base locus. In Figure \ref{five} we denote it by the red point and its multiplicity by $\times 6$.
\begin{figure}
\scalebox{0.70}{\includegraphics{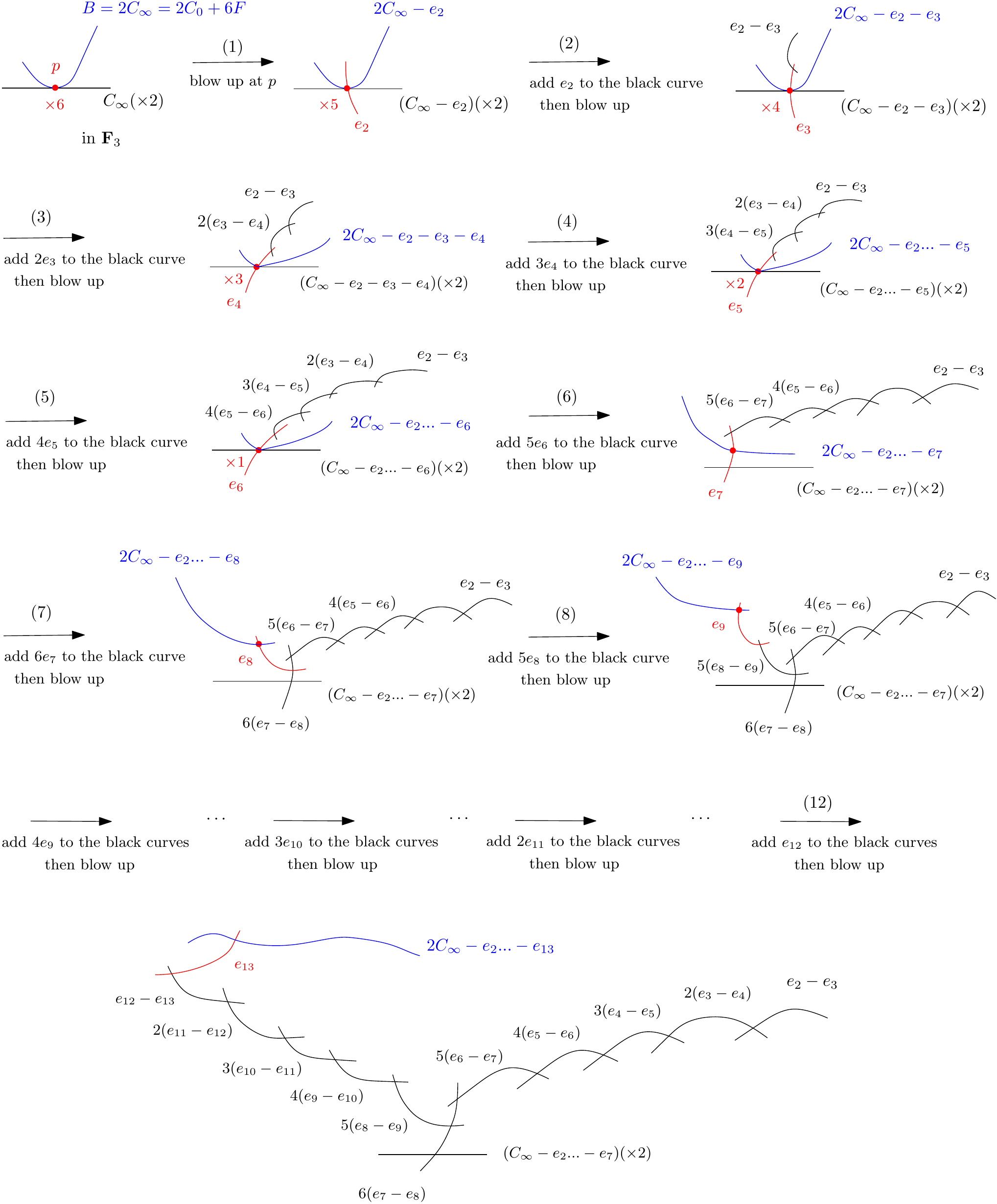}}\caption{Fibers V and V*}
\label{five}
\end{figure} 
We blow up that point and denote the exceptional divisor by $e_2$ in $\mathbb{F}_3 \# \CPb$. We proceed as in the previous constructions. Namely, after each blow up we reset the $A$ and $B$ curves so that we obtain a Lefschetz pencil. Then we blow up the new intersection point of $A$ and $B$. We pursue this process until $A$ and $B$ curves are separated and have equal homology classes. We showed each step in Figure \ref{five} with the homology classes and multiplicities of every irreducible component. At the end, we remark that the total homology class of the first curve (the black part) is $2C_\infty -e_2- \cdots -e_{13}$ which is the same as the homology class of the blue curve. Let us call this blue curve at the end $\tilde B$ for which we have $\tilde B= 2C_\infty -e_2- \cdots -e_{13}= 2(2h-e_1) - e_2 - \cdots -e_{13}=4h-2e_1-e_2- \cdots -e_{13}$, and so it is of self intersection 0. We note that the resulting black curve is the fiber V* in Namikawa-Ueno's list. Hence we attain a configuration as shown in the last step of Figure \ref{five} that is symplectically embedded in $\mathbb{F}_3 \# 12\CPb \cong \CP \# 13\CPb$. 

The fiber V* is the dual of the fiber V and the monodromy of the latter is $a_1a_2a_3a_4a_5$ in $\Gamma_2$ \cite{Ish2}. As we showed above we have:
\begin{equation}
(a_1a_2a_3a_4)a_5 = (a_4^{-1}a_1a_3a_4)(a_4^{-1}a_3^{-1}a_2a_4a_3a_4)a_5
\end{equation}  

so as before we make the same deformation as shown in Figure \ref{cusp}. The resulting two fibers, with $2$ nodes on each, are in $\CP \# 13\CPb$. As in the previous cases, we blow them up twice (cf. Figure \ref{cusp}) and resolve their proper transforms $(\tilde B-2e_{14}-2e_{15}), (\tilde B-2e_{16}-2e_{17})$ with the section $e_{13}$. Again we obtain the class
\begin{eqnarray*}
s&:=& (\tilde B-2e_{14}-2e_{15})+(\tilde B-2e_{16}-2e_{17})+e_{13}\\ 
&=& 8h - 4e_1-2e_2 -2e_3-\cdots-2e_{12}-e_{13}-2e_{14}-\cdots-2e_{17}, 
\end{eqnarray*}
of square $-13$. Note that we used the section $e_{13}$ in Figure \ref{five} to construct $s$. Hence we have that the fiber V* intersects the class $s$ once in $\CP \# 17\CPb$. This gives us the same plumbing $P$ of length ten as in Figure \ref{bdown}, symplectically embedded in $\CP \# 17\CPb$. We rationally blow it down. Moreover, the canonical class of $\mathbb{F}_3$ is $K_{\mathbb{F}_3}= -2C_{\infty}+F$ as we showed in the introduction. So we have 
\begin{equation}
K_{\mathbb{F}_3}= -2C_{\infty}+F = -2(2h-e_1)+(h-e_1)= -3h+e_1.
\end{equation}
Therefore, in $(\mathbb{F}_3 \# 12\CPb) \#4\CPb \cong (\CP \# 13\CPb)\#4\CPb  \cong  \CP \# 17\CPb$ we have
\begin{equation}
K= -3h+e_1+e_2+ \cdots +e_{17}
\end{equation}
and the fiber class is
\begin{equation}
F= 4h-2e_1-e_2- \cdots -e_{13}.
\end{equation} 
We show that the resulting manifold after the rational blow down is an exotic copy of $\CP \# 7\CPb$.  The proof will follow the same lines of computation as in the first case above. 
\end{proof}

\subsection{Small exotic $4$-manifolds from genus two fibrations of type 5(IX-2), and of type (IX-2)-2(IX-4)} 

In what follows, we will make use of the explicit constructions of genus two fibrations on $K3\#2\,\CPb$ given in subsection~\ref{pencilK3} to construct exotic copies of $3\,\CP\#k\,\CPb$ for $k = 16, 17, 18, 19$.  Our first construction will be applied to such a fibration with five singular members of type $IX-2$, and we will employ the operations of symplectic resolution and rational blowdowns along $-4$ spheres. On the other hand, our second construction will be applied to fibrations with one singular fiber of type $IX-2$ and two singular fibers of type $IX-4$, where we will use will use $2$-nodal spherical deformations, symplectic blowups, symplectic resolution, and the (generalized) rational blowdown surgery.

\begin{theorem} 
Let $M$ be one of the following 4-manifolds 
\begin{enumerate}
\item $3\CP \# 16\CPb$ 
\item $3\CP \# 17\CPb$ 
\item $3\CP \# 18\CPb$ 
\item $3\CP \# 19\CPb$ 
\end{enumerate}
Then there exists an irreducible symplectic 4-manifold homeomorphic but non-diffeomorphic to $M$, obtained from the total space of a genus two fibration with five singular fibers of type IX-2, using a combination of symplectic resolution and rational blowdown surgery along $-4$ spheres.
\end{theorem}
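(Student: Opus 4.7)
The plan is to construct exotic symplectic structures on $3\CP\#k\CPb$ for $k=16,17,18,19$ starting from the genus-$2$ fibration $\pi\colon K3\#2\CPb \to \CP^{1}$ with five singular fibers of type IX-2 obtained from Lemma~\ref{pencils} via the pencil of lines through $(1{:}0{:}0)$ in $\CP^{2}$. The key structural observation is that each type IX-2 singular fiber contains (possibly after symplectic resolution of a transverse intersection of a $(-2)$-curve with a neighboring rational component) symplectic spheres of self-intersection $-4$. Since distinct fibers of $\pi$ are disjoint, a collection of such $(-4)$-spheres taken one from each fiber is automatically mutually disjoint, giving access to up to five disjoint symplectic $(-4)$-spheres for the rational blowdown surgery.

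For each $k \in \{2,3,4,5\}$, I would select $k$ mutually disjoint symplectic $(-4)$-spheres from the five singular fibers and symplectically rationally blow each of them down, replacing its tubular neighborhood (the disk bundle $C_{2,1}$ of Euler number $-4$ over $S^{2}$, bounded by the lens space $L(4,3)$) by the Casson-Harer rational homology ball $\mathbb{B}_{2,1}$ with $\pi_{1}(\mathbb{B}_{2,1})=\mathbb{Z}/2$. Call the resulting symplectic $4$-manifold $X_{k}$. By Lemma~\ref{rbd}, $b_{2}^{+}(X_{k})=b_{2}^{+}(K3\#2\CPb)=3$, $b_{2}^{-}(X_{k})=21-k$, $e(X_{k})=26-k$, and $\sigma(X_{k})=-18+k$, matching the Betti numbers of $3\CP\#(21-k)\CPb$. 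Simple-connectedness of $X_{k}$ follows from Van Kampen's theorem: the inclusion $\pi_{1}(\partial \mathbb{B}_{2,1})=\mathbb{Z}/4 \twoheadrightarrow \pi_{1}(\mathbb{B}_{2,1})=\mathbb{Z}/2$ is surjective, and the remaining $\mathbb{Z}/2$ generator is killed in the complement by meridional disks provided by the $(-2)$-curves $E_{i}$ or the $(-1)$ exceptional sections coming from the base-point blowups, each of which meets the corresponding blown-down $(-4)$-sphere once transversely. Freedman's classification then yields $X_{k} \cong_{\mathrm{top}} 3\CP\#(21-k)\CPb$.

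Non-diffeomorphism is established via Seiberg-Witten invariants, which applies since $b_{2}^{+}(X_{k})=3>1$. Because $X_{k}$ is symplectic and (after minimality arguments as in \cite{OzsSz}) minimal, Taubes' theorem yields nontrivial SW basic classes on $\pm K_{X_{k}}$, whereas the rational manifold $3\CP\#(21-k)\CPb$ has vanishing Seiberg-Witten invariants. Alternatively, Park's descent theorem from Section~\ref{rb} can be invoked to explicitly push a basic class of $K3\#2\CPb$ down to a basic class of $X_{k}$: the condition $K\cdot S = r-2 = 2$ on each blown-down $(-4)$-sphere $S$ is automatic from the adjunction formula for symplectic spheres. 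The main obstacle lies in the careful combinatorial bookkeeping within each IX-2 fiber to identify which symplectic resolutions are needed to produce the required disjoint $(-4)$-spheres, and in verifying that enough meridional disks remain in the complement (after all $k$ blowdowns are performed simultaneously) to ensure simple-connectedness uniformly for $k=2,3,4,5$; once these steps are executed, irreducibility of $X_{k}$ follows from symplectic minimality together with the nontriviality of its Seiberg-Witten invariants.
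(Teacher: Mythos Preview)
Your overall strategy is correct and essentially identical to the paper's: start from the genus-$2$ fibration on $K3\#2\CPb$ with five IX-2 fibers, extract one symplectic $(-4)$-sphere from each of $k\in\{2,3,4,5\}$ distinct fibers, rationally blow them down, and then run the Freedman/Seiberg--Witten argument.

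The one place where your proposal is imprecise is the mechanism producing the $(-4)$-spheres. You suggest ``symplectic resolution of a transverse intersection of a $(-2)$-curve with a neighboring rational component.'' That is not how the paper does it, and in fact resolving $E_j$ with one of its neighbors does not straightforwardly yield the needed sphere. In the IX-2 fiber the three rational components $E_j,F_j,G_j$ pass through a common point $x_j$, with $F_j$ \emph{tangent} to $G_j$ there; after the two base-point blowups the curves $F_j,G_j$ acquire self-intersection $-4$ each while their local intersection number at $x_j$ is $2$. The paper symplectically smooths the tangency of $F_j$ and $G_j$ to obtain $S_j=F_j+G_j$, a sphere with $S_j^2=-4-4+2\cdot 2=-4$. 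The $(-2)$-curve $E_j$ is left alone and serves as the transverse dual sphere used to kill $\pi_1$ in the complement. So your instinct that the $E_j$'s provide the meridional disks is right; it is only the description of which resolution creates the $(-4)$-sphere that needs correcting. Once you use $S_j=F_j+G_j$, the ``combinatorial bookkeeping'' obstacle you flag disappears, and the rest of your argument (including the adjunction check $K\cdot S_j=2$ and the minimality/irreducibility step for $k\ge 2$) goes through exactly as in the paper.
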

\begin{proof} We start with the genus two fibration structure on $K3\#2\,\CPb$ with five singular fibers of type IX-2 explained in subsection~\ref{pencilK3} (see Lemma~\ref{pencils} and it's proof). Recall that a singular fiber of type IX-2 is the union of three smooth rational curves $F_j$, $G_j$, $E_j$ (for $1 \leq j \leq 5$) passing through a single point, say $x_{j}$. Moreover, $F_j$ is tangent to $G_j$ at $x_j$, and self-intersections of these rational curves are given as follows: $F_j^{2} = G_j^{2} = -3$, and $E_j^{2} = -2$. For each of these five singular fibers, let us symplectically resolve the intersection point $x_j$ of $F_j$ and $G_j$ (where $1 \leq j \leq 5)$ to obtain a symplectic sphere $S_j = F_j + G_j$ of self-intersection $-4$ in $K3\#2\,\CPb$. Each of these $-4$ spheres $S_j$ has a dual sphere, they are all disjoint from each other. Let $M(i)$ denote the symplectic $4$-manifold gotten from $K3\#2\,\CPb$ by performing the rational blowdown surgery along disjoint $-4$ spheres $S_1$, $\cdots$, $S_i$,  where $1 \leq i  \leq 5$. 

Let us first verify that $M(i)$ is homeomorphic to $3\CP \#(21-i)\CPb$. This is a repated application of Lemma~\ref{rbd} and Freedman's classification theorem (Theorem (1.5) of \cite{F}). For the sake of completeness and clarity, let us spell out the details. Let $P_2$ be a tubular neighborhood of the sphere $S_1$ with self-intersection $-4$ in $K3\#2\,\CPb$. We have $M(1) = (K3 \# 2\CPb - P_{2}) \cup B_{2}$, where $B_{2}$ is a rational homology ball whose boundary is the lens space $L(4,1)$, which also bounds $P_{2}$. We can contract the generator of $\pi_1(\partial P_{2})$ using the sphere dual to $S_1$. Since we have the surjection $\pi_1(\partial B_{2}) \twoheadrightarrow \pi_1(B_{2})$, $M(1)$ is simply connected by Van Kampen's Theorem. By applying the formulas of Lemma~\ref{rbd}, we compute
\begin{eqnarray*}
e (M(1))&=& e(K3 \# 2\CPb) - e(P_{2}) + e(B_{2})\\
&=& 26-2+1\\
&=&25,\\
\sigma (M(1))&=& \sigma (K3 \# 2\CPb) - \sigma (P_{2})\\
&=& -18-(-1)\\
&=&-17.
\end{eqnarray*}
 A repeated application of Van Kampen's theorem shows that $M(i)$ is simply connected, and we have
\begin{eqnarray*}
e (M(i))&=& e(K3 \# 2\CPb) - ie(P_{2}) + ie(B_{2})\\
&=& 26-2i+i\\
&=&26-i,\\
\sigma (M(i))&=& \sigma (K3 \# 2\CPb) - i\sigma (P_{2})\\
&=& -18-(-i)\\
&=&-18+i.
\end{eqnarray*}
$M(i)$ contains curves with an odd self-intersection for any $1 \leq i \leq 5$, so it is a simply connected non-spin $4$-manifold. Thus, we can conclude by Freedman's classification theorem that $M(i)$ is homeomorphic to $3\CP \#(21-i)\CPb$. 

Next, we will verify that $M(i)$ is not diffeomorphic to $3\CP \#(21-i)\CPb$. By the blow up formula for the Seiberg-Witten function, we have $SW_{K3 \# 2\CPb} = SW_{K3} \displaystyle\prod_{k=1}^{2}(e^{e_k} +e^{−e_k}) = (e^{e_1} + e^{-e_1})(e^{e_2} + e^{-e_2})$, where $e_k$ is an exceptional divisor class resulting from the $k$-th blow up (of the base points of genus two pencil) in $K3 \# 2\CPb$. Consequently, the set of basic classes of $K3 \# 2\CPb$ is given by $\pm e_1 \pm e_2$, and the values of Seiberg-Witten invariants on $\pm e_1 \pm e_2$ are $\pm 1$. It is routine to prove that after performing a single rational blowdown operation in $K3 \# 2\CPb$, along the sphere $S_1$, the resulting symplectic $4$-manifold $M(1)$ is diffeomorphic to $K3 \# \CPb$. This manifold has a pair of basic classes $\pm K_{M(1)}$, which descend from the top classes $\pm(e_1 + e_2)$ of $K3 \# 2\CPb$. By applying theorems from section~\ref{rb}, we completely determine the Seiberg-Witten invariants of $M(i)$ using the basic classes and invariants of $K3 \# \CPb$: Up to the sign the symplectic $4$-manifold $M(i)$ has only one basic class which descends from the canonical class of $K3 \# \CPb$. By Taubes theorem \cite{T}, the value of the Seiberg-Witten function on these classes $\pm K_{M(i)}$ evaluates as $\pm 1$. By applying the connected sum theorem for the Seiberg-Witten invariant, Seiberg-Witten function is trivial for $3\CP \#(21-i)\CPb$. Thus, we have shown that $M(i)$ is not diffeomorphic to $3\CP \#(21-i)\CPb$. Using Seiberg-Witten basic classes of $M(i)$, it is easy to verify that $M(i)$ is a minimal symplectic $4$-manifold when $i \geq 2$. This follows from the the fact that for $i \geq 2$, $M(i)$ has no two basic classes $K$ and $K'$ such that $(K - K')^{2} = -4$. Since symplectic minimality implies irreducibility for simply-connected $4$-manifolds with $b_2^{+} > 1$, it follows that $M(i)$ is also smoothly irreducible when $i \geq 2$.
\end{proof}

The following theorem will use the $2$-nodal spherical deformation introduced in Section~\ref{Nodal}. 

\begin{theorem} 
There exists an irreducible symplectic 4-manifold homeomorphic but non-diffeomorphic to $3\CP \# 17\CPb$, obtained from the total space of a genus two fibration with one singular fiber of type IX-2 and two singular fibers of type IX-4, using a combination of the $2$-nodal spherical deformation, symplectic blowups, symplectic resolution, and the rational blowdown surgery along $P_{6}$.
\end{theorem}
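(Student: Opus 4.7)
The plan is to begin with the genus two fibration on $K3\#2\CPb$ provided by Lemma~\ref{pencils} in the case where $f_5=0$ has two multiple roots, so that there is exactly one singular fiber of type IX-2 and two singular fibers of type IX-4. I would first track the pencil of lines through $(1:0:0)$ in $\CP$ and its preimage in the double cover $S$, together with the exceptional divisors $e_1,e_2$ from blowing up the two base points $p, q$, to read off explicit homology classes of the components of all three singular fibers. Then apply a $2$-nodal spherical deformation in the spirit of Lemmas~\ref{f1}--\ref{f2} to the monodromy substring associated with the IX-2 singular fiber, replacing it by two disjoint $2$-nodal spherical fibers in a deformed symplectic genus two fibration whose total space is still $K3\#2\CPb$. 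Next, perform a single symplectic blowup at one node of one of these deformed spherical fibers, introducing an exceptional divisor $e_3$; the ambient manifold becomes $K3\#3\CPb$.

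The heart of the construction is assembling a symplectically embedded $P_6$ plumbing $-8,-2,-2,-2,-2$. I would obtain the leading $(-8)$-sphere by symplectically resolving the proper transform of the blown-up deformed spherical fiber together with the other (unblown) deformed spherical fiber and a suitable $(-1)$-section of the fibration, in exact analogy with the construction of the class $s$ in Section~\ref{VIII}; with the correct bookkeeping of the classes $\tilde B$, $e_3$, and the chosen section, the resulting sphere has self-intersection $-8$. The four $(-2)$-spheres in the linear chain will come from the long arm $E_1'$--$E_2'$--$E_3'$--$E_4'$--$E_5'$ of the $D_7$ configuration in one of the IX-4 fibers, attached to the $(-8)$-sphere via a suitable single-point intersection with the chosen section.

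Once $P_6$ is in place, I would perform the rational blowdown surgery to produce a symplectic $4$-manifold $\mathcal{W}$. A Van Kampen argument using a $(-1)$-sphere (a leftover exceptional divisor or a free branch of the $D_7$ configuration) that is transverse to $\partial P_6$ but not used in $P_6$ gives $\pi_1(\mathcal{W})=1$. By Lemma~\ref{rbd}, $e(\mathcal{W})=26+1-5=22$ and $\sigma(\mathcal{W})=-18-1+5=-14$, so $b_2^+(\mathcal{W})=3,\ b_2^-(\mathcal{W})=17$, and since $\mathcal{W}$ is non-spin, Freedman's theorem yields $\mathcal{W}\cong_{\mathrm{top}}3\CP\#17\CPb$. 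To show $\mathcal{W}$ is not diffeomorphic to the standard $3\CP\#17\CPb$, I invoke Taubes: $\mathcal{W}$ is symplectic with $b_2^+=3\geq 2$, so $\pm K_{\mathcal{W}}$ are Seiberg-Witten basic classes with invariants $\pm 1$, whereas the standard $3\CP\#17\CPb$ has vanishing Seiberg-Witten invariants by the connected-sum vanishing theorem. Minimality of $\mathcal{W}$, and hence irreducibility, follows from an Ozsv\'ath-Szab\'o style basic-class argument as in the previous sections.

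The principal obstacle I expect is the explicit construction of the $(-8)$ sphere together with verification that its intersection pattern with the $D_7$ tail realizes $P_6$ exactly: one must select the right $(-1)$-section and the right four consecutive components of $D_7$ so that adjacent spheres in the plumbing meet transversely in a single point and non-adjacent ones are disjoint, and one must simultaneously arrange that the fifth $D_7$ component and at least one disjoint $(-1)$-sphere remain outside $P_6$ for the $\pi_1$ contraction. A secondary subtlety is verifying, through the descent of Seiberg-Witten basic classes under rational blowdown, that no two basic classes of $\mathcal{W}$ differ by $2e$ for an exceptional class $e$, which is the hypothesis needed in the Ozsv\'ath-Szab\'o minimality criterion.
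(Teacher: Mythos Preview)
Your overall strategy (deform the IX-2 fiber into $2$-nodal spheres, assemble a $P_6$ using pieces of an IX-4 fiber, then rationally blow down and invoke Freedman, Taubes, and the basic-class minimality criterion) matches the paper's architecture, but the heart of your construction---the $(-8)$-sphere---does not work as you describe, and it also differs from what the paper actually does.

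\textbf{The gap.} With a \emph{single} blowup at one node, the proper transform of the first $2$-nodal fiber is an immersed sphere with one remaining node, and the second $2$-nodal fiber still carries two nodes. Resolving these two fibers together with a $(-1)$-section at their two section-intersection points produces a connected \emph{immersed} curve with three transverse self-intersections, not an embedded sphere; smoothing those nodes would raise the genus to~$3$. Moreover, the homology class you obtain is $2\tilde B-2e_3+e_1$, whose square is
\[
(2\tilde B-2e_3+e_1)^2 \;=\; 4\tilde B^2+4e_3^2+e_1^2+4\tilde B\cdot e_1 \;=\; 0-4-1+4 \;=\; -1,
\]
not $-8$. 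So neither the embedding nor the self-intersection is correct. There is also an attachment problem: a section meets an IX-4 fiber only on the $(-3)$-component $F$ (or $G$), which in the $D_7$ tree is joined to $E_6'$ (resp.\ $E_7'$), not to the linear arm $E_1',\dots,E_5'$ you propose for the tail. Without absorbing $F$ (or $G$) into the leading sphere, there is no transverse single-point link between your would-be $(-8)$-sphere and the $D_7$ chain.

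\textbf{What the paper does instead.} The paper blows up \emph{both} nodes of \emph{one} $2$-nodal fiber (introducing $e_3,e_4$), so the proper transform $\tilde B-2e_3-2e_4$ is already an embedded $(-8)$-sphere. It then resolves this sphere with the section $e_1$ \emph{and} with the $(-3)$-component $F_i$ of an IX-4 fiber; the resulting class has square $-8-1-3+2(1+0+1)=-8$. Because $F_i$ is now part of the leading sphere, the adjacent $(-2)$-components of the $D_7$ (the paper uses $E_7,E_5,E_4,E_3$) furnish the linear tail of $P_6$. Thus the key idea you are missing is to incorporate the $(-3)$-component of the IX-4 fiber into the leading sphere, which simultaneously fixes the self-intersection and provides the attachment to the $D_7$ tail.
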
 

\begin{proof} We start with a genus two fibration structure on $K3\#2\,\CPb$ with one singular fiber of type IX-2 and two singular fibers of type IX-4 given in subsection~\ref{pencilK3} (see Lemma~\ref{pencils}). Recall that the singular fiber of type IX-4 consists of $9$ rational curves $E_j$, ($1 \leq j \leq 7$), $F$, $G$. The intersection matrix corresponding to the first $7$ smooth rational curves $E_j$ has type $D_7$. This means that nonzero entries of this matrix are given by $E_1 \cdot  E_2 = E_2\cdot E_3= E_3 \cdot E_4 = E_4 \cdot E_5= E_5 \cdot E_6 = E_5 \cdot E_7= 1$, $E_{j}^{2} = -2$. In this case, $F$ and $G$ are disjoint and each of them meets one componet of $D_7$. We will assume that $F$ meets $E_6$ and $G$ meets $E_7$. Moreover, it is easy to check that two sphere sections $e_1$ and $e_2$, resulting from two blow ups of the base points of the genus two pencil in $K3$ surface, hit only one of the components $F$ and $G$.  Let us assume that $e_1 \cdot F = e_2 \cdot G = 1$.

The monodromy relation corresponding to the above splitting of the singular fibers in $K3\#2\,\CPb$ is given by the following word  
\begin{eqnarray*}
1 = (a_1a_2a_3a_4a_5^2)^{5} &=& (a_1a_2a_3a_4a_5^2) (a_1a_2a_3a_4a_5^2)^{2}(a_1a_2a_3a_4a_5^2)^{2}, 
\end{eqnarray*}
where the monodromy $(a_1a_2a_3a_4a_5^2)$ corresponds to a type IX-2 singular fiber, and each $(a_1a_2a_3a_4a_5^2)^{2}$ corresponds to a type IX-4 singular fiber \cite{Ish, Ish2, NamU}. By Lemma~\ref{f1} the word $(a_1a_2a_3a_4a_5^2)$ can be deformed to contain two disjoint $2$-nodal spherical singular fibers and two nodal fibers in $K3\#2\,\CPb$. We will only use one of these $2$-nodal spherical singular fibers to build a negative-definite plumbing tree for our rational blowdown. Let us blow up two nodes of one of these $2$-nodal spherical singular fibers, say $B$, and symplectically resolve the intersection point of its proper transform $\tilde B-2e_{3}-2e_{4}$ with the sphere section $e_{1}$ and intersection point of the sphere section with $-3$ sphere $F_i$. The resulting symplectic sphere $S$ has self-intersection $-8$. Notice that $S$ together with the spheres $E_7$, $E_5$, $E_4$, $E_3$ forms a negative-definite plumbing tree $P_{6}$ symplectically embedded in $K3\#4\,\CPb$.

Let $M(3,17)$ denote the symplectic $4$-manifold we get from $K3\#4\,\CPb$ by performing the rational blowdown surgery along  $P_{6}$. Similarly as before, we show $M(3,17)$ is simply connected: we contract the generator of $\pi_1(\partial P_{6})$ using the sphere $E_6$ that was not used in the rational blow down plumbing $P_{6}$. Using the formulas, we compute $e, \sigma$ of $M(3,17)$, and by Freedman's classification theorem, we conclude that $M(3,17)$ is homeomorphic to $3\CP \# 17\CPb$. Also, we verified non-triviality of Seiberg-Witten invariants of $M(3,17)$.

\end{proof}

\begin{remark} Our above constructions of exotic $3\CP \# k\CPb$ from genus two fibrations are certainly not optimal. We can also construct two disjoint rational blowdown plumbings $P_{8}$ in $K3\#8\,\CPb$, which yields an exotic copy of $3\CP \# 13\CPb$. Such a construction can be achieved by using the second $-1$ sphere section $e_2$ and the second $2$-nodal spherical singular fiber. By blowing up intersection point of the first $2$-nodal spherical singular fiber with $e_2$, and the second $2$-nodal spherical singular fiber with $e_1$, we can form two disjoint configurations of $P_{8}$ in $K3\#8\,\CPb$. It is also not difficult to construct rational blowdown plumbings of shorter length from genus two singular fibers in Namikawa and Ueno's list and use them to obtain exotic copies of $3\CP \# k\CPb$ for $k = 14, 15, 16, 18, 19$. 
\end{remark}

\end{document}